\theoremstyle{plain}
\newtheorem{theorem}{Theorem}[section]
\newtheorem{lemma}[theorem]{Lemma}
\newtheorem{proposition}{Proposition}[section]
\theoremstyle{remark}
\newtheorem{remark}{Remark}[section]
\newtheorem{definition}[theorem]{Definition}
\begin{document}
	
	\begin{frontmatter}
		\title{Minmax Trend Filtering: Generalizations of Total Variation Denoising via a Pointwise Formula}
		\runtitle{Minmax Trend Filtering}
		
		\begin{aug}
			\author{\fnms{Sabyasachi} \snm{Chatterjee}\ead[label=e2,mark]{sc1706@illinois.edu}}
			
			\address{Department of Statistics,
				University of Illinois at Urbana Champaign,
				\printead{e2}}
		\end{aug}
		
		\begin{abstract}
Total Variation Denoising (TVD) is a fundamental method for denoising and nonparametric regression. In this article, we identify a new exact pointwise representation of the univariate TVD
estimator: each fitted value can be expressed as a minmax/maxmin of a simple function of local
averages over intervals containing the target point. 
This minmax/maxmin formulation is generalizeable and provides a framework for constructing other locally adaptive estimators. Building on this viewpoint, we propose and study higher-order polynomial
generalizations of TVD, defined pointwise as values lying between minmax and maxmin
optimizations of simple functions of local polynomial regressions over intervals at
multiple scales. These estimators, which we call Minmax Trend Filtering (MTF), are
distinct from classical trend filtering and other existing methods in the nonparametric
regression literature. To address computational considerations, we also introduce a
dyadic variant of MTF that restricts attention to a multiscale dyadic family of intervals,
yielding a near-linear-time implementation while preserving the essential statistical
properties. The proposed local formulation of TVD and MTF makes it tractable to bound pointwise
estimation errors in terms of a transparent local bias--variance–type tradeoff. This form
of pointwise analysis is new and simpler than existing analyses of TVD and Trend
Filtering. In addition to minimax rate optimality over bounded variation and piecewise
polynomial classes, our results also yield near-optimal local rates of convergence for locally
H\"older smooth signals. The resulting pointwise risk bounds exhibit a clean dependence on the
tuning parameter $\lambda$ and provide a new insight on why TVD and MTF
exhibit stronger local adaptivity than classical linear smoothers.
\end{abstract}

		\begin{keyword}[class=MSC]
			\kwd[Primary ]{62G05}
			\kwd{62G08}
		\end{keyword}
		
		\begin{keyword}
			\kwd{Locally Adaptive Nonparametric Regression}
			\kwd{Pointwise Bounds}
			\kwd{Local Bias Variance Tradeoff}
			\kwd{Piecewise Polynomial Function Estimation}
			\kwd{Fused Lasso}
			\kwd{Total Variation Denoising}
			\kwd{Trend Filtering}
			\kwd{Minmax Optimization}
			\kwd{Dyadic Interval Search}
			\kwd{Boundary Consistency}
		\end{keyword}
		
	\end{frontmatter}
	
	
\section{Introduction}

\subsection{Nonparametric Regression and Local Adaptivity}

Nonparametric regression is a fundamental problem in statistics; see
\cite{Gyorfi, WassermanNonpar, Tsybakovbook} for general introductions. The goal
is to estimate the regression function
$f(x) = \E(Y \mid X = x)$ from observations $\{(x_i,y_i)\}_{i=1}^n$, under weak
structural assumptions on $f$, such as Lipschitz or H\"older smoothness, or
bounded variation.

In this article, we revisit \emph{total variation denoising} (TVD), a canonical
method for nonparametric regression, and study new univariate regression
procedures that generalize the classical TVD estimator.

Throughout most of the paper, we consider a fixed-design setting in which the
design points lie on a regular grid in $[0,1]$,
\[
x_1 < x_2 < \cdots < x_n, \qquad x_i = \frac{i}{n}.
\]
Letting $\theta_i^* = f(i/n)$, we obtain the signal-plus-noise model
\begin{equation}\label{eq:thetaep}
    y_i = \theta_i^* + \epsilon_i, \qquad i = 1,\dots,n .
\end{equation}
We assume that the noise variables $\epsilon_1,\dots,\epsilon_n$ are independent,
mean-zero, sub-Gaussian with variance proxy bounded by $\sigma^2 > 0$, that is,
\begin{equation}\label{eq:psi2norm}
\sup_{i \in [n]} \E\!\left[e^{t\epsilon_i}\right]
\le e^{\sigma^2 t^2 / 2}
\qquad \text{for all } t \in \R,
\end{equation}
see, for example,~\cite[Section~5.2.3]{vershynin2018high}. Under this model, the
objective is to estimate the unknown signal $\theta^*$, or equivalently the
regression function $f$ at the design points, from the observed data vector $y$.

In many applications, the regression function is not uniformly smooth, and its
local regularity may vary across the domain. This motivates the study of
\emph{locally adaptive} methods, which adapt their estimation accuracy to the
local smoothness of the underlying function.

Although the class of nonparametric regression methods is broad
(see, e.g.,~\cite{cover1967nearest, fan1996framework, wand1994kernel,
de1978practical, green1993nonparametric, wahba1990spline, smola1998learning,
donoho1994ideal, tibshirani2020divided, breiman2017classification,
bishop1994neural}), not all methods are locally adaptive in this sense. In
particular, it is well known
(see, e.g.,~\cite{donoho1998minimax, sadhanala2016total, sadhanala2017higher})
that linear smoothers, including kernel smoothing, local polynomial regression,
and smoothing splines, fail to achieve local adaptivity in a minimax sense.

In contrast, total variation denoising is a nonlinear estimator and is widely regarded as a
locally adaptive method. It can be viewed as the $0$th order case of \emph{trend
filtering} (TF), a class of estimators introduced in the optimization literature;
see~\cite{steidl2006splines, kim2009ell_1}. Trend filtering may be interpreted as
a discrete analogue of locally adaptive regression splines;~\cite{koenker1994quantile, mammen1997locally}. A comprehensive statistical and
computational analysis of TF was provided in~\cite{tibshirani2014adaptive},
where it was argued that TF offers a computationally efficient approximation to
locally adaptive regression splines while retaining their adaptivity properties.
Subsequent work has further investigated TF from various perspectives; see, for
example,~\cite{tibshirani2020divided, guntuboyina2020adaptive,
ortelli2021prediction} and references therein. \emph{In this article, we focus on
trend filtering of order $0$, corresponding to total variation denoising, and
revisit its properties from a new perspective.}

\subsection{Main result for total variation denoising}

Total variation denoising (TVD) is a classical nonparametric regression and
denoising method that has been extensively studied in statistics, applied
mathematics, and signal processing. It was introduced in the seminal work of
\cite{rudin1992nonlinear}, which proposed total variation regularization for
two-dimensional image denoising. In the univariate setting, TVD is also known
as the fused lasso. Given a data vector $y \in \R^n$, the univariate TVD
estimator is defined as
\begin{equation}\label{eq:tvd}
    \hat{\theta}^{(\lambda)}
    = \arg\min_{\theta \in \R^n}
    \frac{1}{2} \sum_{i=1}^n (y_i - \theta_i)^2 + \lambda \, TV(\theta),
\end{equation}
where $TV(\theta) = \sum_{i=1}^{n-1} |\theta_{i+1} - \theta_i|$ and $\lambda \ge 0$
is a tuning parameter.

The total variation penalty promotes sparsity in the first-order differences
of the fitted signal, and consequently the estimator $\hat{\theta}^{(\lambda)}$
is piecewise constant. Moreover, the univariate TVD estimator can be computed
in linear time using specialized algorithms; see, for example,
\cite{johnson2013dynamic, condat2013direct, makovetskii2020tube}.

Despite its favorable properties, including local adaptivity and fast
computation, no closed-form expression for the fitted value
$\hat{\theta}^{(\lambda)}_i$ at a given location $i$ is known. In this article, we
derive an exact pointwise minmax and maxmin representation for the TVD fit.
This representation enables a refined analysis of the estimator, including
the study of local risk behavior, location-dependent rates of convergence
adapted to the local smoothness of the underlying regression function, and
explicit pointwise risk bounds as tractable functions of the tuning parameter
$\lambda$. In turn, these results provide transparent guidance for the choice
of $\lambda$ for each location.

Our main result shows that the TVD fit at a given index can be expressed as a
min--max or max--min over simple functions of local averages of the data. The
following theorem is one of the central results of the paper.

\begin{theorem}[A pointwise formula for TVD]\label{thm:ptwiseformula}
Fix any $i \in [n]$, and let $\mathcal{I}$ denote the collection of all discrete
intervals of $[n]$. For the TVD estimator $\hat{\theta}^{(\lambda)}$ defined
in~\eqref{eq:tvd}, the following identity holds:
\begin{equation}\label{eq:fldefn}
\max_{J \in \mathcal{I}: i \in J}
\min_{I \in \mathcal{I}: I \subseteq J,\, i \in I}
\Bigl[ \bar y_I + C_{I,J} \frac{2\lambda}{|I|} \Bigr]
=
\hat{\theta}^{(\lambda)}_i
=
\min_{J \in \mathcal{I}: i \in J}
\max_{I \in \mathcal{I}: I \subseteq J,\, i \in I}
\Bigl[ \bar y_I - C_{I,J} \frac{2\lambda}{|I|} \Bigr],
\end{equation}
where $\bar y_I = |I|^{-1} \sum_{j \in I} y_j$, and the constants $C_{I,J}$ are
defined in Definition~\ref{def:cij} below.
\end{theorem}

\begin{definition}\label{def:cij}
Fix an interval $J \subseteq [n]$ and a subinterval $I \subseteq J$. Write
$J = [j_1 : j_2]$ and $I = [s : t]$. Throughout, $I \subset J$ denotes that $I$ lies strictly in the interior of $J$, that is, $I$ does not contain either boundary point of $J.$ There are different cases depending on whether $J$ contains none, one or both global boundary points $\{1,n\}$.

If $1 < j_1 \le j_2 < n$, then
\[
C_{I,J} =
\begin{cases}
1, & \text{if } I \subset J, \\
-1, & \text{if } I = J, \\
0, & \text{otherwise}.
\end{cases}
\]

If $1 = j_1 \le j_2 < n$, then
\[
C_{I,J} =
\begin{cases}
1, & \text{if } I \subset J, \\
-1/2, & \text{if } I = J, \\
1/2, & \text{if } 1 = s \le t < j_2, \\
0, & \text{if } 1 < s \le t = j_2 .
\end{cases}
\]

If $1 < j_1 \le j_2 = n$, then
\[
C_{I,J} =
\begin{cases}
1, & \text{if } I \subset J, \\
-1/2, & \text{if } I = J, \\
1/2, & \text{if } j_1 < s \le t = n, \\
0, & \text{if } j_1 = s \le t < n .
\end{cases}
\]

If $J = [1 : n]$, then
\[
C_{I,J} =
\begin{cases}
1, & \text{if } I \subset J, \\
0, & \text{if } I = J, \\
1/2, & \text{otherwise}.
\end{cases}
\]

\end{definition}

The representation in~\eqref{eq:fldefn} holds for all indices $i \in [n]$, all
data vectors $y \in \R^n$, and all tuning parameters $\lambda \ge 0$. Although
the TVD estimator is defined as the solution to a global convex optimization
problem, Theorem~\ref{thm:ptwiseformula} shows that each fitted value
$\hat{\theta}^{(\lambda)}_i$ admits an exact local characterization in terms of
averages of the data over intervals containing $i$. Specifically,
$\hat{\theta}^{(\lambda)}_i$ is obtained by a nested minmax or maxmin
operation over intervals: one first fixes an interval $J$ containing $i$, then
optimizes over all subintervals $I \subseteq J$ that also contain $i$ via
modified local averages of the form
\[
\bar y_I \;\pm\; \frac{2\lambda}{|I|} C_{I,J}.
\]
In Section~\ref{sec:unitvd}, we provide a proof sketch of
Theorem~\ref{thm:ptwiseformula}, which explains the origin of this minmax
structure and the role of the constants $C_{I,J}$.

Univariate total variation denoising admits a classical geometric
\emph{taut string} characterization. One considers the cumulative sums
$Y_0 = 0, Y_k = \sum_{j=1}^k y_j$ and draws the shortest path (pinned at the endpoints)
that remains within a tube of radius $\lambda$ around $\{Y_k\}_{k=0}^n$; the fitted
signal $\hat\theta^{(\lambda)}$ is then obtained by taking discrete differences
of this path. This viewpoint is classical; see, for example,
\cite{mammen1997locally, davies2001local}. While this characterization provides a
global and geometric viewpoint of the TVD solution, it does not by itself yield
an explicit pointwise min--max/max--min representation such as
Theorem~\ref{thm:ptwiseformula}, to the best of our knowledge. 

\subsection{Connection with isotonic regression}

The pointwise min--max characterization of TVD is reminiscent of classical
results for isotonic regression. Isotonic regression~\cite{RWD88} is a
fundamental nonparametric regression method that solves
\[
\min_{\theta_1 \le \cdots \le \theta_n}
\sum_{i=1}^n (y_i - \theta_i)^2 .
\]
Its solution admits the well-known min--max representation
\[
\max_{u \le i} \min_{l \ge i}
\frac{1}{l-u+1} \sum_{j=u}^l y_j
=
\hat{\theta}_i
=
\min_{l \ge i} \max_{u \le i}
\frac{1}{l-u+1} \sum_{j=u}^l y_j .
\]

Many properties of isotonic regression are most transparently derived using
this formula; see, for example, the statistical analysis in~\cite{Zhang02}.
There are other connections between isotonic regression and TVD. The TVD
objective can be written as a generalized lasso problem, while isotonic
regression corresponds to a related nonnegative lasso formulation. Both methods
promote sparsity in first-order differences, yield piecewise constant fits, and
are regarded as locally adaptive. Indeed,~\cite{mammen1997locally} notes that
TVD behaves similarly to isotonic regression in regions where the underlying
signal is monotone for sufficiently large values of $\lambda$.

Despite these similarities, the minmax structure for TVD differs
fundamentally from that of isotonic regression. In isotonic regression, the
minmax/maxmin operations are taken over lower and upper intervals containing the
index $i$, whereas in the TVD representation of
Theorem~\ref{thm:ptwiseformula}, the optimization is over all intervals and their
subintervals containing $i$.

\subsection{Role of the min--max representation}

A central contribution of this article is the use of the minmax/maxmin
representation to analyze pointwise estimation errors of the TVD estimator.
Specifically, the min--max structure leads naturally to a bias--variance--type
decomposition of the local risk; see Theorem~\ref{thm:maingeneral}.

The literature on the statistical accuracy of univariate TVD is extensive; see,
for example,~\cite{mammen1997locally, Harchaoui2010multiple,
dalalyan2017tvd, lin2017sharp, Ortelli2018, ortelli2021prediction,
guntuboyina2020adaptive, madrid2022risk}. However, these works primarily study
global risk measures such as mean squared error. The study of pointwise
estimation error for TVD was initiated more recently in~\cite{zhang2023element},
but the results there are limited to piecewise constant signals with a small
number of segments. In contrast, we derive general pointwise risk bounds for
TVD; see Theorem~\ref{thm:mainada}. These results substantially extend the scope
of~\cite{zhang2023element}; see Section~\ref{sec:previous} for further
discussion. The key technical tool enabling this analysis is the proposed
min--max/max--min representation.

We now briefly sketch how the min--max/max--min formula yields deterministic
pointwise error bounds. Fix an index $i \in [n]$. Using the min--max
representation in~\eqref{eq:fldefn}, for any interval $J$ containing $i$ we have
\begin{align*}
\hat{\theta}^{(\lambda)}_i
&\le
\max_{I \subseteq J: i \in I}
\Bigl[ \bar y_I - C_{I,J}\frac{2\lambda}{|I|} \Bigr] \\
&=
\max_{I \subseteq J: i \in I}
\Bigl[ \bar{\theta}^*_I + \bar{\epsilon}_I - C_{I,J}\frac{2\lambda}{|I|} \Bigr] \\
&\le
\max_{I \subseteq J: i \in I} \bar{\theta}^*_I
+
\max_{I \subseteq J: i \in I}
\Bigl[ \bar{\epsilon}_I - C_{I,J}\frac{2\lambda}{|I|} \Bigr].
\end{align*}
Consequently,
\[
\hat{\theta}^{(\lambda)}_i - \theta^*_i
\le
\underbrace{
\max_{I \subseteq J: i \in I}
\bigl( \bar{\theta}^*_I - \theta^*_i \bigr)
}_{\mathrm{Bias}_{+}(i,J,\theta^*)}
+
\underbrace{
\max_{I \subseteq J: i \in I}
\Bigl[ \bar{\epsilon}_I - C_{I,J}\frac{2\lambda}{|I|} \Bigr]
}_{\mathrm{SD}(i,J,\lambda)} .
\]

Similarly, using the max--min representation in~\eqref{eq:fldefn} yields the
lower bound
\[
\hat{\theta}^{(\lambda)}_i - \theta^*_i
\ge
\underbrace{
\min_{I \subseteq J: i \in I}
\bigl( \bar{\theta}^*_I - \theta^*_i \bigr)
}_{\mathrm{Bias}_{-}(i,J,\theta^*)}
-
\mathrm{SD}(i,J,\lambda) .
\]

Since these inequalities hold for every interval $J$ containing $i$, we obtain
\[
\max_{J \in \mathcal{I}: i \in J}
\bigl( \mathrm{Bias}_{-}(i,J,\theta^*) - \mathrm{SD}(i,J,\lambda) \bigr)
\le
\hat{\theta}^{(\lambda)}_i - \theta^*_i
\le
\min_{J \in \mathcal{I}: i \in J}
\bigl( \mathrm{Bias}_{+}(i,J,\theta^*) + \mathrm{SD}(i,J,\lambda) \bigr),
\]
where the notation $\mathrm{Bias}_{\pm}$ and $\mathrm{SD}$ is suggestive of a
bias--variance decomposition and will be formalized later.

This establishes deterministic upper and lower bounds on the pointwise
estimation error, expressed as an optimal bias--variance--type tradeoff over all
intervals $J$ containing the index $i$. Our main pointwise risk bound,
Theorem~\ref{thm:maingeneral}, is of this form. The remaining analysis consists
of bounding the stochastic term $\mathrm{SD}(i,J,\lambda)$ and optimizing the
tradeoff for specific classes of signals $\theta^*$.

\subsection{New Higher Order Generalizations of Total Variation Denoising}
\label{sec:newgeneral}

In total variation denoising, the regularization term is the $\ell_1$ norm of the
first-order differences,
\[
|D\theta|_1
\quad\text{where}\quad
D\theta = (\theta_2-\theta_1,\dots,\theta_n-\theta_{n-1}).
\]
Trend filtering generalizes TVD by retaining the same penalized least squares
formulation as in~\eqref{eq:tvd} but replacing first-order differences by
$r$th-order differences for a degree $r\ge1$. Specifically, trend filtering (of order $r - 1$) is
defined as
\[
\hat{\theta}^{\mathrm{TF}}_{(r - 1),\lambda}
=
\arg\min_{\theta\in\mathbb{R}^n}
\frac12\sum_{i=1}^n (y_i-\theta_i)^2
+
\lambda\,|D^{(r)}\theta|_1,
\]
where $D^{(r)}$ denotes the $r$th-order discrete difference operator, defined
recursively by $D^{(r)} = D D^{(r-1)}$. Trend filtering of order $r - 1$, defined above, promotes sparsity in the
$r$th-order differences of the fitted signal; for example, when $r=2$, the
resulting estimator is piecewise linear and continuous. For general orders, it is well known that Trend Filtering
produces discrete splines that are piecewise polynomials with regularities at the knots; see, for example,
\cite{tibshirani2020divided, guntuboyina2020adaptive}.

The pointwise minmax/maxmin representation in Theorem~\ref{thm:ptwiseformula}
suggests a different route to higher-order generalization that does not start
from a global penalized least-squares program. The guiding idea is to retain the
same nested interval optimization structure as in \eqref{eq:fldefn}, but to
replace local averages by local polynomial fits of degree $r\ge 0$. This leads to a new class of locally adaptive
estimators, which we call \emph{Minmax Trend Filtering} (MTF).

To formalize this, let $I\subseteq[n]$ be an interval and write $y_I\in\R^{|I|}$
for the restriction of $y$ to $I$. Let $P^{(|I|,r)}$ denote the (unweighted)
least-squares projection matrix onto discrete polynomials of degree at most $r$ on the
grid $\{j/n: j\in I\}$, and let $(P^{(|I|,r)}y_I)_i$ denote the fitted value at
index $i\in I$.

\begin{definition}[Minmax trend filtering of degree $r$]\label{def1:mtf}
Fix $r\ge 0$ and $\lambda\ge 0$. For each $i\in[n]$, define the lower and upper
estimators
\begin{align}
L^{(r,\lambda)}_i
&:=
\max_{J \in \mathcal{I}: i \in J}
\min_{I \in \mathcal{I}: I \subseteq J,\, i \in I}
\Bigl[
(P^{(|I|,r)}y_I)_i + C_{I,J}\frac{2\lambda}{|I|}
\Bigr],
\label{eq:mtf_lower}\\
U^{(r,\lambda)}_i
&:=
\min_{J \in \mathcal{I}: i \in J}
\max_{I \in \mathcal{I}: I \subseteq J,\, i \in I}
\Bigl[
(P^{(|I|,r)}y_I)_i - C_{I,J}\frac{2\lambda}{|I|}
\Bigr],
\label{eq:mtf_upper}
\end{align}

where $C_{I,J}$ is the same as in Definition~\ref{def:cij}.

Then it turns out that $L^{(r,\lambda)}_i \le U^{(r,\lambda)}_i$ for all $i$ and we define the \emph{minmax trend filtering} estimator by
\begin{equation}\label{eq:mtf_def}
\hat{\theta}^{(r,\lambda)}_i
:=
\frac12\Bigl(L^{(r,\lambda)}_i + U^{(r,\lambda)}_i\Bigr),
\qquad i\in[n].
\end{equation}
\end{definition}

\begin{remark}
When $r=0$, the projection $P^{(|I|,0)}$ reduces to averaging, and
\eqref{eq:mtf_lower}--\eqref{eq:mtf_upper} coincide with \eqref{eq:fldefn}; hence
$\hat{\theta}^{(0,\lambda)}$ is the same as the TVD estimator. 
\end{remark}

For $r\ge 1$, the estimator \eqref{eq:mtf_def} is no longer defined via a convex
optimization program. Nevertheless, in Section~\ref{sec:well} we show that the
upper minmax estimator is no smaller than the lower maxmin estimator, i.e., $L^{(r,\lambda)}_i \le U^{(r,\lambda)}_i$ for all $i$, so the
definition is well posed.

Minmax Trend Filtering differs from classical Trend Filtering when
$r\ge1.$ Although not obvious from its formulation, it appears to retain some key features of locally
adaptive estimation. Leveraging the minmax/maxmin formulation, we derive
pointwise bias variance type decompositions for the local risk of MTF, extending
the analysis developed for TVD to arbitrary polynomial degrees. Using this
framework, we show that MTF achieves minimax-optimal global rates comparable to
those of Trend Filtering (Section~\ref{sec:global}). We also give a refined analysis of local rates of convergence that depend explicitly on the
local smoothness of the underlying regression function
(Section~\ref{sec:local}). Such local analyses of classical Trend Filtering seem difficult, due to the lack of a pointwise representation. These results illustrate how the local risk of MTF can adapt to the local smoothness. They also indicate how the local risk varies as a
function of the tuning parameter $\lambda$ and reveal a degree of robustness
to over-smoothing not shared by standard linear smoothers; see Section~\ref{subsec:riskcurves}.

From a computational standpoint,
a direct evaluation of \eqref{eq:mtf_lower}--\eqref{eq:mtf_upper} is expensive.
We therefore introduce a dyadic variant that restricts to intervals of dyadic
lengths, significantly reducing the complexity to $O(n (\log n)^2)$. This variant is efficiently computable and we have developed a R code that is publicly available; see Section~\ref{sec:simu}. Our numerical experiments in Section~\ref{sec:simu} show that this variant can even outperform classical trend filtering for signals with highly heterogeneous local smoothness levels. 

We now summarize the main contributions of this article.

\begin{enumerate}
\item \textbf{New Pointwise Formula for TVD.} We give a pointwise formula for the TVD estimator in terms of minmax or maxmin of a simple function of local averages.
In spite of a long history and substantial literature on analyzing TVD, this pointwise formula appears to be new.

\item \textbf{Well Posedness of the Minmax Formula.}
We recognize that the minmax/maxmin formula for TVD is well posed, significantly generalizable, and gives a new and interesting way to define other locally adaptive estimators.

\item \textbf{Minmax Trend Filtering.} We propose higher degree polynomial generalizations of TVD via the pointwise minmax/maxmin representation developed here. These estimators are in general different from trend filtering of order $r \geq 1$. These estimators appear to be new and combine the strengths of linear and nonlinear smoothers by admitting a pointwise representation and by being locally adaptive.

\item \textbf{Local Risk Analysis for general degrees.}
We give pointwise estimation errors for TVD and Minmax Trend Filtering (of any order $r \geq 1$) which are clearly interpretable as a tradeoff of (local) bias + (local) standard error. We show that the notion of (local) bias and (local) standard error tradeoff developed here is a stronger notion than the existing minimax rate optimality notions of local adaptivity usually cited for trend filtering; see Section~\ref{sec:global}. Additionally, we derive pointwise estimation error bounds for the entire risk function (as a function of $\lambda$) of MTF at a point where the underlying function is locally H\"older smooth with a given smoothness exponent. These local rates of convergence clearly reveal the optimal choice of the tuning parameter $\lambda$ and consequences for undersmoothing/oversmoothing. The proof technique is arguably simpler (than existing proofs for trend filtering); it does not rely on local entropy bounds as in~\cite{guntuboyina2020adaptive} or the notion of interpolating vectors as in~\cite{ortelli2021prediction}.
\end{enumerate}

\subsection{Notation}

For a positive integer $n$, we write $[n] := \{1,2,\dots,n\}$.  
For integers $1 \le a \le b \le n$, we write
\[
[a:b] := \{a,a+1,\dots,b\}.
\]
A (discrete) \emph{interval} of $[n]$ is any set of the form $I = [a:b]$.  
We denote by $|I| = b-a+1$ the cardinality of an interval $I$, and by
$\mathcal{I}$ the collection of all intervals of $[n]$.

For any vector $v \in \mathbb{R}^n$ and any interval $I \in \mathcal{I}$, we write
$v_I \in \mathbb{R}^{|I|}$ for the restriction of $v$ to the coordinates indexed
by $I$.

For two intervals $I,J \in \mathcal{I}$, we write $I \subset J$ to mean that $I$ is
strictly contained in the interior of $J$, i.e., $I$ does not include either
endpoint of $J$.  
We write $I \subseteq J$ when $I$ is an arbitrary subinterval of $J$, possibly
including one or both endpoints.

Throughout the paper, $r \ge 0$ denotes an integer polynomial degree.
We use $C_r$ to denote a generic positive constant depending only on $r$; its
value may change from line to line.

We use the term \emph{interval partition} to refer to a partition of $[n]$ into
contiguous discrete intervals.

\subsection{Outline}

The remainder of the paper is organized as follows. In Section~\ref{sec:unitvd}, we derive and explain the pointwise minmax/maxmin
representation for total variation denoising (TVD), including a proof sketch
highlighting the role of interval averaging and boundary effects. Section~\ref{sec:well} studies the general minmax/maxmin structure underlying our
representation and establishes a general well-posedness result. This observation
is essential for defining new estimators beyond TVD. In Section~\ref{sec:minmaxtf}, we introduce \emph{Minmax Trend Filtering} (MTF),
a family of locally adaptive estimators of arbitrary polynomial degree $r \ge 0$,
defined pointwise via nested interval optimizations. We also discuss computational
variants, including a dyadic symmetric restriction that leads to efficient
implementations. Section~\ref{sec:mainresult} presents our main theoretical result: a simultaneous
pointwise estimation error bound for MTF of general degree. This bound takes the
form of a local bias--variance tradeoff optimized over intervals. In Section~\ref{sec:local}, we investigate the local adaptivity of MTF.
We derive local rates of convergence at a point where the underlying regression
function is locally H\"older smooth and show how these rates depend explicitly on
the tuning parameter $\lambda$. Section~\ref{sec:global} studies global performance under mean squared error loss.
We show that MTF achieves near-minimax optimal global rates over classical function
classes, despite being defined pointwise rather than through a global optimization
problem. In Section~\ref{sec:simu}, we discuss the computational complexity and present numerical experiments illustrating the
finite-sample behavior of MTF and its dyadic variant, and we compare their
performance with classical trend filtering. Finally, Section~\ref{sec:discuss} discusses related work, extensions, and open
questions.

The supplementary material contains proofs of the main results.
In particular, Sections~\ref{sec:ptwiseformulaproof},
\ref{sec:mainthmproof}, and \ref{sec:localrateproofs} provide proofs of the main
theorems, while Sections~\ref{sec:fact1} and \ref{sec:fact2} collect auxiliary
lemmas and technical results.

\section{The minmax/maxmin representation for TVD}
\label{sec:unitvd}

In this section, we first provide a proof sketch of
Theorem~\ref{thm:ptwiseformula}. The goal is to convey the main ideas underlying
the proof and to explain the origin and interpretation of the constants $C_{I,J}$. 

\begin{enumerate}
\item \textbf{Optimality Conditions}

The TVD estimator is the unique solution of a strictly convex optimization
problem with an $\ell_1$ penalty on first differences. Its optimality conditions
imply the existence of a vector $z = (z_0,\ldots,z_n)$ satisfying
\begin{equation}\label{eq:dualconstr}
z_0 = z_n = 0, \qquad
|z_k| \le \lambda \ \text{for all } k, \qquad
z_k = \lambda \operatorname{sign}(\hat\theta_k - \hat\theta_{k+1})
\:\:\text{if}\:\: \hat\theta_{k+1} \neq \hat\theta_k
\end{equation}
together with the identity
\begin{equation}{\label{eq:ident}}
\hat\theta^{(\lambda)}_i = y_i + z_{i-1} - z_i,
\qquad i = 1,\ldots,n.
\end{equation}

The vector $z$ can be identified with the taut string associated with total variation
denoising: it is the cumulative residual process constrained to lie within
$[-\lambda,\lambda]$ and pinned at zero at both endpoints.

\item \textbf{Interval averaging identity.}

Averaging the identity~\eqref{eq:ident} over an arbitrary interval
$I = [a:b] \subseteq [n]$ telescopes the $z$ vector and yields 
\begin{equation}\label{eq:interval_identity_appendix_new}
\overline{\hat\theta^{(\lambda)}_I}
=
\bar y_I
-
\frac{z_b - z_{a-1}}{|I|}.
\end{equation}
Although elementary, this identity is the key driver of our proof. It shows
that the effect of total variation regularization on any local average is
entirely determined by boundary terms of the vector $z$. 

\item \textbf{Maximal Plateau, boundary analysis, and the emergence of
$C_{I,J}$.}

We first establish the upper bound in the min--max representation. Fix any
interval $J = [a:b] \ni i$. We show that
\begin{equation}\label{eq:minmaxineq}
\hat\theta^{(\lambda)}_i
\le
\max_{\substack{I \subseteq J \\ i \in I}}
\Bigl(
\bar y_I - \frac{2\lambda}{|I|} C_{I,J}
\Bigr).
\end{equation}

The key step is to identify a \emph{maximal interval where the fitted value is not below $\hat\theta^{(\lambda)}_i$} (or \emph{plateau})
within $J$. Specifically, let $I = [c:d] \subseteq J$ be the largest subinterval
of $J$ containing $i$ such that
\[
\hat\theta^{(\lambda)}_u \ge \hat\theta^{(\lambda)}_i
\qquad \text{for all } u \in I.
\]
It suffices to show that
\begin{equation}\label{eq:sufficeup}
\hat\theta^{(\lambda)}_i
\le
\bar y_I - \frac{2\lambda}{|I|} C_{I,J},
\end{equation}
which then implies~\eqref{eq:minmaxineq}.

By construction of $I$,
\[
\hat\theta^{(\lambda)}_i
\le
\overline{\hat\theta^{(\lambda)}_I}
=
\bar y_I - \frac{z_d - z_{c-1}}{|I|},
\]
where the equality follows from
\eqref{eq:interval_identity_appendix_new}. The boundary term
$z_d - z_{c-1}$ is controlled using the constraints in
\eqref{eq:dualconstr}. Its magnitude and sign depend on how the endpoints of $I$
interact with those of $J$, as well as on whether $J$ intersects the global
boundaries $\{1,n\}$.

For example, if $J \subseteq [2:(n-1)]$ and $I$ lies strictly in the interior of
$J$, then maximality of $I$ implies $z_d = \lambda$ and
$z_{c-1} = -\lambda$, so that $z_d - z_{c-1} = 2\lambda$, corresponding to
$C_{I,J} = 1$. A complete case-by-case analysis over all configurations of $I$
and $J$ yields the full collection of constants $C_{I,J}$.

\item \textbf{Exact min--max identity via the maximal valley.}

We next show that the upper bound is tight. That is, there exists an interval
$J = [a:b] \ni i$ such that
\begin{equation}\label{eq:minmaxeq}
\hat\theta^{(\lambda)}_i
\ge
\max_{\substack{I \subseteq J \\ i \in I}}
\Bigl(
\bar y_I - \frac{2\lambda}{|I|} C_{I,J}
\Bigr).
\end{equation}

This interval is chosen as a \emph{maximal interval where the fitted value is not above $\hat\theta^{(\lambda)}_i$} (or \emph{valley}). Specifically, let
$J = [a:b]$ be the largest interval containing $i$ such that
\[
\hat\theta^{(\lambda)}_u \le \hat\theta^{(\lambda)}_i
\qquad \text{for all } u \in J.
\]
For any subinterval $I = [c:d] \subseteq J$ containing $i$, we then have
\[
\hat\theta^{(\lambda)}_i
\ge
\overline{\hat\theta^{(\lambda)}_I}
=
\bar y_I - \frac{z_d - z_{c-1}}{|I|},
\]
where the inequality follows from the definition of $J$ and the equality again
from~\eqref{eq:interval_identity_appendix_new}. A case-by-case analysis shows
that
\[
z_{c-1} - z_d \ge -2\lambda\, C_{I,J},
\]
uniformly for all $I \subseteq J$. Importantly, and perhaps surprisingly, the same constants $C_{I,J}$
appear here as in the plateau argument, essentially since both analyses rely on the same constraints on the $z$ vector with reversed sign patterns. This establishes~\eqref{eq:minmaxeq} and completes
the proof of the min--max identity.

In fact, this argument shows that equality in the min--max formula is attained by taking $J^\star$ to be
the maximal valley containing $i$. For this choice, an inner maximizer is given
by the maximal plateau $I^\star\subseteq J^\star$ containing $i$, which
coincides with the constant fitted block containing $i$

\item \textbf{Symmetry of the TVD objective.}

Finally, the TVD objective is invariant under the transformation
$(\theta,y) \mapsto (-\theta,-y)$, implying
$\hat\theta^{(\lambda)}(-y) = -\hat\theta^{(\lambda)}(y)$. The maxmin
representation therefore follows immediately from the minmax identity applied
to $-y$, completing the proof of
Theorem~\ref{thm:ptwiseformula}.
\end{enumerate}

\medskip
\noindent

To summarize, the essential insight is the use of the interval averaging
identity~\eqref{eq:interval_identity_appendix_new} on two maximal intervals
(plateaus and valleys) to derive a \emph{min--max structure} for the fitted
values. The constants $C_{I,J}$ arise as boundary interaction coefficients,
encoding how the global total variation constraint translates into explicit,
interval-dependent corrections of local averages.

%


\subsection{Local Geometry of the TVD Fit}

The proof above reveals that equality in the min--max formula is achieved by
choosing $J^\star$ as the maximal valley containing $i$, and $I^\star$ as the
constant fitted block containing $i$. The fitted value admits the explicit
representation
\[
\hat\theta_i
=
\bar y_{I^\star}
-
\frac{2\lambda}{|I^\star|}\,C_{I^\star,J^\star},
\]
where $C_{I^\star,J^\star}$ depends only on the relative position of $I^\star$
within $J^\star$.

Assuming for simplicity that $I^\star$ and $J^\star$ are interior intervals,
three qualitatively distinct cases arise.

If $I^\star$ is a \emph{local maximum block}, then $C_{I^\star,J^\star}=1$ and
\[
\hat\theta_i
=
\bar y_{I^\star}-\frac{2\lambda}{|I^\star|},
\]
so the fitted value is a \emph{shrunk local average}.

If $I^\star$ is a \emph{local minimum block}, then $I^\star=J^\star$ and
$C_{I^\star,J^\star}=-1$, yielding
\[
\hat\theta_i
=
\bar y_{I^\star}+\frac{2\lambda}{|I^\star|},
\]
a \emph{lifted local average}.

Finally, if $I^\star$ lies in a monotone stretch and is not an extremum block,
then $C_{I^\star,J^\star}=0$ and
\[
\hat\theta_i=\bar y_{I^\star}.
\]

Thus, total variation denoising selectively shrinks towards local averages on extremum
blocks while leaving monotone regions unchanged. The amount of shrinkage scales
inversely with the block length, so smaller blocks are penalized more heavily.
In this sense, the coefficients $C_{I,J}$ can be thought of as controlling the local shrinkage factor;
they take values in $\{0,\pm1\}$ for interior intervals $J$, $\{0,\pm\frac12,1\}$
when $J$ contains one global boundary point and $\{0,\frac12,1\}$ when $J = [n].$
	
\section{Well Posedness}\label{sec:well}
In this section, we explore whether the minmax/maxmin expressions can give rise to more general estimators. \textit{It turns out that the maxmin expression never exceeds the minmax expression much more generally. This is what makes the definition of Minmax Trend Filtering in~\eqref{def1:mtf} well posed}. This is an important observation in the context of this article and we state and prove this in a general form as a separate proposition.

\begin{proposition}[Well Posedness]\label{prop:welldefn}
Fix any $i \in [n].$ Let $\mathcal{S} \subseteq \mathcal{I}$ be any non-empty class of (discrete) intervals of $[n]$ containing $i$ and closed under intersection. For any set function $f: \mathcal{S} \rightarrow \mathbb{R}$ and any non-negative set function $g: \mathcal{S} \rightarrow \mathbb{R}$, we have the following inequality:
\begin{equation}\label{eq:welldfn}
			\max_{J \in \mathcal{S}} \min_{I \in \mathcal{S}: I \subseteq J} \big[f(I)  + c_{I,J}\:g(I)\big] \leq \min_{J \in \mathcal{S}} \max_{I \in \mathcal{S}: I \subseteq J} \big[f(I)  - c_{I,J}\:g(I)\big]
		\end{equation}
        for any constants $c_{I,J}$ satisfying the following conditions:
        \begin{equation}\label{eq:CIJ_condition}
        c_{I,J_1} + c_{I,J_2} \le 0
        \qquad\text{for every } J_1,J_2\in\mathcal{S}\text{ and } I = J_1\cap J_2.
        \end{equation}
\end{proposition}

\begin{proof}[Proof of Proposition~\ref{prop:welldefn}]

For any $J \in \mathcal{S}$, let's define the two quantities
\[
LH(J) := \min_{I \in \mathcal{S}: I \subseteq J} \big[f(I)  + c_{I,J}\:g(I)\big],
\qquad
RH(J) :=  \max_{I \in \mathcal{S}: I \subseteq J} \big[f(I)  - c_{I,J}\:g(I)\big].
\]

To show that~\eqref{eq:welldfn} holds, it is equivalent to show that for any $J_1,J_2 \in \mathcal{S}$,
\begin{equation}\label{eq:req1}
 LH(J_1) \leq RH(J_2).
\end{equation}

The left hand side above is a minimum of a list of numbers (indexed by $I \in \mathcal{S}: I \subseteq J_1$) and the right hand side is a maximum of a list of possibly different numbers (indexed by $I \in \mathcal{S}: I \subseteq J_2$). To show~\eqref{eq:req1}, it suffices to show that one number is common in the two lists of numbers. The main observation is that we can always consider the number corresponding to $J_1 \cap J_2 \in \mathcal{S}$ which is common to both the lists. 

Let $I := J_1 \cap J_2 \in \mathcal{S}$. Since $I \subseteq J_1$ and $I \subseteq J_2$, we may use $I$ as a feasible choice in both $LH(J_1)$ and $RH(J_2)$ to obtain
\begin{align*}
LH(J_1)
&\le f(I) + c_{I,J_1}\,g(I),\\
RH(J_2)
&\ge f(I) - c_{I,J_2}\,g(I).
\end{align*}
By \eqref{eq:CIJ_condition} and $g(I)\ge 0$, we have $c_{I,J_1}\,g(I)\le -c_{I,J_2}\,g(I)$, and hence
\[
f(I) + c_{I,J_1}\,g(I) \le f(I) - c_{I,J_2}\,g(I).
\]
Combining the last three displays yields \eqref{eq:req1}.
\end{proof}

Now we show that the particular constants $C_{I,J}$ in~\eqref{def:cij}, used to define Minmax Trend
Filtering of a general order $r \geq 0$, satisfy the conditions
in~\eqref{eq:CIJ_condition}.

\begin{lemma}\label{lem:cij_condition}
Let $C_{I,J}$ be as in Definition~\ref{def:cij}. For any two intervals
$J_1,J_2\subseteq[n]$ and their intersection $I:=J_1\cap J_2$, (which is again an interval, possibly empty), we have
\begin{equation}\label{eq:cij_submod}
C_{I,J_1}+C_{I,J_2}\le 0,
\end{equation}
with the convention that $C_{\varnothing,J}:=0$.
\end{lemma}

\begin{proof}
If $I=\varnothing$ there is nothing to prove, so assume $I\neq\varnothing$.
Write $J_1=[a:b]$, $J_2=[c:d]$, and hence
\[
I=[\max\{a,c\}:\min\{b,d\}].
\]
There are two mutually exclusive possibilities: either the intervals are nested,
or they are not.

\smallskip\noindent
\textbf{Case 1 (nested intervals):} $J_1\subseteq J_2$ or $J_2\subseteq J_1$.
By symmetry it suffices to treat $J_1\subseteq J_2$, in which case $I=J_1$.
We must show $C_{J_1,J_1}+C_{J_1,J_2}\le 0$.

\smallskip\noindent
\emph{(1a) $J_1$ is an interior interval, i.e.\ $1<a\le b<n$.}
Then Definition~\ref{def:cij} gives $C_{J_1,J_1}=-1$.
Since $C_{J_1,J_2}\le 1$, we obtain $C_{J_1,J_1}+C_{J_1,J_2}\le 0.$

\smallskip\noindent
\emph{(1b) $J_1$ touches exactly one global boundary, i.e.\ $a=1<b<n$ or
$1<a\le b=n$.}
Then Definition~\ref{def:cij} gives $C_{J_1,J_1}=-\tfrac12$.
Depending on $J_2$, we have $C_{J_1,J_2}\in\{\pm \tfrac12\}$, and hence $C_{J_1,J_1}+C_{J_1,J_2}\le 0.$

\smallskip\noindent
\emph{(1c) $J_1=[1:n]$.}
Then Definition~\ref{def:cij} gives $C_{J_1,J_1}=0$, and since $J_1\subseteq J_2$
forces $J_2=[1:n]$, we also have $C_{J_1,J_2}=0$.

\smallskip
Thus \eqref{eq:cij_submod} holds in all nested cases.

\smallskip\noindent
\textbf{Case 2 (non-nested intervals):} neither $J_1\subseteq J_2$ nor
$J_2\subseteq J_1$.
Then $I=J_1\cap J_2$ is a strict subinterval of both $J_1$ and $J_2$. Equivalently,
$I$ touches a boundary point of $J_1$ and a boundary point of $J_2$.

We claim that in this configuration
\begin{equation}\label{eq:proper_intersection_zero}
C_{I,J_1}=C_{I,J_2}=0.
\end{equation}

If both $J_1$ and $J_2$ are strictly interior intervals, this follows immediately
from Definition~\ref{def:cij}. Otherwise, since neither $J_1$ nor $J_2$ can equal
$[1:n]$, without loss of generality the non-nested configuration must be of
the form
\[
J_1=[1:j_1], \qquad J_2=[j_2:n],
\qquad 1<j_2\le j_1<n.
\]
In this situation, the intersection $I=[j_2:j_1]$ does not share
a global boundary point with either $J_1$ or $J_2$. Since the value
$C_{I,J}=\tfrac12$ in Definition~\ref{def:cij} can occur only when $I$ shares a
global boundary with $J$, it follows that $C_{I,J_1}=C_{I,J_2}=0$, proving
\eqref{eq:proper_intersection_zero}.

Therefore \eqref{eq:cij_submod} holds in Case~2 as
well.
\end{proof}

	
\section{Minmax Trend Filtering of General Degree}
\label{sec:minmaxtf}

In this section, we develop higher--degree polynomial generalizations of
univariate total variation denoising (TVD), also known as the fused lasso,
through the pointwise minmax/maxmin representation introduced
in~\eqref{eq:fldefn}. The guiding principle is to retain the same nested
interval optimization structure as in TVD, while replacing local averages by
local polynomial fits of arbitrary degree.

We work throughout in the sequence model and use discrete polynomial sequences.
We begin by introducing the necessary notation.

Fix a nonnegative integer $r \ge 0$. Define the linear subspace of
$n$--dimensional \emph{discrete polynomial vectors} of degree at most $r$ by
\[
\mathcal{P}^{(r)}_n
:=
\Bigl\{
\theta \in \R^n :
(\theta_1,\dots,\theta_n) = (f(1/n),\dots,f(1))
\text{ for some polynomial $f$ of degree at most $r$}
\Bigr\}.
\]

Given an interval $I = [a:b] \subseteq [n]$, define the corresponding subspace of
degree--$r$ discrete polynomials on $I$ by
\[
S^{(I,r)}
:=
\bigl\{
\theta \in \R^{|I|} :
\theta = v_I \text{ for some } v \in \mathcal{P}^{(r)}_n
\bigr\}.
\]

Let $P^{(I,r)} \in \R^{|I| \times |I|}$ denote the orthogonal projection matrix
onto $S^{(I,r)}$. Since the design points are equally spaced, the subspace
$S^{(I,r)}$ depends on the interval $I$ only through its length $|I|$; we
therefore write $P^{(|I|,r)}$ throughout.

We now give a definition of the Minmax Trend Filtering estimator that is slightly
more general than~\eqref{def1:mtf}, allowing for computationally restricted
variants.

\begin{definition}[Minmax Trend Filtering (MTF) of general degree]
\label{defn:minmaxfgen}
Fix a nonnegative integer $r$. For each $i \in [n]$, let
$\mathcal{I}_i \subseteq \mathcal{I}$ be a collection of intervals such that
$i \in I$ for all $I \in \mathcal{I}_i$ and $\mathcal{I}_i$ is closed under
intersection. Given data $y \in \R^n$ and $\lambda \ge 0$, define an estimator
$\hat{\theta}^{(r,\lambda)} \in \R^n$ satisfying, for each $i \in [n]$,
\begin{equation}\label{eq:defn}
\max_{J \in \mathcal{I}_i}\ \min_{I \in \mathcal{I}_i:\, I \subseteq J}
\Bigl[
(P^{(|I|,r)} y_{I})_{i}
+ \frac{2\lambda\, C_{I,J}}{|I|}
\Bigr]
\;\le\;
\hat{\theta}^{(r,\lambda)}_i
\;\le\;
\min_{J \in \mathcal{I}_i}\ \max_{I \in \mathcal{I}_i:\, I \subseteq J}
\Bigl[
(P^{(|I|,r)} y_{I})_{i}
- \frac{2\lambda\, C_{I,J}}{|I|}
\Bigr],
\end{equation}
where $C_{I,J}$ is as in Definition~\ref{def:cij}.
\end{definition}

\begin{remark}
The difference between Definition~\ref{defn:minmaxfgen} and the earlier
definition~\eqref{def1:mtf} is that the interval class $\mathcal{I}_i$ is allowed
to depend on the location $i \in [n]$. This flexibility is required for the
dyadic variant introduced below.
\end{remark}

We refer to Section~\ref{sec:simu} for plots of the resulting estimators. We now
summarize several key aspects of Definition~\ref{defn:minmaxfgen}.

\begin{enumerate}
\item \textbf{Well--posedness.}
For each $i \in [n]$, the lower and upper bounds in~\eqref{eq:defn} are well
defined. This follows directly from Proposition~\ref{prop:welldefn} with
$\mathcal{S}=\mathcal{I}_i$, $f(I) = (P^{(|I|,r)} y_{I})_i$, and
$g(I)=2\lambda/|I|$.

\item \textbf{Non--uniqueness.}
The estimator $\hat{\theta}^{(r,\lambda)}$ is not uniquely determined by
\eqref{eq:defn}: any value between the two bounds is admissible. For
concreteness, we take the midpoint of the bounds as in~\eqref{def1:mtf}. All
theoretical results developed below apply uniformly to any admissible choice.

\item \textbf{Indexing.}
For $I=[a:b]\in\mathcal{I}_i$, the vector
$P^{(|I|,r)}y_I\in\R^{|I|}$ is indexed by the points in $I$. Accordingly,
$(P^{(|I|,r)}y_I)_i$ refers to its $(i-a+1)$--st coordinate.

\item \textbf{Full MTF.}
A canonical choice is $\mathcal{I}_i = \{I \in \mathcal{I}: i \in I\}$, which
yields the \emph{full} MTF estimator defined in~\eqref{def1:mtf}. For $r \ge 1$,
direct evaluation of~\eqref{eq:defn} in this case requires $O(n^5)$ operations
and is therefore computationally infeasible.

\item \textbf{Dyadic symmetric variant.}
To reduce computational cost, we restrict $\mathcal{I}_i$ to a dyadic symmetric
family of intervals. For $h \ge 0$, define
\[
[i \pm h] := [\max\{i-h,1\},\ \min\{i+h,n\}],
\]
and set
\begin{equation}\label{eq:dsmtfclass}
\mathcal{I}_i
:=
\{ \{i\} \} \cup \{ [i \pm 2^j] \subseteq [n] : j \in \mathbb{Z}_{\ge 0} \}.
\end{equation}
This collection is closed under intersection, since it forms a nested chain.
The resulting estimator is computable in $O(n(\log n)^2)$ time; see
Section~\ref{sec:simu}. We refer to this variant as \emph{Dyadic Symmetric Minmax
Trend Filtering} (DSMTF).

All pointwise risk bounds derived later apply to both the full MTF and DSMTF
estimators; also see Remark~\ref{rem:mtfvsdsmtf}.
\end{enumerate}

\section{Pointwise Estimation Error Bound for Minmax Trend Filtering}
\label{sec:mainresult}

In this section, we state our main pointwise estimation error bound for Minmax
Trend Filtering (MTF) of arbitrary degree. The bound holds
\emph{simultaneously} over all locations $i\in[n]$ and \emph{uniformly} over all
tuning parameters $\lambda>0$. It takes the form of an oracle inequality: for
each $i$, the estimation error is controlled by the best (data--independent)
interval $J\in\mathcal I_i$ balancing a local approximation (bias) term and a
$\lambda$--dependent stochastic term.

This result is the main technical input for our local analysis in
Section~\ref{sec:local} and for the global risk bounds derived in
Section~\ref{sec:global}.

\subsection{Local bias functionals}

Fix a signal $\theta^* \in \R^n$ and an integer $r \ge 0$. Let $i \in [n]$ be any
location and let $J \subseteq [n]$ be any interval with $J \in \mathcal{I}_i$.
We define the \emph{local positive} and \emph{local negative} $r$th--order bias
functionals associated with $J$ by
\[
Bias^{(r)}_{+}(i,J,\theta^*)
:=
\max_{I \in \mathcal{I}_i:\, I \subseteq J}
\bigl[(P^{(|I|,r)} \theta^*_{I})_i - \theta^*_i\bigr],
\]
and
\[
Bias^{(r)}_{-}(i,J,\theta^*)
:=
\min_{I \in \mathcal{I}_i:\, I \subseteq J}
\bigl[(P^{(|I|,r)} \theta^*_{I})_i - \theta^*_i\bigr].
\]

If the singleton interval $\{i\}$ belongs to $\mathcal{I}_i$ (as is the case for
both MTF and DSMTF), then
\[
Bias^{(r)}_{+}(i,J,\theta^*) \ge 0,
\qquad
Bias^{(r)}_{-}(i,J,\theta^*) \le 0.
\]

\subsection{Local variance term}

For any $i \in [n]$ and any interval $J \subseteq [n]$ with $J \in \mathcal{I}_i$,
define the local $r$th--order standard deviation term
\[
SD^{(r)}(i,J,\lambda)
=
C_r \sigma \sqrt{\log n}
\left(
\frac{\mathbf{1}(i \notin \{1,n\})}{\sqrt{Dist(i,\partial J)}}
+
\frac{1}{\sqrt{|J|}}
\right)
+
\frac{C_r \sigma^2 \log n}{\lambda}
+
\frac{2\lambda}{|J|}.
\]

Here, for an interval $J=[j_1:j_2]\subseteq[n]$, we denote its boundary points by
$\partial J=\{j_1,j_2\}$ and define
\[
Dist(i,\partial J)
=
\min\{\, i-j_1+1,\; j_2-i+1 \,\}.
\]

The standard deviation term $SD^{(r)}(i,J,\lambda)$ consists of three components.
The first term corresponds to the stochastic fluctuation of a local polynomial
fit on the largest symmetric interval about $i$ contained in $J$. The remaining
two terms capture the explicit dependence on the tuning parameter $\lambda$.
Optimizing these two terms over $\lambda$ yields a quantity of the same order as
the first term. Accordingly, the first term may be viewed as the optimally tuned
local stochastic fluctuation, while the remaining terms quantify the additional
variability induced by the choice of $\lambda$.

\begin{remark}
For Theorem~\ref{thm:maingeneral} to hold, one could equivalently define the local
standard deviation term as the maximum of the three components above. We use
their sum for notational simplicity.
\end{remark}

\subsection{Main simultaneous pointwise bound}

\begin{theorem}[Simultaneous Bias--Variance Tradeoff]
\label{thm:maingeneral}
Fix any degree $r \ge 0$. There exist constants $c>1$ and $C_r>0$ (depending only
on $r$ and $c$) such that, with probability at least $1-n^{-c}$, the following
bound holds for the MTF estimator simultaneously for all $i\in[n]$ and all
$\lambda>0$:
\begin{equation}
\label{eq:thmbound}
\max_{J \in \mathcal{I}_i}
\bigl(
Bias^{(r)}_{-}(i,J,\theta^*)
-
SD^{(r)}(i,J,\lambda)
\bigr)
\;\le\;
\hat{\theta}^{(r,\lambda)}_i - \theta^*_i
\;\le\;
\min_{J \in \mathcal{I}_i}
\bigl(
Bias^{(r)}_{+}(i,J,\theta^*)
+
SD^{(r)}(i,J,\lambda)
\bigr).
\end{equation}
\end{theorem}

\begin{itemize}

\item
The bound in~\eqref{eq:thmbound} provides deterministic upper and lower bounds on
the random estimation error that hold simultaneously over all locations
$i\in[n]$, with high probability. The bounds are uniform over all signals
$\theta^*$ and all $\lambda>0$. Since
$Bias^{(r)}_{+}(i,J,\theta^*)\ge0$ and
$Bias^{(r)}_{-}(i,J,\theta^*)\le0$, the right--hand side controls the positive
deviation of the estimator, while the left--hand side controls the negative
deviation.

\item
The result is nonasymptotic and takes the form of an oracle inequality: the error
is bounded by the optimal (over $J\in\mathcal{I}_i$) tradeoff between a local bias
and a local variance term. In particular, the estimator adapts implicitly to the
unknown local regularity of $\theta^*$.

\item
Because the bound is pointwise, it enables a precise analysis of local rates of
convergence, which we pursue in Section~\ref{sec:local}. Comparable pointwise
bounds are not available for classical trend filtering of general degree, likely
due to the absence of an exact pointwise representation.

\item
An attractive feature of the bound is that the dependence on $\lambda$ appears
only in the final two terms of $SD^{(r)}(i,J,\lambda)$ and is fully explicit. This
makes it possible to analyze the local risk as a function of $\lambda$,
including the effects of under-- and over--smoothing; see
Section~\ref{sec:local}.

\item
Comparable simultaneous pointwise bounds are not known for classical trend
filtering of order $r\ge1$. For TVD ($r=0$), the closest related result is
Theorem~1 of~\cite{zhang2023element}, which applies only to piecewise constant
signals with sufficiently long segments. In contrast, the bound here applies to
a substantially broader class of signals and to all degrees $r\ge0$.

\item
The bound also enables analysis at the boundary points $\{1,n\}$. For
$i\in\{1,n\}$, the leading term in $SD^{(r)}(i,J,\lambda)$ does not involve
$Dist(i,\partial J)$, preventing degeneration at the boundary. Consequently, we
obtain boundary consistency under local smoothness assumptions; see
Section~\ref{sec:local}. To the best of our knowledge, such boundary results are
not available for classical trend filtering.

\item
At a high level, this result provides new insight into the local adaptivity of
TVD/MTF. Local smoothness is captured through the bias--variance tradeoff above,
revealing a sense in which TVD/MTF can be more locally adaptive than canonical
linear smoothers; see Section~\ref{sec:local}. The result also recovers the known
minimax rates for trend filtering discussed in Section~\ref{sec:msebds}.

\end{itemize}
	
\section{Local Rates}\label{sec:local}

In this section, we explore concrete consequences of the simultaneous pointwise error bound
established in Theorem~\ref{thm:maingeneral}. In particular, we investigate the local rate of
convergence of TVD and MTF at points where the true signal $\theta^*$ is locally H\"older smooth.
Throughout this section, we view
\[
\theta^*_i = f^*\!\left(\frac{i}{n}\right)
\]
as evaluations of an underlying function $f^*:[0,1]\to\mathbb{R}$ on the equally spaced grid
$\{i/n:1\le i\le n\}$.

We now formally introduce the H\"older class of functions.

\begin{definition}[H\"older space for functions]\label{def:holder}
Given any subinterval $\mathbf I\subseteq[0,1]$, $\alpha\in[0,1]$, and an integer $r\ge0$, we
define the H\"older space $C^{r,\alpha}(\mathbf I)$ as the class of functions
$f:[0,1]\to\mathbb{R}$ that are $r$ times continuously differentiable on $\mathbf I$ and whose
$r$th derivative is H\"older continuous with exponent $\alpha$, that is,
\begin{equation}\label{eq:Holder}
|f|_{\mathbf I;r,\alpha}
\stackrel{{\rm def.}}{=}
\sup_{x\neq y\in\mathbf I}
\frac{|f^{(r)}(x)-f^{(r)}(y)|}{|x-y|^\alpha}
<\infty.
\end{equation}
We call $|f|_{\mathbf I;r,\alpha}$ the $(r,\alpha)$-H\"older coefficient (or norm) of $f$ on
$\mathbf I$. If~\eqref{eq:Holder} holds for some $\alpha>1$, then necessarily
$|f|_{\mathbf I;r,\alpha}=0$, implying that $f^{(r)}$ is constant and hence that $f$ is a
polynomial of degree $r$ on $\mathbf I$. For notational continuity, we denote this case by
$C^{r,\infty}(\mathbf I)$ and set $|f|_{\mathbf I;r,\infty}=0$.
\end{definition}

We are now ready to state our local risk result. 

\begin{theorem}[Local Risk Bound]\label{thm:mainada}
Fix a degree $r\in\mathbb{N}$ and a function $f^*:[0,1]\to\mathbb{R}$. There exist constants
$c>1$ and $C_r$ (the same as in Theorem~\ref{thm:maingeneral}) such that the following holds with
probability at least $1-n^{-c}$, on the same event as in Theorem~\ref{thm:maingeneral}.
Simultaneously for all quadruples $(i_0,s_0,r_0,\alpha_0)$ with $i_0\in[n]$, $s_0>0$,
$r_0\in\{0,\dots,r\}$, and $\alpha_0\in[0,1]\cup\{\infty\}$ such that
$f^*\in C^{r_0,\alpha_0}([\tfrac{i_0\pm s_0}{n}])$, we have, with $\beta=r_0+\alpha_0$,
\[
\bigl|
\hat{f}^{(r,\lambda)}\!\left(\tfrac{i_0}{n}\right)
-
f^*\!\left(\tfrac{i_0}{n}\right)
\bigr|
\;\le\;
C_r\!\left(\frac{\tilde{\sigma}^2}{\lambda}+\frac{\lambda}{B_n}\right),
\]
where $\hat{f}^{(r,\lambda)}(\tfrac{i_0}{n})=\hat{\theta}^{(r,\lambda)}_{i_0}$ denotes the MTF/DSMTF
estimator defined in~\eqref{defn:minmaxfgen},
\[
B_n
=
\min\!\left\{
\tilde{\sigma}^{2/(2\beta+1)}
|f^*|_{[\frac{i_0\pm s_0}{n}];r_0,\alpha_0}^{-2/(2\beta+1)}
n^{2\beta/(2\beta+1)},
\;
|[i_0\pm s_0]|
\right\},
\qquad
\tilde{\sigma}
=
\sigma\sqrt{\log n}.
\]
\end{theorem}

We briefly sketch the proof of the above result below and highlight why the resulting bias variance
tradeoff differs fundamentally from that of standard linear smoothers. The full proof is given in Section~\ref{sec:localrateproofs}.
\begin{enumerate}
\item \textbf{Pointwise bias--variance reduction.}
The proof starts from the simultaneous pointwise bound in Theorem~5.1, which expresses the
estimation error at a location $i_0$ as the optimal tradeoff, over all intervals $J\ni i_0$,
between a local bias term and a variance term that depends explicitly on the
tuning parameter $\lambda$.

\item \textbf{Standard Bias behavior.}
When $f^*$ is locally H\"older smooth with exponent $\beta=r_0+\alpha_0$, Lemma~\ref{lem:bias} shows that
the local bias over an interval $J$ is of order $(|J|/n)^\beta$, matching the approximation
error of standard local polynomial regression. 

\item \textbf{Different structure of Variance Term.}
The (nonstandard) variance term for MTF consists of three components. The first coincides with the
variance term of standard local polynomial regression, while the dependence on $\lambda$
enters through the remaining two components. As a result, the overall bound initially
contains four terms: two corresponding to the classical bias--variance tradeoff of local
polynomial regression (which are independent of $\lambda$), and two additional
$\lambda$-dependent terms. Optimizing over $J\ni i_0$ using the classical bias variance
tradeoff yields the optimal local scale, after which the $\lambda$-dependent components dominate.
This reduction explains why the final bound involves only two terms and why the resulting
risk curve for MTF differs qualitatively from that of linear smoothers; see
Section~\ref{subsec:riskcurves} for more on this. 
\end{enumerate}

We now discuss several other aspects of the above theorem.

\begin{itemize}

\item
The theorem provides a simultaneous bound on the local estimation error of the TVD and MTF
estimators at points where the underlying signal is locally H\"older smooth. A notable feature
is that the bound holds for all $\lambda > 0$ and depends explicitly on $\lambda$, making
the behavior of the local risk transparent.

\item
If $\theta^*$ is locally H\"older smooth at $i_0$, with $i_0$ lying in the interior of an interval of
positive length, we may take $s_0=O(n)$. Ignoring constants and logarithmic factors, the bound
simplifies to
\begin{equation}\label{eq:bdsimpl}
\bigl|
\hat{f}^{(r,\lambda)}(\tfrac{i_0}{n}) - f^*(\tfrac{i_0}{n})
\bigr|
\;\lesssim\;
\frac{1}{\lambda}
+
\frac{\lambda}{n^{2\beta/(2\beta+1)}}.
\end{equation}

\item
The estimator degree $r$ is user-specified. Once fixed, MTF achieves near-optimal rates for any
local smoothness level $(r_0,\alpha_0)$ with $r_0\le r$. In particular, if
$f^*\in C^{r_0,\alpha_0}([0,1])$ globally, choosing
$\lambda=\tilde{O}(n^{\beta/(2\beta+1)})$ yields the minimax-optimal rate
$\tilde{O}(n^{-\beta/(2\beta+1)})$ up to logarithmic factors.

\item
The case $\alpha_0=\infty$ corresponds to $f^*$ being locally polynomial of degree at most
$r_0$. In this setting $\beta=\infty$, and choosing $\lambda=\tilde{O}(\sqrt{n})$ yields the
near-parametric rate $\tilde{O}(n^{-1/2})$, provided $|[i_0\pm s_0]|=O(n)$. For example, if $r = 2$ and 
$\theta^*$ is either locally constant, linear or quadratic, MTF achieves near-parametric rates.

\item
To the best of our knowledge, even for TVD ($r=0$), local rates under general local H\"older
smoothness assumptions have not previously been established.

\item
The theorem allows for spatially varying smoothness: the local smoothness parameters
$(r_0,\alpha_0)$ may vary with $i_0$.

\item
Optimal tuning of $\lambda$ for the location $i_0$ depends on the local smoothness exponent
$\beta=r_0+\alpha_0$; see~\eqref{eq:lambda_star} below. 

\item
The bound remains valid at the boundary points $i_0\in\{1,n\}$. Consequently, if the signal is
locally H\"older smooth at the boundary, both TVD and MTF are consistent there (with appropriate
tuning). To the best of our knowledge, such boundary consistency results are not available for
classical trend filtering.

\end{itemize}

\subsection{Risk curve comparison with linear smoothers}
\label{subsec:riskcurves}

We now use the simplified local bound in~\eqref{eq:bdsimpl} to explain why MTF can exhibit
stronger \emph{local adaptivity} than canonical linear smoothers when a single global tuning
parameter is used. The key point is not that MTF achieves better locally optimal rates—which coincide with those
of linear smoothers—but that its \emph{risk curve} deteriorates substantially more slowly under
oversmoothing.

Fix an integer degree $r\ge0$ and an exponent $\alpha\in[0,1]\cup\{\infty\}$, and set
$\beta=r+\alpha$. Consider estimating $f^*(x_0)$ at a point $x_0$ where $f^*$ is locally
$C^{r,\alpha}$ on an interval of positive length containing $x_0$. Interpreting the pointwise bound
in~\eqref{eq:bdsimpl} as a rate-level proxy for the local estimation error, we obtain the
following proxy pointwise risk curve for $r$th-order MTF:
\begin{equation}\label{eq:risk_mtf}
R^{\mathrm{MTF}}_n(\lambda)
\;:=\;
\frac{1}{\lambda}
\;+\;
\frac{\lambda}{n^{2\beta/(2\beta+1)}},
\qquad \lambda\in[1,\sqrt{n}].
\end{equation}
The proxy risk is minimized at
\begin{equation}\label{eq:lambda_star}
\lambda^\ast = n^{\beta/(2\beta+1)},
\end{equation}
with minimum value of order $n^{-\beta/(2\beta+1)}$.

For comparison, consider a canonical linear smoother such as local polynomial (LP) regression of
degree $r$ with a box kernel. It is well known that its standard pointwise bias--variance tradeoff has the form
\begin{equation}\label{eq:risk_lp_band}
\Bigl(\frac{|J|}{n}\Bigr)^\beta + \frac{1}{\sqrt{|J|}},
\end{equation}
where $|J|$ denotes the discrete bandwidth length. The optimal bandwidth satisfies
$|J|^\ast\asymp n^{2\beta/(2\beta+1)}$, yielding the same optimal rate
$n^{-\beta/(2\beta+1)}$.

To compare risk curves as functions of a single tuning parameter, we reparametrize the linear
smoother bound by identifying $\lambda$ with a square-root bandwidth,
\begin{equation}\label{eq:reparam}
|J|\asymp\lambda^2.
\end{equation}
Under this correspondence, the proxy risk curve for the linear smoother becomes
\begin{equation}\label{eq:risk_lp}
R^{\mathrm{LP}}_n(\lambda)
\;:=\;
\frac{1}{\lambda}
\;+\;
\Bigl(\frac{\lambda^2}{n}\Bigr)^\beta,
\qquad \lambda\in[1,\sqrt{n}].
\end{equation}
The proxy risks~\eqref{eq:risk_mtf} and~\eqref{eq:risk_lp} share the same minimizer and the same
minimum order, but their behavior away from $\lambda^\ast$ differs qualitatively.

\begin{itemize}

\item
\textbf{Undersmoothing.}
If $\lambda<\lambda^\ast$, the term $1/\lambda$ dominates both proxy risks. Consequently, in the
undersmoothing regime, MTF and linear smoothers behave similarly at the level of rates.

\item
\textbf{Oversmoothing.}
If $\lambda>\lambda^\ast$, the remaining term dominates. In this regime, the proxy risk for MTF
behaves like $\lambda\,n^{-2\beta/(2\beta+1)}$, whereas the proxy risk for a linear smoother
behaves like $(\lambda^2/n)^\beta$. A direct comparison of these two terms yields the following result which we record as a lemma. 

\end{itemize}

\begin{lemma}\label{lem:oversmooth_compare}
Let $\lambda^\ast$ be as defined in~\eqref{eq:lambda_star}. Suppose that $\lambda>\lambda^\ast$.
Then
\[
R^{\mathrm{LP}}_n(\lambda)
\begin{cases}
< R^{\mathrm{MTF}}_n(\lambda), & \text{if } \beta<\tfrac12,\\[4pt]
= R^{\mathrm{MTF}}_n(\lambda), & \text{if } \beta=\tfrac12,\\[4pt]
> R^{\mathrm{MTF}}_n(\lambda), & \text{if } \beta>\tfrac12.
\end{cases}
\]
\end{lemma}

The threshold $\beta=\tfrac12$ thus marks the transition at which the oversmoothing risk of
linear smoothers becomes more severe than that of MTF. Except for extremely rough local smoothness levels (i.e., $\beta<\tfrac12$), MTF is more
robust to oversmoothing in the sense that its risk curve deteriorates more slowly as
regularization increases. 

\begin{remark}[On proxy risks]\label{rem:proxy}
The quantities $R^{\mathrm{MTF}}_n(\lambda)$ and $R^{\mathrm{LP}}_n(\lambda)$ are derived from
upper bounds and should be interpreted as rate-level proxies for the true pointwise risk.
Moreover, for linear smoothers, the proxy curve is actually tight (in order) in a worst-case
sense over H\"older classes $C^{r,\alpha}.$ It is not hard to see that the variance term $\frac{1}{\sqrt{|J|}}$ matches the true stochastic fluctuation. The main issue is whether the bias term $\Bigl(\frac{|J|}{n}\Bigr)^\beta$ is tight. We prove that this is so; see Lemma~\ref{lem:biastight}. There exist functions for which the bias term attains order
$(|J|/n)^\beta.$ Let us fix $\alpha = 1$ for concreteness. Then this follows essentially because it is well known that for a local polynomial estimator of degree $r$ at an interior point $x_0$, assuming
$f \in C^{r,1}$ in a neighborhood of $x_0$, the bias of local polynomial regression (for bandwidth $h$) satisfies
\[
\operatorname{Bias}
=
\frac{f^{(r+1)}(x_0)}{(r+1)!}\, h^{r+1} C_{r,K}
\;+\; o(h^{r+1}),
\]
where the constant $C_{r,K}$ depends only on the kernel $K$ and the
polynomial degree $r$ and is typically non zero. In particular, local polynomial regression eliminates bias
up to order $r$, yielding the rate $\operatorname{Bias} = O(h^{r+1})$;~\citep{FanGijbels1996}. We give a self contained and rigorous proof of this fact adapted to our specific setting in Lemma~\ref{lem:biastight}, for $\alpha = 1$ and $0 \leq r \leq 10$. Combined with the fact that the proxy curve~\eqref{eq:risk_mtf} is an upper bound to the true risk for TVD/MTF, the above discussion actually implies that the
worst-case (over $C^{r,\alpha}$) true risk of MTF degrades more slowly (in rate, in the oversmoothing regime)
than the worst-case true risk of local polynomial regression.
\end{remark}

MTF can be interpreted as a form of local polynomial regression with data driven,
location adaptive bandwidth selection. The above discussion indicates that an advantage of MTF over vanilla local polynomial
regression lies not in improved locally optimal rates, but in greater robustness to
over-regularization. This distinction becomes important when a single global tuning parameter is
used across a signal with spatially varying smoothness.

To illustrate this mechanism, consider a heterogeneous smoothness setting with two local
smoothness indices. Fix $r\ge0$ and $\alpha_1,\alpha_2\in[0,1]\cup\{\infty\}$, define
$\beta_1=r+\alpha_1$ and $\beta_2=r+\alpha_2$, and suppose $\beta_1<\beta_2$ with
$\min\{\beta_1,\beta_2\}\ge\tfrac12$. The locally optimal tuning parameters then satisfy
\[
\lambda_1^\ast \asymp n^{\beta_1/(2\beta_1+1)}
\;<\;
\lambda_2^\ast \asymp n^{\beta_2/(2\beta_2+1)}.
\]
Any fixed global choice of $\lambda$ either undersmooths everywhere or must oversmooth in at
least one region. In the oversmoothing regime, Lemma~\ref{lem:oversmooth_compare} implies that
MTF can incur substantially less local error than a linear smoother, providing a mechanism for
local adaptivity under global tuning.

\subsubsection{Example and simulation evidence}\label{subsec:simulations}

To illustrate the preceding discussion, we specialize to the case $r=0$, so that MTF
coincides with total variation denoising (TVD) and the linear smoother reduces to local
averaging. Consider the function
\[
f_{\text{twohalves}}(x)
= 2(x - 1/2)^2 \mathbf{1}_{\{x > 1/2\}},
\quad x \in [0,1].
\]
This function is constant on $[0,1/2]$, corresponding to $\beta_1=\infty$, and Lipschitz on
$[1/2,1]$, corresponding to $\beta_2=1$. The locally optimal tuning parameters therefore scale
as
\[
\lambda_1^\ast \asymp n^{1/2},
\qquad
\lambda_2^\ast \asymp n^{1/3}.
\]

Choosing $\lambda$ to be optimal for the smoother region (the left half) leads to pronounced
oversmoothing in the rougher region (the right half), where the preceding analysis predicts
that the risk of TVD deteriorates far more gracefully than that of local averaging. Conversely,
choosing $\lambda$ to be optimal for the rougher region leads to undersmoothing in the smoother
region, where TVD is not expected to perform worse than local averaging.

We simulate noisy observations with sample size $n=900$, i.i.d.\ Gaussian noise $N(0,1)$,
signal-to-noise ratio $3$, and $50$ Monte Carlo replications. Figure~\ref{fig:curve_twohalves}
displays the estimated root mean squared error (RMSE) as a function of $\lambda$. Consistent
with the theory, local averaging exhibits a pronounced U-shaped risk curve, while the TVD risk
curve remains comparatively flat beyond its minimizer.

\begin{figure}[!th]
\centering
\includegraphics[scale=0.5]{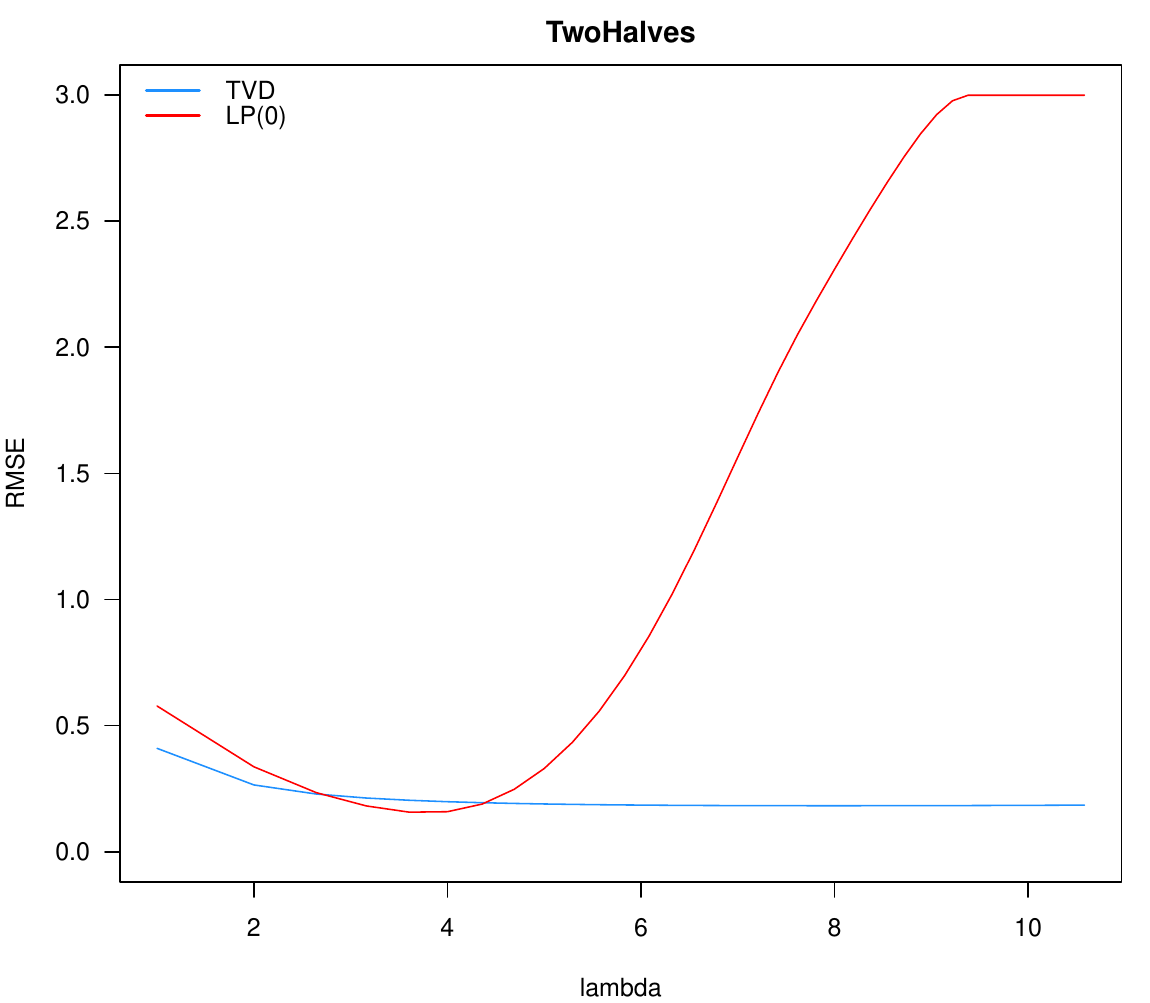}
\caption{Estimated RMSE as a function of $\lambda$ for the \texttt{twohalves} signal,
comparing TVD (blue) and local polynomial regression of order $0$ or local averaging (red).}
\label{fig:curve_twohalves}
\end{figure}

We repeat the same experiment for the standard test signals \texttt{Blocks}, \texttt{Bumps},
\texttt{HeaviSine}, and \texttt{Doppler}, used in Section~\ref{sec:simu}. The results, shown in
Figure~\ref{fig:curve_compare}, exhibit the same qualitative behavior: across signals with
spatially varying smoothness, the risk curve of TVD deteriorates far more slowly under
oversmoothing than that of local averaging.

\begin{figure}[!th]
\centering
\includegraphics[scale=0.6]{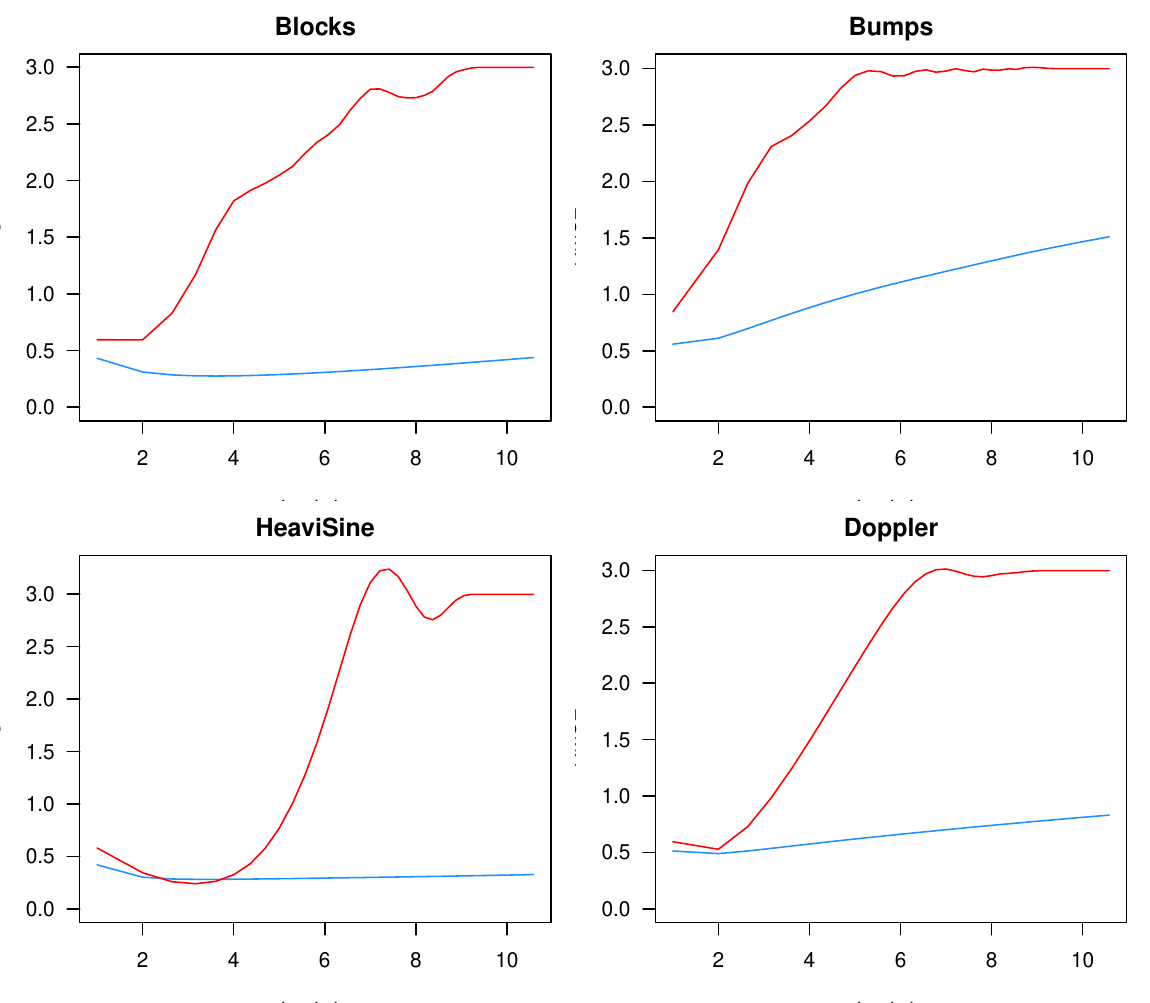}
\caption{Risk curves (RMSE) for TVD and local averaging as a function of $\lambda$ for signals with
spatially varying smoothness: \texttt{Blocks} (top left), \texttt{Bumps} (top right),
\texttt{HeaviSine} (bottom left), and \texttt{Doppler} (bottom right). In all cases, TVD
deteriorates far more gracefully under oversmoothing, consistent with the risk-curve analysis
of Section~\ref{subsec:riskcurves}. 
\label{fig:curve_compare}
}
\end{figure}


\subsubsection{Key takeaway}

To summarize, the local adaptivity of TVD and MTF does not seem to arise from strictly
improved locally optimal rates, which in fact coincide with those achievable by linear
smoothers. Instead, our findings suggest that their adaptivity may be more naturally
interpreted through differences in the geometry of the associated risk curves. In
particular, once oversmoothing occurs, the estimation error of TVD and MTF
increases more slowly than that of linear smoothers. This comparatively greater robustness
to over-regularization may help explain why a single global tuning parameter can still
perform reasonably well across regions with heterogeneous smoothness. This interpretation
is consistent with both our theoretical results and the empirical patterns observed in our
simulations.

This viewpoint differs from classical explanations based on minimax risk analyses under
global loss criteria such as mean squared error, which are often used to motivate why
standard linear smoothers are not locally adaptive whereas trend filtering is
(see, e.g.,~\cite{donoho1998minimax,sadhanala2016total,sadhanala2017higher}). 
\section{Global Rates}\label{sec:msebds}\label{sec:global}

Minmax Trend Filtering (MTF) is defined
pointwise via local minmax/maxmin constructions, in contrast to classical Trend Filtering, which arise as solutions to global convex optimization
problems. As demonstrated in the previous sections, this local definition
renders the analysis of pointwise estimation error particularly transparent.
A natural and nontrivial question is whether such locally defined estimators
retain optimal performance under global loss functions such as mean squared
error (MSE).

In this section, we answer this question in the affirmative. We show that the
simultaneous pointwise oracle inequality established in
Theorem~\ref{thm:maingeneral} is sufficiently sharp to recover near minimax
rate–optimal bounds in global MSE over two canonical function classes:
piecewise polynomial sequences and bounded variation sequences of arbitrary
order. These global guarantees coincide, up to logarithmic factors, with known minimax rates for classical trend filtering. Our contribution is to show
that the same guarantees for TVD/MTF follow directly from a purely pointwise analysis,
without appealing to a global optimization formulation or entropy-based
arguments.

All results in this section are proved for the full MTF estimator. The same
arguments extend directly to the dyadic symmetric variant (DSMTF), we omit these extensions for brevity.

\subsubsection{Fast rates for piecewise polynomial signals}

We begin by establishing fast rates under structural sparsity assumptions.
Throughout, $C_r$ denotes a positive constant depending only on the polynomial
degree $r\ge 0$, whose value may change from line to line. By an
\emph{interval partition} of $[n]$ we mean a partition into contiguous discrete
intervals.

\begin{theorem}[Fast rate for piecewise polynomial signals]\label{thm:fast}
Suppose there exists an interval partition $\pi^*$ of $[n]$ with intervals
$I_1,I_2,\dots,I_k$ such that $\theta^*_{I_j}$ is a discrete polynomial of degree
$r\ge 0$ (in the sense of $\mathcal P_n^{(r)}$ restricted to $I_j$) for each $j=1,\dots,k.$ Assume additionally a minimum length condition
\[
\min_{j\in[k]} |I_j| \;\ge\; c_1\,\frac{n}{k}
\]
for some absolute constant $c_1>0$.

If the tuning parameter is chosen as
\[
\lambda
\;=\;
C_r\Bigl(\frac{n\sigma^2\log n}{k}\Bigr)^{1/2},
\]
then, with probability at least $1-n^{-c}$ (for some $c > 1$) on the same event as in
Theorem~\ref{thm:maingeneral},
\[
\frac{1}{n}\,\|\hat{\theta}^{(r,\lambda)}-\theta^*\|^2
\;\le\;
C_r\,\sigma^2\,\frac{k}{n}\,\log n\,\log\frac{n}{k}.
\]

Moreover, for the $\ell_1$ loss the minimum length condition is not required:
with the same choice of $\lambda$,
\[
\frac{1}{n}\sum_{i=1}^n
\bigl|\hat{\theta}^{(r,\lambda)}_i-\theta^*_i\bigr|
\;\le\;
C_r\,\sigma\,\sqrt{\frac{k\log n}{n}}
\]
holds on the same high–probability event.
\end{theorem}

\begin{remark}
Theorem~\ref{thm:fast} is reminiscent of the fast rates attained by ideally
tuned trend filtering for discrete splines under a minimum length condition;
see, for example, \cite{ortelli2021prediction,guntuboyina2020adaptive}. Our proof relies exclusively on the pointwise
oracle inequality of Theorem~\ref{thm:maingeneral}, rather than on global convex
analysis or entropy bounds.
\end{remark}

\begin{remark}
Our argument applies uniformly to MTF of all polynomial degrees $r\ge 0$. By contrast, fast rates for penalized trend filtering have only been established
for $r\le 4$ to date \cite{ortelli2021prediction}.
\end{remark}

\begin{remark}
     To the best of our knowledge,
the fast-rate statement under $\ell_1$ loss without a minimum-length condition is new and has not
been stated explicitly even for TVD. It is not known whether such a result holds for Trend Filtering of general orders.  
\end{remark}

\begin{remark}
Classical trend filtering produces discrete splines, effectively imposing continuity or
higher--order smoothness constraints at the estimated knots. In contrast,
Theorem~\ref{thm:fast} does not rely on such regularity assumptions: the underlying signal is
allowed to be discontinuous and piecewise polynomial. In these settings, standard
consistency guarantees for trend filtering are not expected to hold, whereas MTF is still able to attain fast rates under the conditions of the
theorem. This distinction highlights a potential advantage of the minmax construction in
such regimes and is supported by the numerical results in Section~\ref{sec:simu}.
\end{remark}

\begin{remark}
The factors $\log(n/k)$ in the MSE bound and $\sqrt{\log n}$ in the $\ell_1$ bound
are likely artifacts of the proof technique. We do not pursue their optimality
here, as our primary goal is to establish rate–level optimality and to
demonstrate that global guarantees follow from local analysis.
\end{remark}

\subsubsection{Slow rates for bounded variation signals}

We now turn to global rates over bounded variation classes. For $\theta\in\mathbb
R^n$, recall the discrete difference operators $D^{(0)}(\theta)=\theta$,
$D^{(1)}(\theta)=(\theta_2-\theta_1,\dots,\theta_n-\theta_{n-1})$, and recursively
$D^{(r)}(\theta)=D^{(1)}(D^{(r-1)}(\theta))$ for $r\ge 2$. We define the $r$th–order
total variation by
\[
\TV^{(r)}(\theta)
\;=\;
n^{\,r-1}\,\|D^{(r)}(\theta)\|_1.
\]

\begin{remark}[Normalization]
The factor $n^{r-1}$ follows the convention of
\cite{guntuboyina2020adaptive}. When $\theta_i=f(i/n)$ for a sufficiently smooth
function $f$, $\TV^{(r)}(\theta)$ corresponds to a Riemann approximation of
$\int_0^1 |f^{(r)}(t)|\,dt$, and is therefore $O(1)$ for typical signals.
\end{remark}

\begin{theorem}[Slow rate for bounded variation signals]\label{thm:slow}
Fix an integer $r\ge 1$ and let $V=\TV^{(r)}(\theta^*)$. If
\[
\lambda
\;=\;
C_r\,
n^{r/(2r+1)}\,
V^{-1/(2r+1)}\,
\sigma^{1+1/(2r+1)}\,
(\log n)^{1/2+1/(2r+1)},
\]
then, with probability at least $1-n^{-c}$ on the same event as in
Theorem~\ref{thm:maingeneral},
\[
\frac{1}{n}\,
\|\hat{\theta}^{(r-1,\lambda)}-\theta^*\|^2
\;\le\;
C_r\,
n^{-2r/(2r+1)}\,
V^{2/(2r+1)}\,
\bigl(\sigma^2(\log n)^2\bigr)^{2r/(2r+1)}.
\]
\end{theorem}

\begin{remark}
Theorem~\ref{thm:slow} shows that Minmax Trend Filtering of order $r-1$ is near
minimax rate optimal over $r$th–order bounded variation classes, with the correct
dependence on $V$ and $n$ up to logarithmic factors. The proof proceeds by
approximating bounded variation sequences by piecewise polynomial sequences and
then invoking the pointwise oracle inequality; see
Proposition~\ref{prop:piecewise}.
\end{remark}

\begin{remark}
Theorems~\ref{thm:fast} and~\ref{thm:slow} together show that Minmax Trend
Filtering achieves near minimax–optimal global rates over the two canonical
signal classes used to justify the adaptivity of trend filtering. Importantly,
these results are obtained as direct consequences of the pointwise bound in
Theorem~\ref{thm:maingeneral}. This illustrates that the local bias–variance
tradeoff developed in this paper is a unified framework from which both local and global
optimality (up to log factors) follow. 
\end{remark}

\section{Computation and Simulations}\label{sec:simu}

The dyadic symmetric min--max trend filtering (DSMTF) estimator can be computed
efficiently in near-linear $O(n (\log n)^2)$ time for fixed polynomial degree $r$ and a fixed grid
of tuning parameters.
Crucially, DSMTF evaluates projections only over the dyadic family
$\mathcal I_i$ at each location $i \in [n]$, where $|\mathcal I_i| = O(\log n)$.
A naive implementation that precomputes projection quantities for all intervals
$I \subseteq [n]$ would incur $O(n^2)$ time and memory costs.
This is unnecessary for DSMTF, since only $O(n \log n)$ dyadic intervals are ever
queried.
We now describe an implementation that exploits this structure.

The computation proceeds in two stages: a preprocessing step independent of the
tuning parameter $\lambda$, followed by pointwise aggregation.

\begin{itemize}

\item \textbf{Precomputation.}

We precompute prefix sums that allow us to evaluate local polynomial projections
for any interval $I = [a:b]$ using constant-time access (up to
$r$-dependent matrix algebra), without enumerating all $O(n^2)$ intervals.
Let $B^{(r,I)}$ denote the discrete polynomial basis matrix of degree $r$
and size $|I| \times (r+1)$ associated with an interval $I$.
By standard least squares theory,
\begin{equation*}
(P^{(|I|,r)} y_I)_i
=
\left(1, \frac{i}{n}, \ldots, \left(\frac{i}{n}\right)^r \right)^{\top}
\bigl((B^{(r,I)})^{\top} B^{(r,I)}\bigr)^{-1}
(B^{(r,I)})^{\top} y_I .
\end{equation*}

\begin{enumerate}
\item
Let $u_t \in \mathbb R^{r+1}$ denote the polynomial basis vector evaluated at the
design point $x_t = t/n$, that is,
\[
u_t := (1, x_t, x_t^2, \ldots, x_t^r)^{\top}.
\]
For any interval $I = [a:b] \subseteq [n]$, the associated Gram matrix is
\[
G_I := (B^{(r,I)})^{\top} B^{(r,I)} = \sum_{t=a}^b u_t u_t^{\top}.
\]
We form matrix-valued prefix sums
\[
S_b := \sum_{t=1}^b u_t u_t^{\top}, \qquad b = 1, \ldots, n,
\]
with the convention $S_0 = 0$.
Then, for any interval $I = [a:b]$,
\[
G_I = S_b - S_{a-1}.
\]
Constructing the prefix sums $\{S_b\}_{b=1}^n$ requires $O(n r^2)$ arithmetic
operations.

\item
Similarly, we form vector-valued prefix sums
\[
S'_b := \sum_{t=1}^b u_t y_t, \qquad b = 1, \ldots, n,
\]
with $S'_0 = 0$, so that for any interval $I = [a:b]$,
\[
H_I := (B^{(r,I)})^{\top} y_I = S'_b - S'_{a-1}.
\]
Constructing $\{S'_b\}_{b=1}^n$ requires $O(n r)$ operations.

\item
For DSMTF, we only require inverse Gram matrices $G_I^{-1}$ for dyadic intervals
$I \in \mathcal I_i$ across all $i \in [n]$.
The total number of distinct dyadic intervals queried is at most $O(n \log n)$.
Each inverse is computed by directly solving a $(r+1)\times(r+1)$ linear system
(with a small ridge regularization for numerical stability), which costs
$O(r^3)$ operations.
Consequently, the total cost of computing all required inverse Gram matrices is
$O(r^3 n \log n)$.
\end{enumerate}

\item \textbf{Pointwise aggregation.}

We now compute the DSMTF estimate at each location $i \in [n]$.

\begin{enumerate}
\item
For each interval $I \in \mathcal I_i$ containing $i$, we evaluate the local
polynomial projection at $x_i = i/n$,
\[
(P^{(|I|,r)} y_I)_i
=
\left(1, \frac{i}{n}, \ldots, \left(\frac{i}{n}\right)^r \right)^{\top}
G_I^{-1} H_I .
\]
Since $|\mathcal I_i| = O(\log n)$ and each evaluation involves matrix--vector
products of dimension $(r+1)$, this step requires $O(r^2 \log n)$ operations per
location.

\item
Using these values, we form an array of at most
$O(\log n) \times O(\log n)$ quantities of the form
\[
(P^{(|I|,r)} y_I)_i \pm \frac{\lambda C_{I,J}}{|I|},
\]
and compute the associated min--max and max--min values.
This step requires at most $O((\log n)^2)$ operations per location for each value
of $\lambda$.
\end{enumerate}

\end{itemize}

\medskip

Combining the above steps, the total computational complexity of DSMTF is
\[
O\!\left(
n r^2
\;+\;
r^3 n \log n
\;+\;
n r^2 \log n
\;+\;
n (\log n)^2 |\Lambda|
\right),
\]
where $|\Lambda|$ denotes the number of candidate values of the tuning parameter
$\lambda$.
For fixed $r$ and fixed $|\Lambda|$, this yields an overall runtime of order
$O\!\bigl(n (\log n)^2\bigr)$, up to constant factors.

\subsection{Empirical Comparisons with Trend Filtering}

We compare Minmax Trend Filtering (MTF) with Trend Filtering (TF) on the four
benchmark test functions introduced in \cite{donoho1994ideal}, namely
\texttt{Blocks}, \texttt{Bumps}, \texttt{HeaviSine}, and \texttt{Doppler}. These
functions exhibit highly heterogeneous local smoothness and have become
standard test beds for evaluating locally adaptive nonparametric regression
methods; see also \cite{tibshirani2014adaptive,donoho1995adapting,mammen1997locally}.
Trend Filtering (with cross-validation) is computed using the \texttt{genlasso}
R package, while MTF is implemented using our own R code. Specifically, we
provide an R package implementing dyadic symmetric minmax trend filtering
(DSMTF), supporting estimation over a prescribed grid of tuning parameters as
well as cross-validated tuning, which can be installed via
\texttt{remotes::install\_github("sabyasachic/dsmtf")} and is publicly available
at \url{https://github.com/sabyasachic/dsmtf}.

Our code takes as input the polynomial degree $r$ and a finite grid of candidate
tuning parameter values $\Lambda$ for cross-validation. For each of the four
benchmark test functions, the grid $\Lambda$ was chosen manually, based on
preliminary simulations in which we examined the typical range of cross-validated
optimal tuning parameter values. The final grid was then fixed in advance and used
uniformly across all simulation replicates for that function. In contrast, the R
package for trend filtering automatically searches over all tuning parameters, leveraging the piecewise linear structure of the solution path;
see~\cite{arnold2014glmgen}.

\begin{figure}[!th]
\centering
\includegraphics[scale = 0.4]{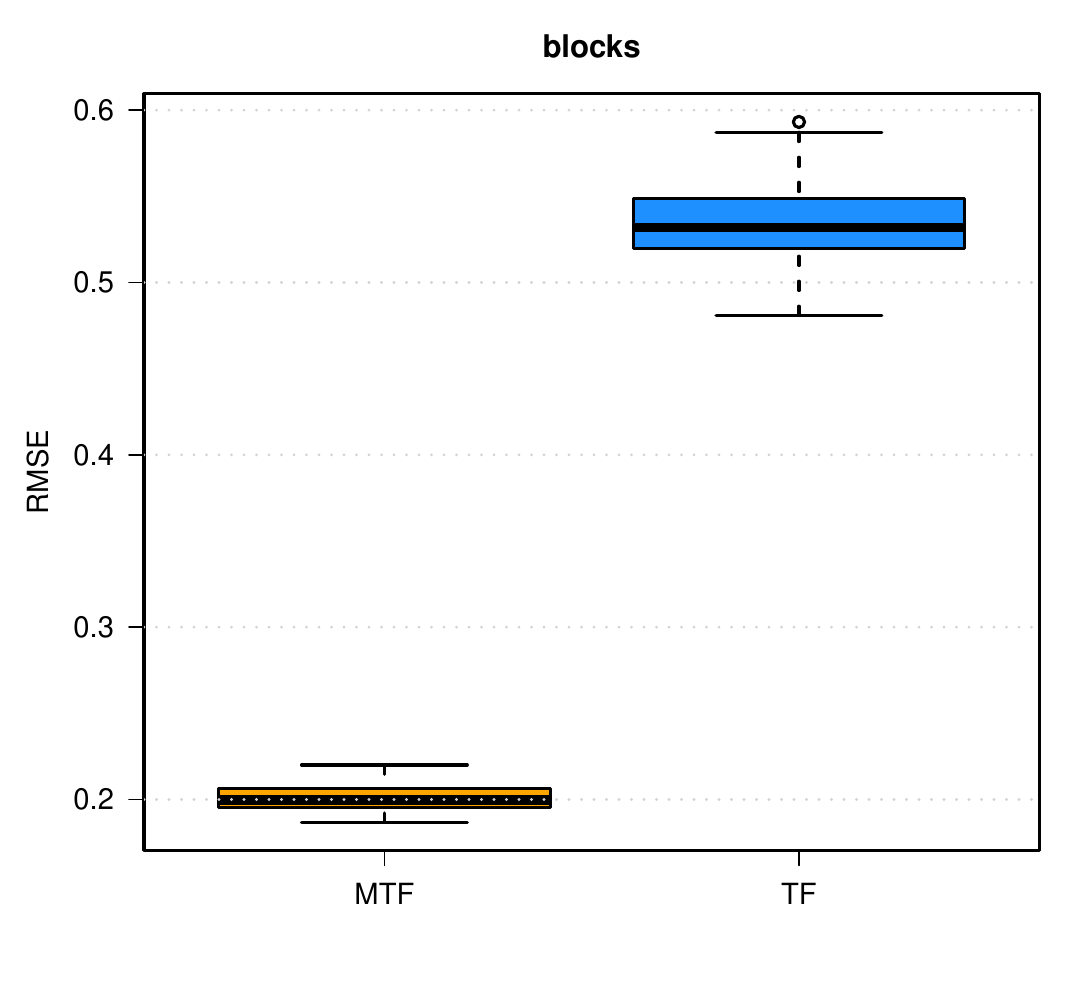}
\includegraphics[scale = 0.4]{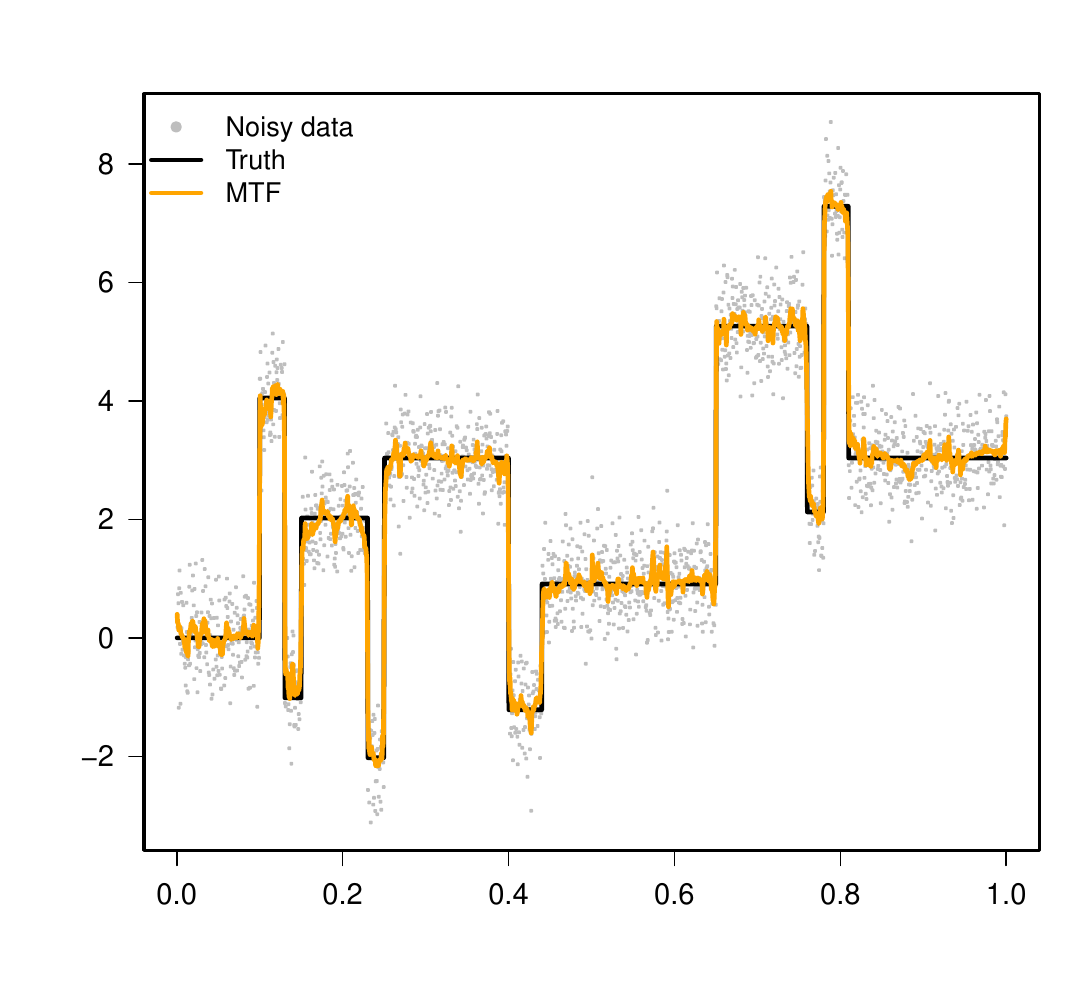}
\includegraphics[scale = 0.4]{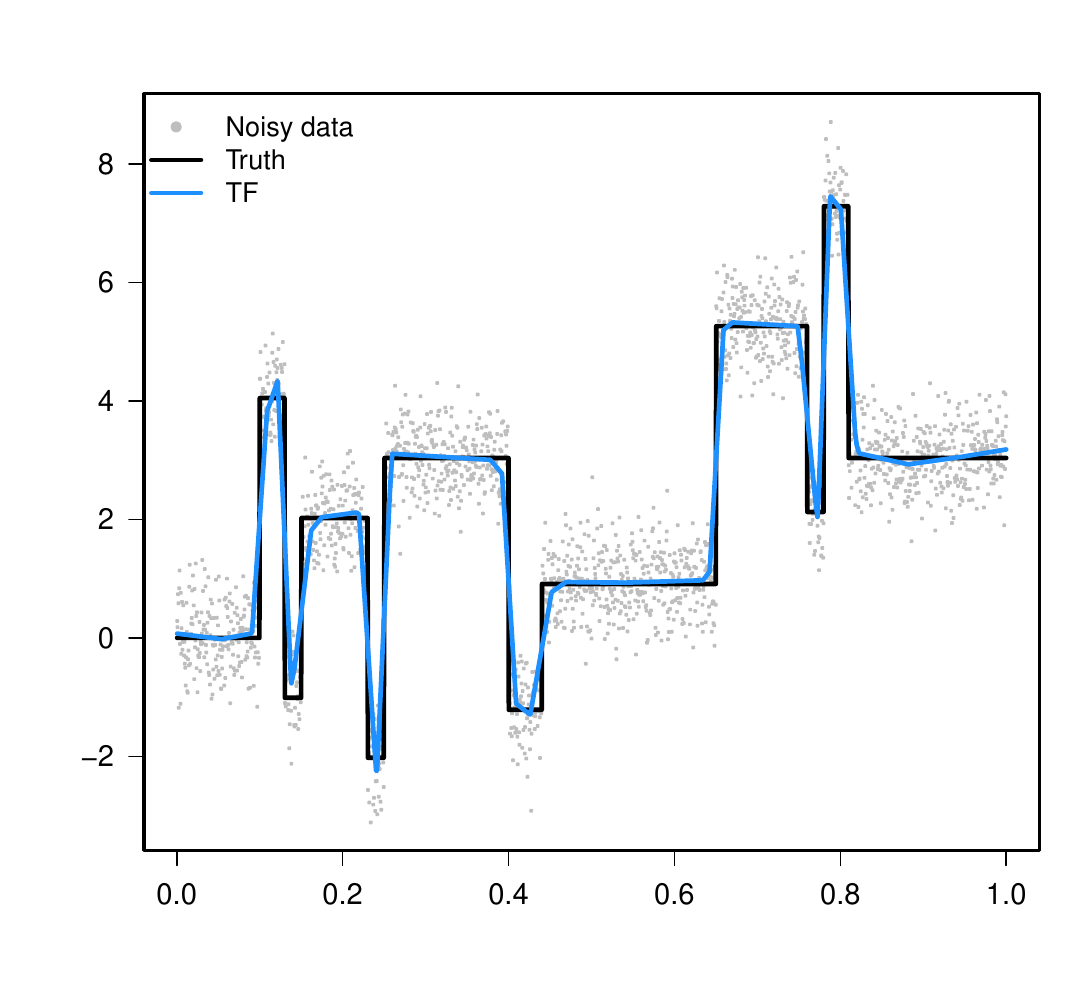}
\caption{The \texttt{Blocks} function. We compare DSMTF with $r=1$ and first-order
Trend Filtering.}
\label{fig:comp_blocks}
\end{figure}

\begin{figure}[!th]
\centering
\includegraphics[scale = 0.4]{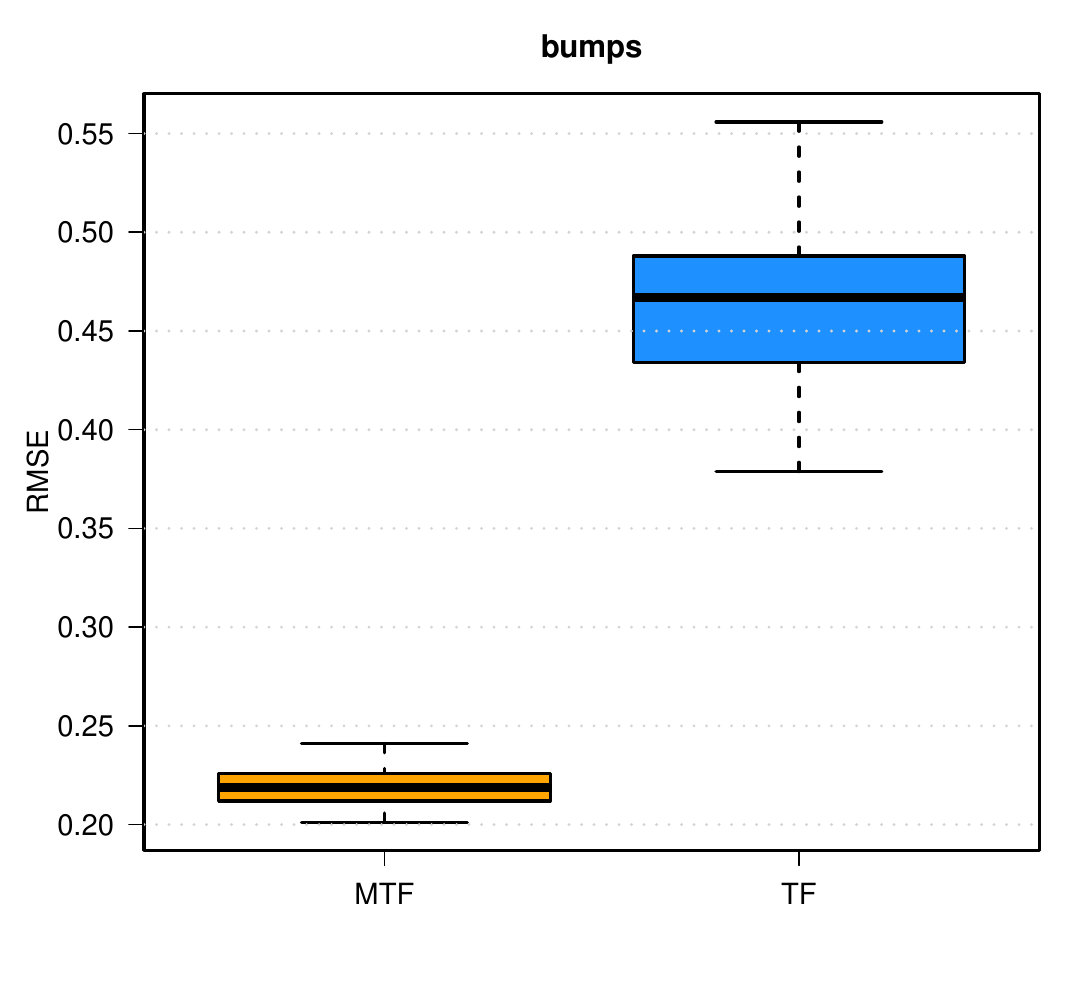}
\includegraphics[scale = 0.4]{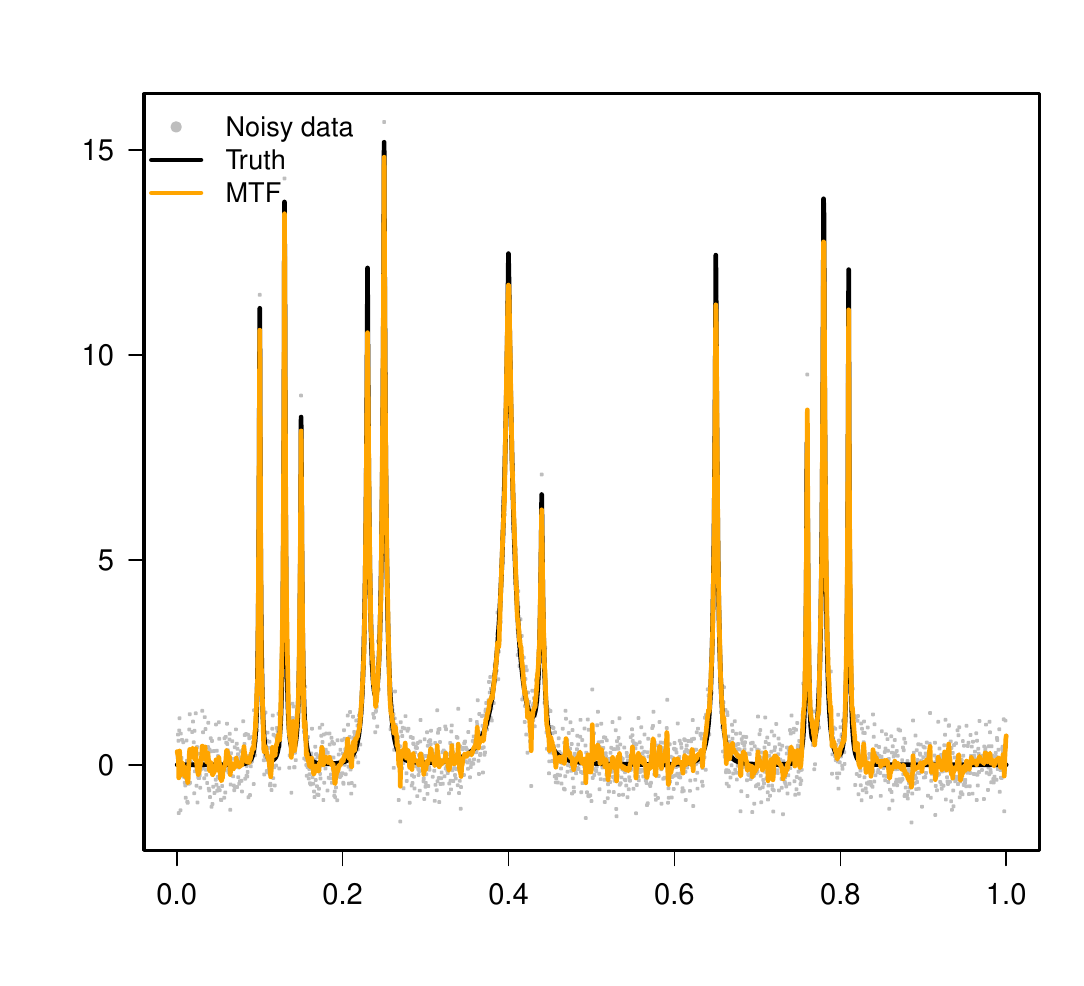}
\includegraphics[scale = 0.4]{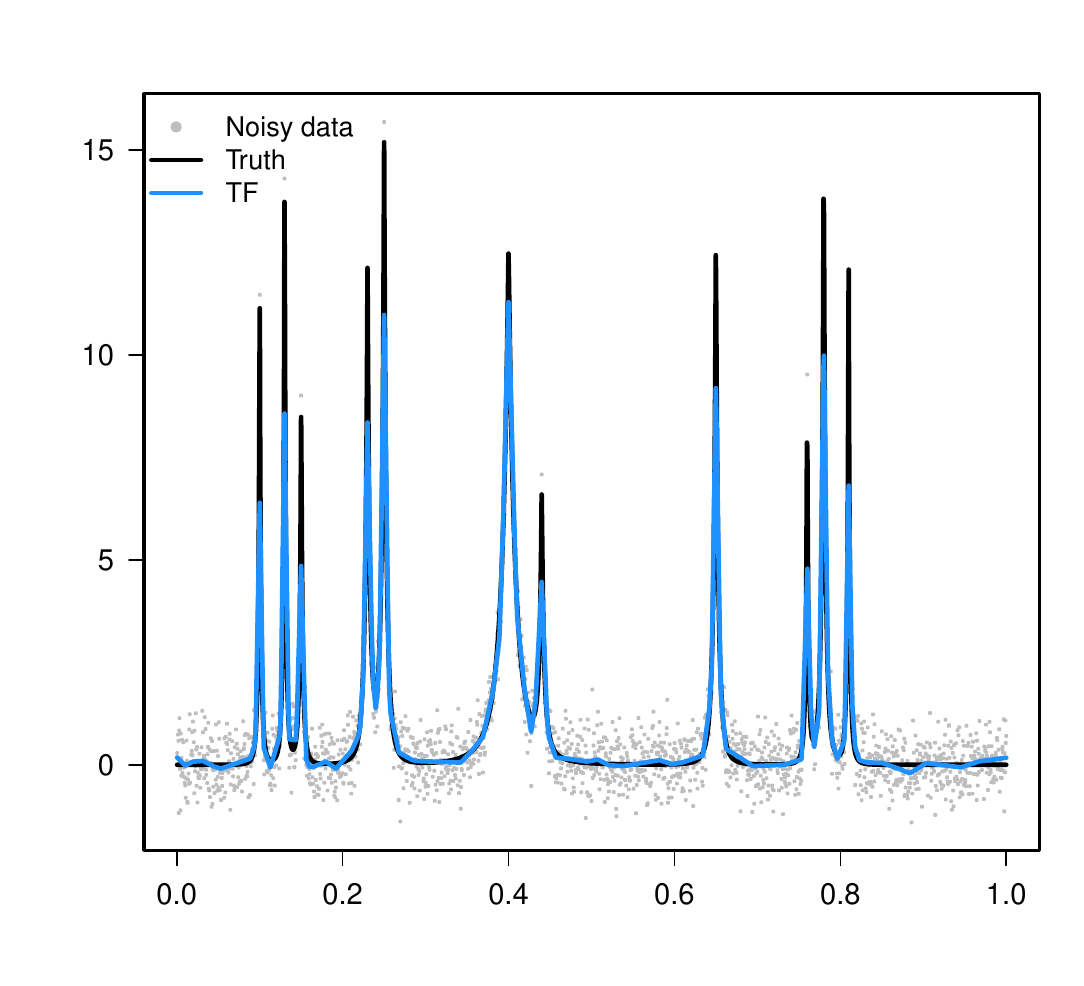}
\caption{The \texttt{Bumps} function. We compare DSMTF with $r=1$ and first-order
Trend Filtering.}
\label{fig:comp_bumps}
\end{figure}

\begin{figure}[!th]
\centering
\includegraphics[scale = 0.4]{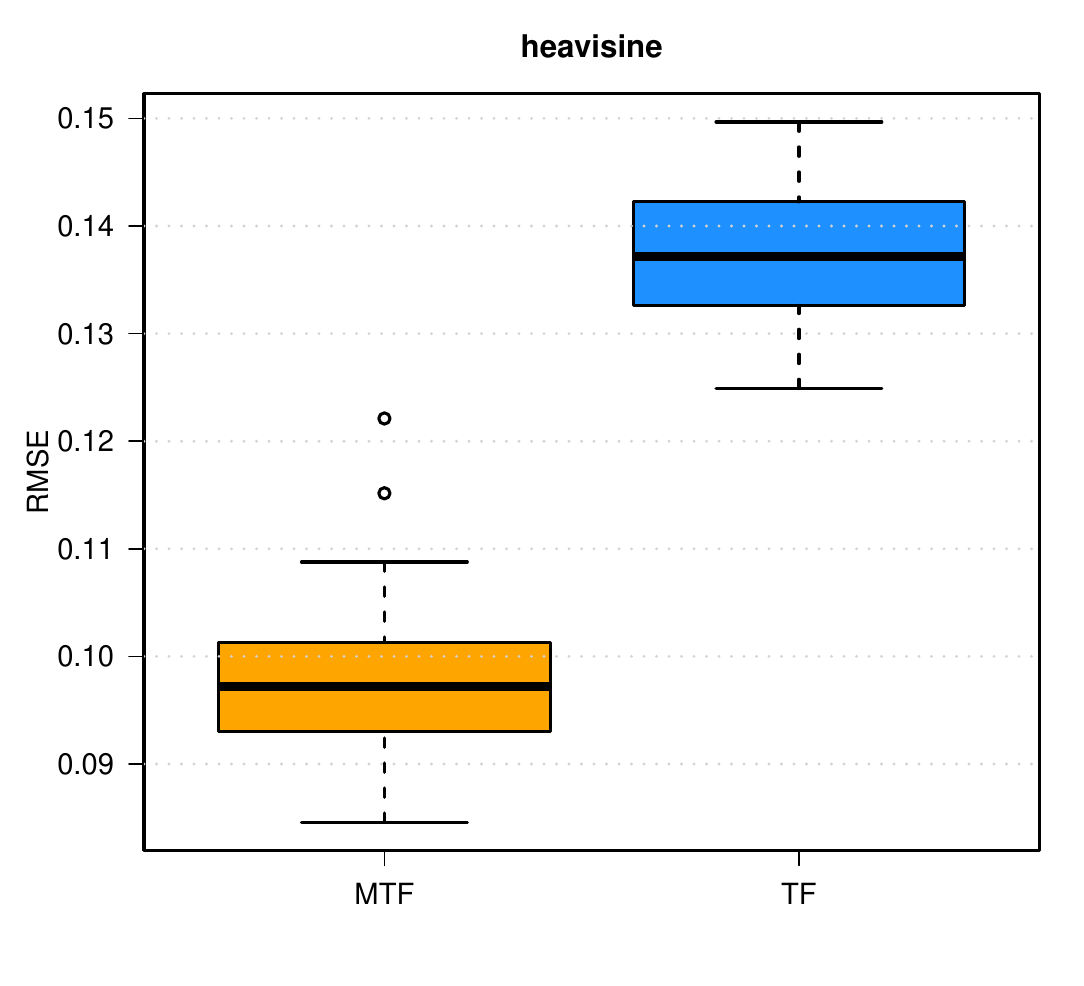}
\includegraphics[scale = 0.4]{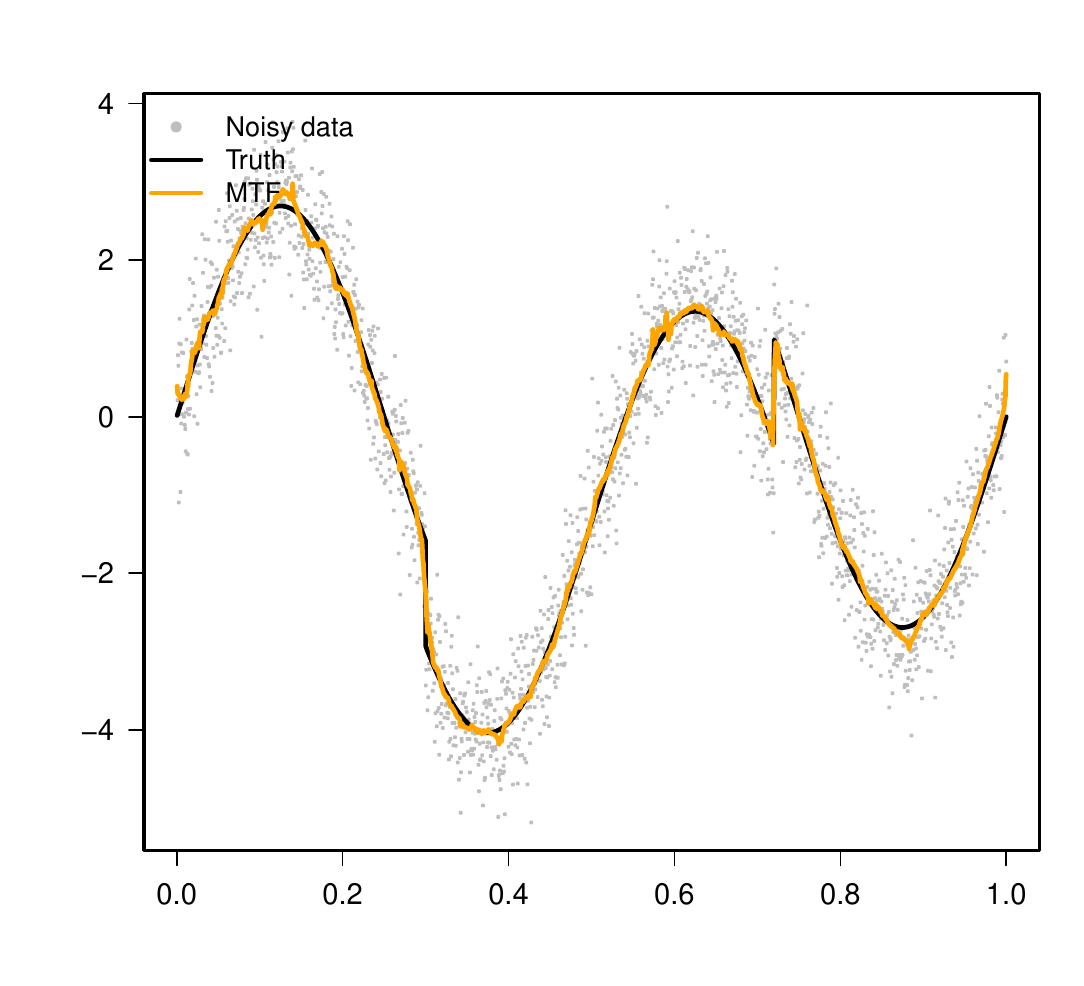}
\includegraphics[scale = 0.4]{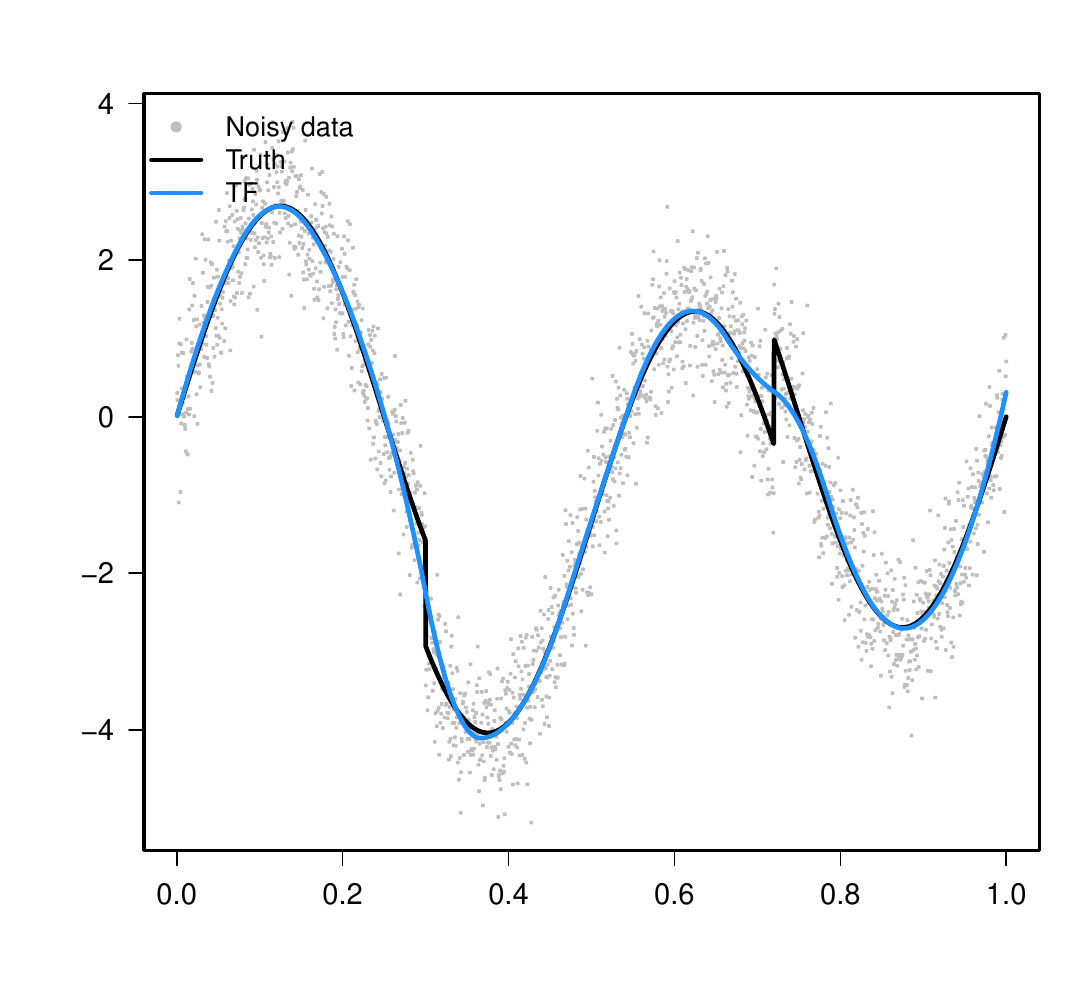}
\caption{The \texttt{Heavisine} function. We compare DSMTF with $r=2$ and second-order
Trend Filtering.}
\label{fig:comp_heavisine}
\end{figure}

\begin{figure}[!th]
\centering
\includegraphics[scale = 0.4]{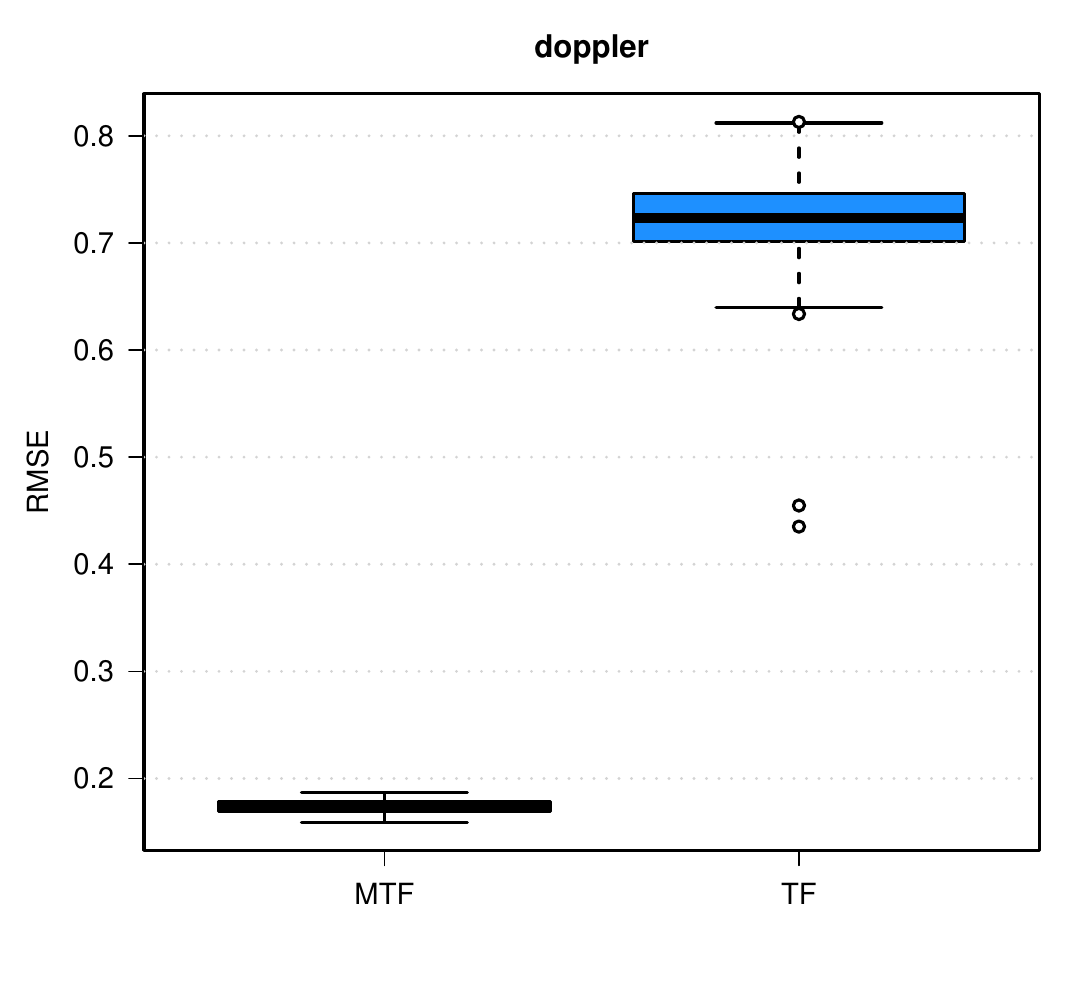}
\includegraphics[scale = 0.4]{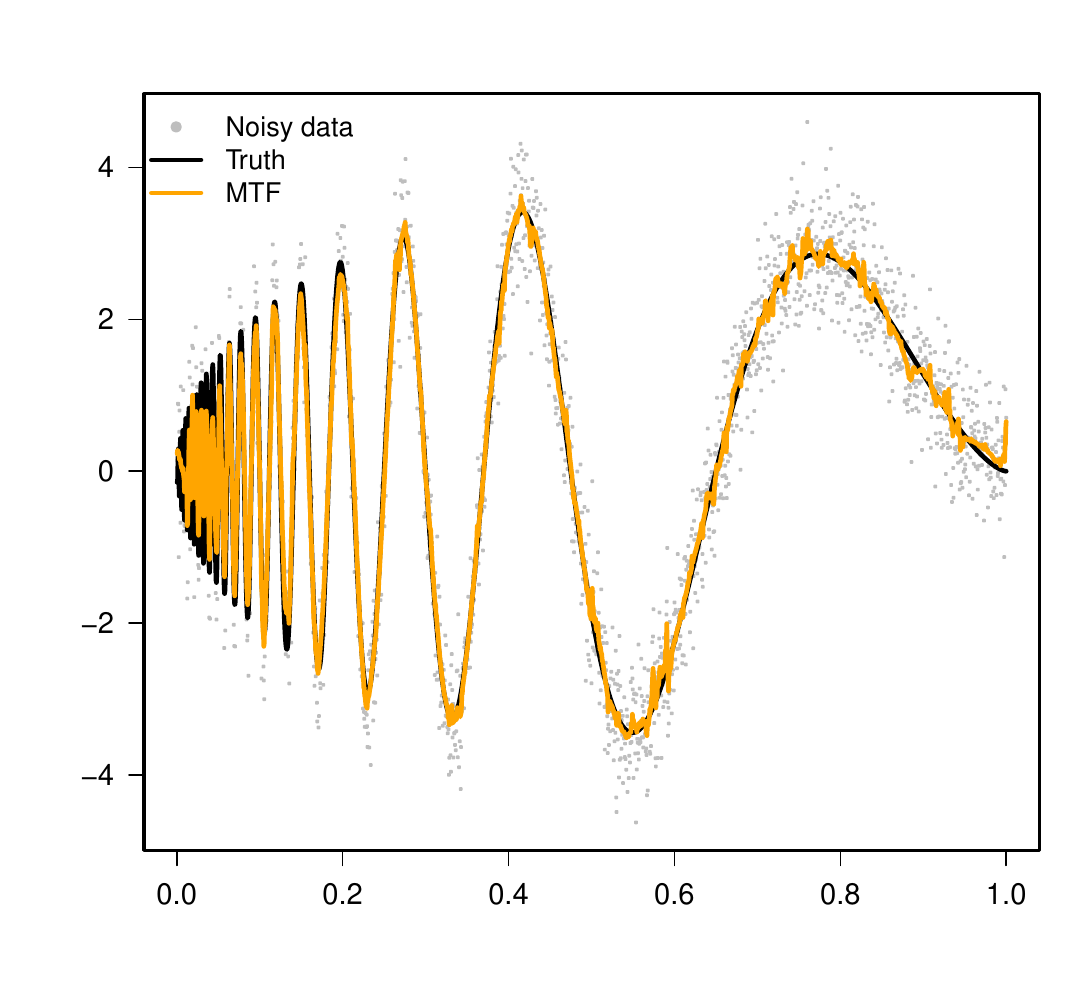}
\includegraphics[scale = 0.4]{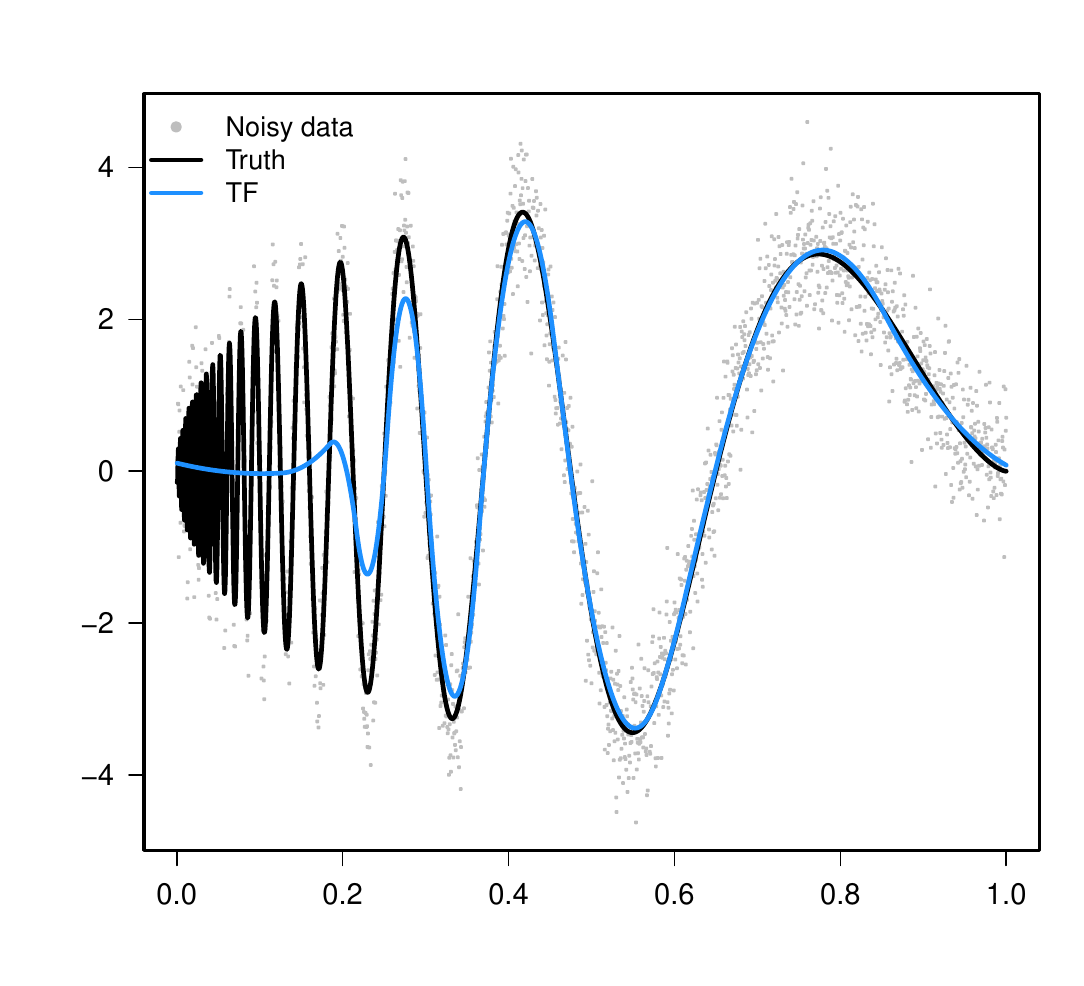}
\caption{The \texttt{Doppler} function. We compare DSMTF with $r=2$ and second-order
Trend Filtering.}
\label{fig:comp_doppler}
\end{figure}

Figures~\ref{fig:comp_blocks}--\ref{fig:comp_doppler} summarize the results of four
simulation studies, one for each of the benchmark functions. In each experiment,
we generate data of the form
\[
y_i = \theta_i + \sigma \varepsilon_i, \qquad i = 1,\ldots,n,
\]
where $\varepsilon \sim N_n(0,\mathrm{Id})$ and
\[
\theta_i = f\!\left(\frac{i}{n}\right), \qquad f \in
\{\texttt{Blocks},\texttt{Bumps},\texttt{HeaviSine},\texttt{Doppler}\}.
\]
The signal is rescaled according to
\[
\theta \leftarrow \mathrm{SNR}\cdot\sigma\cdot
\frac{\theta}{\mathrm{sd}(\theta)},
\]
where $\mathrm{sd}(\theta)$ denotes the empirical standard deviation, so that
$\mathrm{SNR} = \mathrm{sd}(\theta)/\sigma$ represents the signal-to-noise ratio.

Since Trend Filtering coincides with TVD when $r=0$, we focus on $r>0$. Specifically, we
use $r=1$ for the \texttt{Blocks} and \texttt{Bumps} functions, and $r=2$ for the
\texttt{HeaviSine} and \texttt{Doppler} functions, matching the order of Trend
Filtering accordingly.

Throughout these experiments, we fix $n=2048$, $\sigma=0.5$, and
$\mathrm{SNR}=4$. Each boxplot is based on $50$ Monte Carlo replications. The
tuning parameter $\lambda$ is selected using $5$-fold cross-validation for both
DSMTF and Trend Filtering. In each of
Figures~\ref{fig:comp_blocks}--\ref{fig:comp_doppler}, the top panel displays
boxplots of the RMSE across replications, while the bottom two panels show fitted
curves for a representative realization. To clarify, the RMSE is computed in-sample; that is, we calculate $$RMSE = \sqrt{\frac{1}{n} \sum_{i = 1}^{n} \big(\hat{\theta}^{(r,\hat{\lambda}_{cv})}_{i} - \theta^*_{i}\big)^2}.$$

Across all four test functions, DSMTF exhibits substantially improved
performance relative to Trend Filtering. For example, in the \texttt{Doppler}
experiment (Figure~\ref{fig:comp_doppler}), DSMTF accurately captures more than
seven oscillatory cycles in the displayed realization, whereas Trend Filtering
captures only a few. In the \texttt{Bumps} example
(Figure~\ref{fig:comp_bumps}), first-order Trend Filtering fails to recover
several prominent peaks, while DSMTF captures most of these local features. For
the \texttt{HeaviSine} function (Figure~\ref{fig:comp_heavisine}), DSMTF with
$r=2$ successfully recovers the kink near $x\approx0.7$, a feature missed by
second-order Trend Filtering. Finally, for the piecewise constant
\texttt{Blocks} function, we can see in Figure~\ref{fig:comp_blocks} that DSMTF localizes the change points more accurately. In all cases,  the RMSE of
DSMTF is stochastically smaller than that of Trend Filtering by a large and
statistically significant margin. 

Trend Filtering fits discrete splines of order $r$, that is, piecewise polynomials with enforced smoothness constraints at the knots; see \cite{tibshirani2020divided}. In contrast, MTF is not constrained
to produce spline fits and can therefore adapt to signals that are
discontinuous, have discontinuous derivatives, or exhibit nondifferentiable
behavior. Moreover, the performance of Trend Filtering can be sensitive to the
choice of order $r$. For instance, when the true signal is nearly piecewise
constant with heterogeneous segment lengths, choosing $r=1$ or $r=2$ instead of
$r=0$ can substantially degrade performance. This phenomenon does not arise for
MTF: since piecewise constant signals are also piecewise linear or quadratic,
MTF with $r=1$ or $r=2$ continues to perform well in such settings.

Finally, we investigate robustness with respect to the signal-to-noise ratio (SNR).
Holding all other parameters fixed, we increase the noise level $\sigma$ from $0.5$
to $1$, $2$, and $4$, corresponding to $\mathrm{SNR}=2, 1,$ and $0.5$, respectively,
and repeat the experiment on the \texttt{Doppler} test function. The results are
reported in Figure~\ref{fig:snr}. We observe that DSMTF keeps outperforming trend filtering
as the SNR is reduced from $4$ to $2$ and $1$. However, when the SNR decreases further
to $0.5$, trend filtering achieves a lower RMSE than DSMTF, suggesting that TF may
be preferable in extremely noisy regimes. That said, in this low-SNR setting both
estimators perform poorly on the Doppler function, indicating that neither method
is reliable under such severe noise conditions.

\begin{figure}[!th]
\centering
\includegraphics[scale = 0.5]{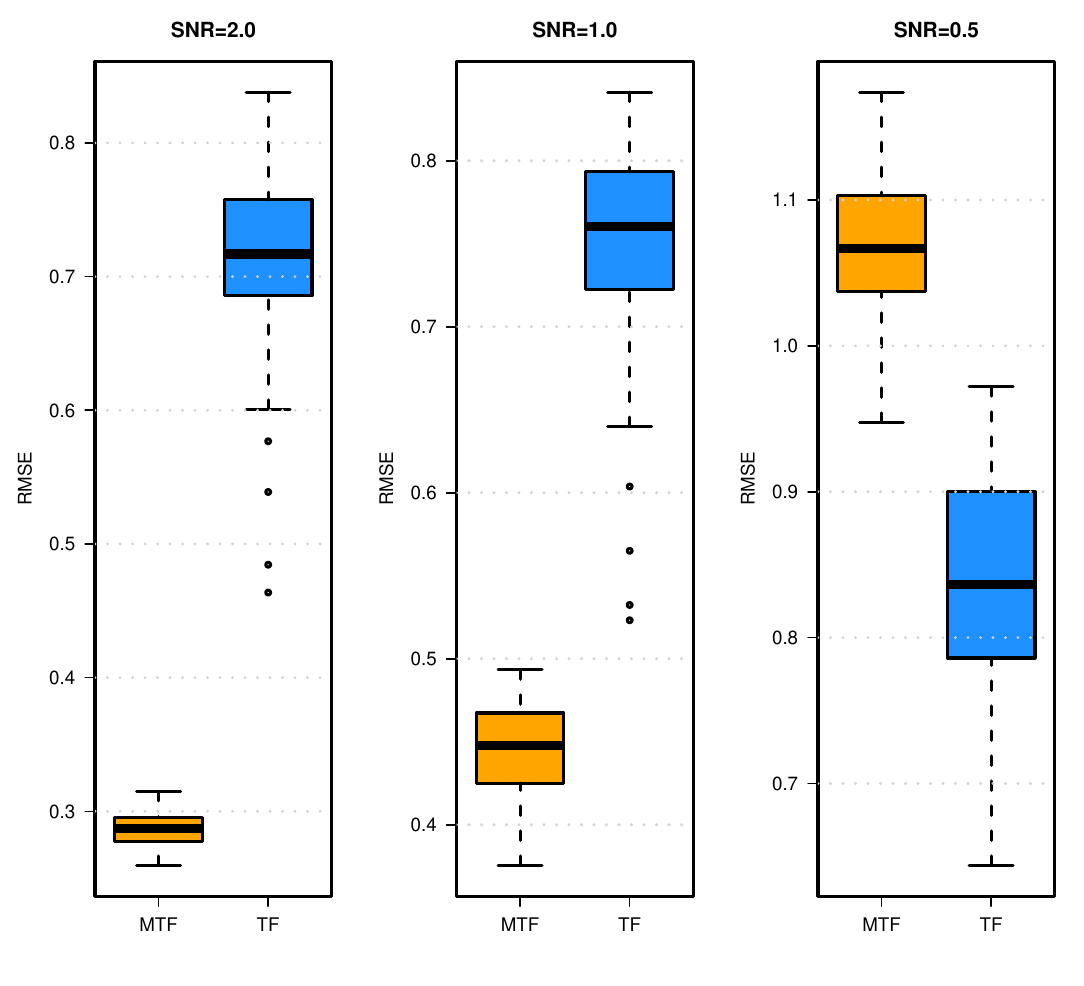}
\caption{Comparison of DSMTF and Trend Filtering on the \texttt{Doppler} function
for decreasing signal-to-noise ratios.}
\label{fig:snr}
\end{figure}

Overall, these experiments suggest that DSMTF can offer substantial gains over
Trend Filtering when the underlying signal exhibits strongly heterogeneous local
smoothness. We emphasize that we do not claim MTF to be universally superior to
trend filtering. Indeed, for sufficiently regular signals, trend filtering is
expected to perform better. For example, on a simple sinusoidal signal, we observe
that trend filtering achieves an RMSE approximately half that of MTF under the same
simulation settings considered in this section. This behavior is unsurprising, as
TF fits are constrained to be discrete splines and are therefore inherently smoother
than MTF fits. For such regular signals, estimating by smooth piecewise polynomial
functions is preferable, and trend filtering remains more suitable.

Nevertheless, our simulations indicate that (DS)MTF can be a practically useful and
competitive alternative to trend filtering in settings where strong adaptivity to
local irregularity is essential. The goal of these preliminary simulations is not to
assert uniform dominance of MTF, but rather to highlight regimes in which it serves
as an effective practical alternative and to motivate further empirical evaluation
of MTF across a broader range of applied problems.


\section{Discussion}
\label{sec:discuss}

This section discusses some additional aspects of the present work and outlines
related directions for future research. We first compare our pointwise results for total
variation denoising (TVD) with recent work in the literature. We then highlight broader
methodological connections and open problems suggested by the minmax/maxmin perspective
developed in this article.

\subsection{Relation to Previous Work on Pointwise Bounds for TVD}
\label{sec:previous}

The paper~\cite{zhang2023element} initiated the study of pointwise estimation error bounds
for univariate total variation denoising (TVD). We summarize below the main ways in which
the present work differs from and extends~\cite{zhang2023element}.

\begin{itemize}

\item Beyond piecewise constant signals.

The main result for TVD in~\cite{zhang2023element} is Theorem~4, which (informally) yields,
with high probability,
\[
|\hat{\theta}_i - \theta^*_i|
\;\lesssim\;
\frac{1}{\sqrt{d_i}} + \frac{1}{\lambda} + \frac{\lambda}{\ell_i},
\]
where $d_i$ is the distance from index $i$ to the nearest change point of $\theta^*$ and
$\ell_i$ is the length of the constant segment of $\theta^*$ containing $i$ (up to
logarithmic factors and dependence on the noise level).

This bound is meaningful primarily for \emph{exactly} piecewise constant signals with long
constant stretches. If $\theta^*$ has all distinct entries, then $d_i=\ell_i=1$ and the
bound becomes $O(1)$, even when $\theta^*$ is well approximated by a piecewise constant
signal with few jumps. As a result, the bound in~\cite{zhang2023element} does not directly
yield the fast rate for nearly piecewise constant signals
(Theorem~\ref{thm:fast}), the slow rate over bounded variation classes
(Theorem~\ref{thm:slow}), or the local rates for $C^{r,\alpha}$ functions with
$\alpha\in(0,1]$ (Theorem~\ref{thm:mainada}).

In contrast, the pointwise error bound developed here
(Theorem~\ref{thm:maingeneral}) holds for \emph{arbitrary} signals $\theta^*$ and recovers
the piecewise constant result of~\cite{zhang2023element} as a special case. To the best of our knowledge, even in the case $r=0$, Theorem~\ref{thm:maingeneral} is new and provides a natural and far reaching extension of
Theorem~4 in~\cite{zhang2023element}. Its generality enables a unified local rates analysis
(Theorem~\ref{thm:mainada}) for all degrees $r\ge0$ and all
$\alpha\in(0,1]\cup\{\infty\}$, with the setting $r=0$, $\alpha=\infty$ corresponding to
\cite{zhang2023element}. Because these bounds apply pointwise to every signal, they enable a direct study of how risk depends on local smoothness and to compare risk curves of TVD
and local averaging, yielding a complementary explanation of local adaptivity for TVD and
MTF.

\item Minmax/maxmin formulation of the TVD fit.

A central observation underlying our analysis is that the TVD estimator itself admits an
\emph{exact pointwise minmax/maxmin representation} in terms of modified local averages.
This identity holds for all data vectors $y$ and all tuning parameters $\lambda$, without
assumptions on the true signal. Such a pointwise
representation of the fitted values $\hat{\theta}_i$ was neither formulated nor exploited
in~\cite{zhang2023element}. We view the identification and use of this minmax/maxmin
structure as a key conceptual contribution of the present work.

\item Proof technique and bias--variance structure.

Expressing pointwise quantities in minmax/maxmin form is crucial for tractable analysis;
see, for example, \cite{Zhang02,deng2018isotonic} in the context of isotonic regression. In
our setting, the minmax/maxmin representation yields a pointwise bound that takes the form
of an optimized local bias--variance type tradeoff.

In~\cite{zhang2023element}, the analysis focuses on a single interval: the constant segment
of $\theta^*$ containing index $i$, on which the bias is zero by construction. Since the relevant interval
depends on the unknown $\theta^*$, such an approach cannot yield a pointwise formula for the TVD fit and moreover does not extend beyond locally constant signals. By contrast, our analysis considers all intervals $J$
containing $i$, leading to bounds involving both a local bias term and a local standard
deviation term $SD^{(r)}(i,J,\lambda)$. Optimizing this tradeoff over $J$ permits pointwise
control even when $\theta^*$ is not locally constant.

\item Extension to higher degrees.

The minmax representation provides a new perspective on TVD and naturally suggests
entry-wise polynomial generalizations; see Section~\ref{sec:minmaxtf}. Such formulas would
be difficult to anticipate without first identifying the minmax structure of the TVD fit.
Accordingly, higher-degree generalizations lie outside the scope of
\cite{zhang2023element}, whose analysis is intrinsically tied to piecewise constant
signals. In contrast, the present work introduces estimators with explicit pointwise
formulas for all $r\ge0$ and develops a unified pointwise analysis that is meaningful for all polynomial degrees and a broad class of signals.

\end{itemize}

\subsection{Some Further Aspects and Open Directions}

We conclude by highlighting several additional observations and open directions suggested
by the results of this article.

\begin{itemize}

\item Extensions to other loss functions:
It might be of interest to investigate whether the minmax/maxmin representation developed
here for squared-error loss extends to other losses, such as those arising in quantile or
logistic regression.

\item Tuning parameter adaptivity:
As discussed in Section~\ref{sec:local}, the MTF estimators require different choices of
the tuning parameter $\lambda$ depending on the local smoothness of the signal. A relevant question is whether the definition of MTF can be modified so that a
\emph{single} global choice of $\lambda$ yields adaptivity to multiple local H\"older
smoothness levels. Nonparametric estimators with this stronger form of adaptivity, while
remaining computationally efficient, are relatively rare; see
\cite{chatterjee2024new} and references therein.

\item Theorems~\ref{thm:ptwiseformula} and~\ref{thm:maingeneral} provide an exact pointwise
identity for the fitted values and a transparent bias--variance interpretation of the
pointwise estimation error for univariate TVD (trend filtering of order $0$), and
consequently for Minmax Trend Filtering of all orders $r \ge 0$. By contrast, deriving
analogous pointwise representations or pointwise risk bounds for classical trend
filtering of higher orders remains an open problem. The proof of the pointwise min--max/max--min identity for TVD relies on structural
features that are closely tied to first-order total variation regularization and,
to our knowledge, do not carry over in a direct way to higher-order trend filtering.
In the TVD case, the KKT conditions introduce a single
real-valued sequence whose increments correspond to first
differences of the fitted signal and which is subject to simple box constraints
and boundary pinning. This structure yields an exact interval-averaging identity; averaging the fitted values over any interval causes the contribution of the regularization term to collapse to boundary terms only. Together with the
piecewise-constant geometry of the TVD solution, this identity underlies the
plateau/valley argument that leads to an exact pointwise min--max representation.

For trend filtering of order $r \ge 1$, the corresponding dual variables encode
higher-order discrete derivatives.
As a result, we are not aware of an analogue of the telescoping interval identity
used in the TVD proof, and interval averages of the optimality conditions generally
depend on interior behavior rather than only on endpoints. Moreover, higher-order
trend filtering produces piecewise polynomial fits, so the geometric notion of
maximal plateaus and valleys that plays a central role in the TVD argument does not
have an obvious counterpart. These differences suggest that extending the TVD proof
strategy to classical higher-order trend filtering would likely require additional new ideas. Nevertheless, we hope that the pointwise
perspective and proof techniques developed here will help stimulate progress toward
analogous results for classical higher-order trend filtering.

\item Beyond TVD, kernel and nearest-neighbor variants:
The minmax/maxmin principle underlying our construction is not specific to TVD. In principle, one may start from any of the classical linear smoothers—
such as kernel smoothing or $k$-nearest-neighbor estimators—and apply the same framework to obtain
locally adaptive, pointwise-defined variants. A systematic study of such estimators is an
interesting direction for future work.

\item Simplicity of the proof technique:
Our analysis relies on relatively elementary probabilistic tools, primarily square-root
logarithmic bounds on maxima of sub-Gaussian random variables. This simplicity contributes
to the transparency of the resulting error bounds and clarifies how $\lambda$ should be
chosen in relation to local bias and variance considerations.

\item Connections to isotonic regression:
Univariate isotonic regression admits a classical pointwise minmax representation
\cite{RW75,RWD88}, which has been instrumental in deriving sharp pointwise risk bounds;
see, e.g., \cite{Zhang02,chatterjee2015risk}. Such representations have also been
extended to certain multivariate settings
\cite{fokianos2020integrated,deng2020isotonic}. In contrast, the estimators studied here
are not shape-constrained: the minmax/maxmin optimization is taken over intervals and
their subintervals containing a fixed point. To the best of our knowledge, this work is
the first to define a non–shape-constrained nonparametric regression estimator directly
via a pointwise minmax/maxmin formula.

\end{itemize}

To summarize, the central contribution of this work is the identification of an exact pointwise
minmax/maxmin representation for total variation denoising and its higher degree polynomial
generalizations. This representation yields transparent pointwise error bounds in the
form of local bias--variance type tradeoffs, enables a unified analysis across smoothness
classes and polynomial degrees, and offers a new conceptual explanation of local
adaptivity beyond classical shape-constrained settings. More broadly, the minmax/maxmin
perspective provides a general and flexible framework for constructing locally adaptive
nonparametric estimators.

\smallskip

\noindent \textbf{Acknowledgement.} SC's research was partly supported by the NSF Grant DMS-1916375. We thank Deep Ghoshal for many helpful discussions. We also thank three anonymous
reviewers for their helpful comments on an earlier version of the paper.

	\begin{supplement}
		\stitle{Supplement A : Supplementary File to "Minmax Trend Filtering: Generalizations of Total Variation Denoising via a Local Minmax/Maxmin Formula"}
		\sdescription{This supplementary contains the proofs of all the main results presented in this paper.}
	\end{supplement}

    \begin{appendix}
	\section{Proof of Theorem~\ref{thm:ptwiseformula}}\label{sec:ptwiseformulaproof}

\begin{proof}[Proof of Theorem~\ref{thm:ptwiseformula}]

Throughout this proof, we fix $\lambda \ge 0$ and write $\hat{\theta}$ in place of $\hat{\theta}^{(\lambda)}$ for notational simplicity.

It suffices to prove the min--max identity in \eqref{eq:fldefn}. Since the TVD objective is invariant under the transformation $(\theta,y)\mapsto(-\theta,-y)$,
it follows that $\hat\theta(\lambda;-y)=-\hat\theta(\lambda;y)$, and therefore the max--min
identity follows from the min--max identity applied to $-y$.

\subsection{Interval Identity}
We will begin by stating and proving an interval identity for the TVD solution.

\begin{lemma}[Interval identity for TVD]\label{lem:interval-identity}
There exists a vector $z=(z_0,\dots,z_n)\in\mathbb{R}^{n+1}$ such that
\[
z_0=z_n=0,
\qquad
z_k\in
\begin{cases}
\{\lambda\}, & \hat\theta_k>\hat\theta_{k+1},\\
[-\lambda,\lambda], & \hat\theta_k=\hat\theta_{k+1},\\
\{-\lambda\}, & \hat\theta_k<\hat\theta_{k+1},
\end{cases}
\quad k=1,\dots,n-1,
\]
and for every interval $I=[a:b]\subseteq[n]$,
\begin{equation}\label{eq:interval-identity}
\bar{\hat\theta}_I
=
\bar y_I - \frac{z_b-z_{a-1}}{|I|}.
\end{equation}
\end{lemma}

\begin{proof}
Let $D:\mathbb{R}^n\to\mathbb{R}^{n-1}$ be the first-difference operator
$(D\theta)_k=\theta_{k+1}-\theta_k$.
The TVD objective can be written as
\[
f(\theta)=\frac12\|\theta-y\|_2^2+\lambda\|D\theta\|_1.
\]
Since $f$ is a nondifferentiable convex function, optimality of $\hat\theta$ is equivalent to
$0\in\partial f(\hat\theta)$ where $\partial$ is the usual notation referring to the set of subgradients at a given point.

Hence there exists $s\in\partial\|D\hat\theta\|_1$ such that
\begin{equation}\label{eq:optcharac}
    0=(\hat\theta-y)+\lambda D^\top s.
\end{equation}

Componentwise,
\[
s_k\in
\begin{cases}
\{+1\}, & \hat\theta_{k+1}>\hat\theta_k,\\
[-1,1], & \hat\theta_{k+1}=\hat\theta_k,\\
\{-1\}, & \hat\theta_{k+1}<\hat\theta_k.
\end{cases}
\]

A direct computation gives

\begin{equation*}
    D^\top s = (-s_1,s_1 - s_2,s_2 - s_3,\dots,s_{n - 2} - s_{n - 1},s_{n - 1}).
\end{equation*}

Now define
\[
z_0=0,\qquad z_k=-\lambda s_k\ (k=1,\dots,n-1),\qquad z_n=0.
\]
Then the stated bounds on $z_k$ hold, and moreover we can now write

\begin{equation*}
    \lambda D^\top s = (z_1,z_2 - z_1,z_3 - z_2,\dots,z_{n - 1} - z_{n - 2},-z_{n - 1}) = (z_1 - z_0,\dots,z_{n} - z_{n - 1})
\end{equation*}

Plugging in the above expression for $\lambda D^\top s$ in~\eqref{eq:optcharac} yields
\begin{equation*}
\hat\theta_j - y_j = z_{j-1} - z_j,
\qquad j=1,\dots,n.
\end{equation*}

Summing the above display over
$j=a,\dots,b$, the right hand side telescopes to give
\[
\sum_{j=a}^b(\hat\theta_j-y_j)=z_{a-1} - z_b,
\]
which implies \eqref{eq:interval-identity}.
\end{proof}

\subsection{Upper bound (inequality)}

We now prove that the minmax identity is an upper bound, i.e., given any fixed interval $J=[a:b]\ni i$, the following holds: 
\begin{equation}\label{eq:upident}
    \hat\theta_i
\leq \max_{\substack{I\subseteq J\\ i\in I}}
\Bigl(
\bar y_I - \frac{2\lambda}{|I|} C_{I,J}
\Bigr).
\end{equation}

\textit{Let $I=[c:d]\subseteq J$ be the \emph{largest} subinterval of $J$ containing $i$ such that
\[
\hat\theta_u\ge\hat\theta_i\qquad\forall u\in I.
\]
}

We claim that 
\begin{equation*}
    \hat\theta_i
\leq \Bigl(
\bar y_I - \frac{2\lambda}{|I|} C_{I,J}
\Bigr)
\end{equation*}

which will then prove~\eqref{eq:upident}.

Note that we can write
\begin{equation*}
    \hat\theta_i \leq \bar{\hat\theta}_I = \bar{y}_I-\frac{z_d-z_{c-1}}{|I|}
\end{equation*}
where the first inequality is by definition of the interval $I$ and the equality is by Lemma~\ref{lem:interval-identity}.

In view of the last two displays it suffices to show
\begin{equation}\label{eq:upper-key}
z_{c-1}-z_d\le -2\lambda\,C_{I,J}.
\end{equation}

We verify \eqref{eq:upper-key} case by case, exactly following Definition~\ref{def:cij}. Recall that the $z$ vector satisfies 
\[
|z_k|\le \lambda \quad \text{for all } k=1,\dots,n-1,
\]
and moreover, if $\hat{\theta}_{k + 1} \neq \hat{\theta}_{k}$,
\[
z_k=\pm \lambda
\]
with the sign determined by $\operatorname{sign}(\hat\theta_{k}-\hat\theta_{k + 1})$. Also, $z_0 = z_n = 0.$

\medskip
\noindent\underline{\textbf{Case 1: $1<a\le b<n$ (interior $J$).}}
\[
C_{I,J}=
\begin{cases}
1, & I\subset J,\\
-1, & I=J,\\
0, & \text{otherwise}.
\end{cases}
\]
\begin{itemize}
\item If $I\subset J$, maximality of $I$ forces $z_{c-1}=-\lambda$ and $z_d=\lambda$,
so $z_{c-1}-z_d=-2\lambda$.

\item If $I=[c:b]$ with $c > a$ then maximality of $I$ forces $z_{c-1}=-\lambda$, and thus
$z_{c-1}-z_d=-\lambda-z_d\le 0$ since $|z_d|\le \lambda.$

\item If $I=[a:d]$ with $d < b$, then maximality of $I$ forces $z_d=\lambda$, and thus
$z_{c-1}-z_d=z_{c-1}-\lambda\le 0$ since $|z_{c-1}|\le \lambda.$

\item If $I=J$, $z_{c-1}-z_d = z_{a-1}-z_b \leq 2\lambda$ since $\max\{|z_{a - 1}|,|z_{b}|\} \leq \lambda.$
\end{itemize}

\medskip
\noindent\underline{\textbf{Case 2: $J=[1:b]$, $b<n$.}}
\[
C_{I,J}=
\begin{cases}
1, & I\subset J,\\
-\tfrac12, & I=J,\\
\tfrac12, & I=[1:d],\ d<b,\\
0, & I=[c:b],\ c>1.
\end{cases}
\]
Using $z_0=0$:
\begin{itemize}
\item $I\subset J$: same as Case 1, $z_{c-1}-z_d=-2\lambda$.
\item $I=[1:d]$, $d<b$: maximality of $I$ forces $z_d=\lambda$, hence $z_{c-1}-z_d=-z_d=-\lambda$.
\item $I=[c:b]$, $c>1$: maximality of $I$ forces $z_{c-1}=-\lambda$, hence $z_{c-1}-z_b=-\lambda-z_b \le 0$ since $|z_b|\le \lambda$.
\item $I=J$: $z_{c-1}-z_d=-z_b\le\lambda$.
\end{itemize}

\medskip
\noindent\underline{\textbf{Case 3: $J=[a:n]$, $a>1$.}}
This can be shown similar to Case $2$ using $z_n=0.$ 

\medskip
\noindent\underline{\textbf{Case 4: $J=[1:n]$.}}
\[
C_{I,J}=
\begin{cases}
1, & I\subset J,\\
0, & I=J,\\
\tfrac12, & \text{otherwise}.
\end{cases}
\]
Using $z_0=z_n=0$:

\begin{itemize}
\item $I\subset J$: same as Case 1, $z_{c-1}-z_d=-2\lambda$.

\item If $I=[c:d]$ with $c > 1, d = n$ then maximality of $I$ forces $z_{c-1}=-\lambda$, thus giving $z_{c-1}-z_d = z_{c-1} = -\lambda.$

\item If $I=[c:d]$ with $c = 1,d < n$, then maximality of $I$ forces $z_d=\lambda$, thus giving $z_{c-1}- z_d = -z_d = -\lambda.$

\item If $I=J$, then $z_{c-1}-z_d  = z_0 - z_n = 0.$

\end{itemize}

This exhausts all the possible cases and thus we have shown~\eqref{eq:upper-key} and hence proved~\eqref{eq:upident}. 

\subsection{Equality}

We now prove that the minmax identity is actually an equality, i.e., there exists an interval $J=[a:b] \ni i$, such that the following holds: 
\begin{equation}\label{eq:loident}
    \hat\theta_i
\geq \max_{\substack{I\subseteq J\\ i\in I}}
\Bigl(
\bar y_I - \frac{2\lambda}{|I|} C_{I,J}
\Bigr).
\end{equation}

\textit{Let $J=[a:b]$ be the \emph{largest} interval containing $i$ such that
\[
\hat\theta_u\le\hat\theta_i\qquad\forall u\in J.
\]
}

We will now show that~\eqref{eq:loident} holds for this particular choice of $J.$

For any subinterval $I=[c:d]\subseteq J$ containing $i$, we can write 
$$\hat\theta_i \ge \bar{\hat\theta}_I =
\bar y_I-\frac{z_d-z_{c-1}}{|I|}$$
where the first inequality is by definition of $J$ and the fact $I \subseteq J$ and the equality is by Lemma~\ref{lem:interval-identity}.

Thus it suffices to show
\begin{equation}\label{eq:lower-key}
z_{c-1}-z_d\ge -2\lambda\,C_{I,J}.
\end{equation}

We now verify \eqref{eq:lower-key} again case by case, exactly following Definition~\ref{def:cij}.

\medskip
\noindent\underline{\textbf{Case 1: $1<a\le b<n$ (interior $J$).}}
\[
C_{I,J}=
\begin{cases}
1, & I\subset J,\\
-1, & I=J,\\
0, & \text{otherwise}.
\end{cases}
\]
\begin{itemize}
\item If $I\subset J$, then $z_{c-1}-z_d \geq -2\lambda$ since $\max\{|z_{c - 1}|,|z_{d}|\} \leq \lambda.$ 

\item If $I=[c:d]$ with $a = c, d < b$, then maximality of $J$ forces $z_{c - 1} = z_{a-1}=\lambda$, giving $z_{c-1}- z_d = \lambda - z_d \ge 0$ since $|z_{d}| \leq \lambda.$

\item If $I=[c:d]$ with $c > a, d = b$ then maximality of $J$ forces $z_{d} = z_b=-\lambda$, giving $z_{c-1}-z_d = z_{c-1}+\lambda \ge 0$ since $|z_{c - 1}| \leq \lambda.$

\item If $I=J$, then by maximality of $J$ we have $\hat\theta_{a-1}>\hat\theta_a$ and $\hat\theta_b<\hat\theta_{b+1}$, hence $z_{a-1}=\lambda$ and $z_b=-\lambda$, so $z_{c-1}-z_d = z_{a-1}-z_b = 2\lambda.$
\end{itemize}

\medskip
\noindent\underline{\textbf{Case 2: $J=[1:b]$, $b<n$.}}
\[
C_{I,J}=
\begin{cases}
1, & I\subset J,\\
-\tfrac12, & I=J,\\
\tfrac12, & I=[1:d],\ d<b,\\
0, & I=[c:b],\ c>1.
\end{cases}
\]
Using $z_0=0$:
\begin{itemize}
\item $I\subset J$: same as Case 1, $z_{c-1}-z_d \geq -2\lambda$.
\item $I=[c:d]$, $c = 1, d<b$: $z_{c-1}-z_d=-z_d \geq -\lambda$.
\item $I=[c:d]$, $c>1, d = b$: maximality of $J$ forces $z_b=-\lambda$, hence $z_{c-1}-z_d = z_{c-1}+\lambda \ge 0$.
\item $I=J$: $z_{c-1}-z_d=-z_b = \lambda$.
\end{itemize}

\medskip
\noindent\underline{\textbf{Case 3: $J=[a:n]$, $a>1$.}}
This can be shown similar to Case $2$ using $z_n=0.$ 

\medskip
\noindent\underline{\textbf{Case 4: $J=[1:n]$.}}
\[
C_{I,J}=
\begin{cases}
1, & I\subset J,\\
0, & I=J,\\
\tfrac12, & \text{otherwise}.
\end{cases}
\]
Using $z_0=z_n=0$:

\begin{itemize}
\item $I\subset J$: same as Case 1, $z_{c-1}-z_d \geq -2\lambda$.

\item If $I=[c:d]$ with $c > 1, d = n$ then $z_{d}= 0$ thus giving $z_{c-1}-z_d = z_{c-1} \geq -\lambda.$

\item If $I=[c:d]$ with $c = 1,d < n$, then $z_{c - 1} = 0$ giving $z_{c-1}- z_d = -z_d \geq -\lambda.$

\item If $I=J$, then $z_{c-1}-z_d  = z_0 - z_n = 0.$

\end{itemize}

This exhausts all the possible cases and thus we have shown~\eqref{eq:lower-key} and hence proved~\eqref{eq:loident}. 

\subsection{Conclusion}
Combining~\eqref{eq:upident} and~\eqref{eq:loident} yields the min--max identity and applying the same argument to $-y$ yields the corresponding max--min identity.

\end{proof}

	\section{Proof of Theorem~\ref{thm:maingeneral}}\label{sec:mainthmproof}

To prove Theorem~\ref{thm:maingeneral}, it suffices to prove Proposition~\ref{prop:determ} and Proposition~\ref{prop:noise} which we state and prove below.

	We first define the $r$th order effective noise variable  
	$$M_i^{(r)} = \max_{I \in \mathcal{I}_i} \big[|(P^{(|I|,r)} \epsilon_{I})_i| \sqrt{|I|}\big].$$

	We now define a notion of $r$th order local standard error for any location $i \in [n]$ and any interval $J \in \mathcal{I}_i.$

	\begin{equation*}
		SE^{(r)}(i,J,\lambda) = \frac{M_i^{(r)}}{\sqrt{Dist(i,\partial J)}} \mathrm{1}(i \notin \{1,n\}) + \frac{M_i^{(r)}}{\sqrt{|J|}} + \frac{\left(M_i^{(r)}\right)^2}{4 \lambda} + \frac{2\lambda}{|J|}.
	\end{equation*}

	
	We now state our main deterministic pointwise error bound in the form of the next proposition.
	
	\begin{proposition}[Deterministic Pointwise Estimation Error as Local Bias Variance Tradeoff]\label{prop:determ}
		
		Fix a nonnegative integer $r \geq 0$ and $\lambda > 0.$ The estimation error of the $r$th order Minmax Filtering estimator defined in~\ref{defn:minmaxfgen}, at any location $i$, is deterministically bounded by a local bias variance tradeoff:



	
	\begin{equation}
		\max_{J \in \mathcal{I}_i} \left(Bias^{(r)}_{-}(i,J,\theta^*) - SE^{(r)}(i,J,\lambda)\right)	\leq \hat{\theta}^{(r,\lambda)}_i - \theta^*_i  \leq \min_{J \in \mathcal{I}_i} \left(Bias^{(r)}_{+}(i,J,\theta^*) + SE^{(r)}(i,J,\lambda)\right).
	\end{equation}
\end{proposition}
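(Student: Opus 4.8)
The plan is to reproduce, for a general degree $r\ge 0$, the derivation sketched for $r=0$ in Section~\ref{sec:flptwiseerror}, using \emph{only} the defining inequalities~\eqref{eq:defn} for $\hat\theta^{(r,\lambda)}$ together with linearity of the projection $P^{(|I|,r)}$ and the definition of the effective noise $M^{(r)}$. I will prove the upper bound in detail; the lower bound follows by a symmetric argument (swap outer $\min$/inner $\max$ for outer $\max$/inner $\min$ and reverse inequalities). \textbf{Step 1 (decoupling signal and noise).} Fix $i\in[n]$ and an arbitrary interval $J\in\mathcal I$ with $i\in J$. The right inequality in~\eqref{eq:defn} gives $\hat\theta^{(r,\lambda)}_i\le \max_{I:\,i\in I,\,I\subseteq J}\big[(P^{(|I|,r)}y_I)_i-\lambda C_{I,J}/|I|\big]$. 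Writing $y_I=\theta^*_I+\epsilon_I$ and using linearity of $P^{(|I|,r)}$ we have $(P^{(|I|,r)}y_I)_i=(P^{(|I|,r)}\theta^*_I)_i+(P^{(|I|,r)}\epsilon_I)_i$; applying $\max(a+b)\le\max a+\max b$ and subtracting $\theta^*_i$ yields
\begin{equation*}
\hat\theta^{(r,\lambda)}_i-\theta^*_i \;\le\; Bias^{(r)}_{+}(i,J,\theta^*) \;+\; \underbrace{\max_{I:\,i\in I,\,I\subseteq J}\Big[(P^{(|I|,r)}\epsilon_I)_i-\frac{\lambda C_{I,J}}{|I|}\Big]}_{T_2},
\end{equation*}
the first term being exactly $Bias^{(r)}_+(i,J,\theta^*)$ by definition. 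Note the singleton $\{i\}$ is always an admissible $I$ (for $r\ge 1$, $P^{(|I|,r)}$ is the identity whenever $|I|\le r+1$), which is what keeps these bounds non-vacuous and gives $Bias^{(r)}_+\ge 0\ge Bias^{(r)}_-$.

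\textbf{Step 2 (bounding $T_2$ by $SE^{(r)}(i,J,\lambda)$).} By the definition of $M^{(r)}$, for every interval $I$ we have $|(P^{(|I|,r)}\epsilon_I)_i|\le |P^{(|I|,r)}\epsilon_I|_\infty\le M^{(r)}/\sqrt{|I|}$, hence $T_2\le \max_{I:\,i\in I,\,I\subseteq J}\big[M^{(r)}/\sqrt{|I|}-\lambda C_{I,J}/|I|\big]$. I then split the admissible $I$ according to the three values of $C_{I,J}$ and bound the maximum over each class, summing the three bounds:
\begin{itemize}
\item $I=J$ ($C_{I,J}=-1$): the value equals $M^{(r)}/\sqrt{|J|}+\lambda/|J|$.
\item $I$ meets exactly one endpoint of $J$ ($C_{I,J}=0$): the value $M^{(r)}/\sqrt{|I|}$ is largest for the smallest admissible $I$, namely the segment joining $i$ to the nearer endpoint of $J$, of cardinality $Dist(i,\partial J)$, giving $M^{(r)}/\sqrt{Dist(i,\partial J)}$.
\item $I\subset J$ ($C_{I,J}=1$): the value is at most $\max_{x\ge 1}\big(M^{(r)}/\sqrt{x}-\lambda/x\big)\le (M^{(r)})^2/(4\lambda)$ by Lemma~\ref{lem:opt}, reparametrizing $a=1/\sqrt{x}$ and maximizing the concave quadratic $M^{(r)}a-\lambda a^2$.
\end{itemize}
Adding the three contributions gives exactly $T_2\le SE^{(r)}(i,J,\lambda)$, so $\hat\theta^{(r,\lambda)}_i-\theta^*_i\le Bias^{(r)}_+(i,J,\theta^*)+SE^{(r)}(i,J,\lambda)$.

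\textbf{Step 3 (optimizing over $J$; the lower bound).} Since $J\ni i$ was arbitrary, taking the minimum over all such $J$ gives the claimed upper bound. For the lower bound, for any fixed $J\ni i$ the left inequality in~\eqref{eq:defn} gives $\hat\theta^{(r,\lambda)}_i\ge \min_{I:\,i\in I,\,I\subseteq J}\big[(P^{(|I|,r)}y_I)_i+\lambda C_{I,J}/|I|\big]$; decoupling as in Step 1 and using $\min(a+b)\ge\min a+\min b$ together with $(P^{(|I|,r)}\epsilon_I)_i\ge -M^{(r)}/\sqrt{|I|}$ yields $\hat\theta^{(r,\lambda)}_i-\theta^*_i\ge Bias^{(r)}_-(i,J,\theta^*)-\max_{I}\big[M^{(r)}/\sqrt{|I|}-\lambda C_{I,J}/|I|\big]\ge Bias^{(r)}_-(i,J,\theta^*)-SE^{(r)}(i,J,\lambda)$ by the same three-case estimate, and then one takes the maximum over $J$. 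The argument is elementary throughout; the only place demanding a little care is Step 2, namely correctly identifying in the $C_{I,J}=0$ case that the worst admissible interval is the one of cardinality $Dist(i,\partial J)$ (so the exponent is not accidentally smaller), and the concave-quadratic reparametrization in the interior case. Everything else is linearity of projections and the elementary $\max$/$\min$ splitting inequalities already used for $r=0$ in Theorem~\ref{thm:informal}.
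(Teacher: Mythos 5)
Your proof is correct and follows essentially the same approach as the paper's: decouple signal and noise via linearity of the projection, bound the noise term uniformly by $M^{(r)}/\sqrt{|I|}$, split into the three $C_{I,J}$ cases, and invoke Lemma~\ref{lem:opt} for the strict-interior case (the paper packages the first reduction as Lemma~\ref{lem:determ} and the three-case bound as Lemma~\ref{lem:sd}, but the content is identical). The lower bound by the symmetric $\min$-side argument also matches.
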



\begin{proposition}[A Probabilistic Bound on the Effective Noise]\label{prop:noise}
	
	Recall the effective noise variable
	$$M_i^{(r)} = \max_{I \in \mathcal{I}_i} \big[|(P^{(|I|,r)} \epsilon_{I})_i| \sqrt{|I|}\big].$$
	Suppose $(\epsilon_1,\dots,\epsilon_n)$ are i.i.d with a Subgaussian($\sigma$) distribution.

	For full MTF, when $\mathcal{I}_i$ is the set of all intervals of $[n]$ containing $i$, with (polynomially high) probability not less than $1 - n^{-c}$, 
	$$M_i^{(r)} \leq C_r \sigma \sqrt{\log n}$$
	where $C_r > 0$ is an absolute constant which only depends on $r$ and $c$ which can be fixed to be any positive number, say $5$. 

    For DSMTF, when $\mathcal{I}_i$ is the set of all dyadic intervals of $[n]$ centred at $i$, with (polynomially high) probability not less than $1 - (\log n)^{-c}$, 
	$$M_i^{(r)} \leq C_r \sigma \sqrt{\log \log n}.$$

    \end{proposition}

\begin{remark}
	The above proposition is proved by showing that for any interval $I \in \mathcal{I}_i$ the random variable $\big[|(P^{(|I|,r)} \epsilon_{I})_i| \sqrt{|I|}\big]$ is subgaussian with subgaussian norm of the order $\sigma$ and then using the standard maxima bound for subgaussians. Technical facts about projection matrices on the subspace of polynomials are used to show the subgaussianity property. 
\end{remark}

\begin{remark}\label{rem:mtfvsdsmtf}
The above proposition implies that, for the DSMTF estimator, the $\log n$ factor
in the standard deviation term can be improved to $\log\log n$ at the cost of a
weaker probability guarantee, namely $1-(\log n)^{-c}$ instead of $1-n^{-c}$.
Accordingly, one could state a version of Theorem~\ref{thm:maingeneral} for
DSMTF at a fixed location $i$ with this sharper standard deviation bound on a
lower-probability event.

However, throughout the paper we emphasize bounds that hold simultaneously over
all locations $i\in[n]$. For such uniform guarantees, even in the DSMTF setting,
a union bound over locations necessarily reintroduces a $\log n$ factor. For
this reason, we do not state separate theorems for DSMTF, and instead present
results that apply uniformly to both the full MTF and DSMTF estimators.
\end{remark}

We now prove the above two propositions.

\subsubsection{Proof of Proposition~\ref{prop:determ}}
\begin{proof}[Proof of Proposition~\ref{prop:determ}]
This proof relies on a few intermediate lemmas. 
The first lemma is the following.

\begin{lemma}\label{lem:determ}
Fix a nonnegative integer $r \geq 0.$ Fix any location $i \in [n]$ and any interval $J \in \mathcal{I}_i.$ Recall the ($r$th order) positive and negative bias terms
\begin{equation*}
Bias^{(r)}_{+}(i,J,\theta^*) = \max_{I \in \mathcal{I}_i: I \subseteq J} \big[(P^{(|I|,r)} \theta^*_{I})_i - \theta^*_i\big]
\end{equation*}

\begin{equation*}
Bias^{(r)}_{-}(i,J,\theta^*) = \min_{I \in \mathcal{I}_i: I \subseteq J} [(P^{(|I|,r)} \theta^*_{I})_i - \theta^*_i]
\end{equation*}

Also recall the $r$th order effective noise term  
$$M_i^{(r)} = \max_{I \in \mathcal{I}_i} \big[|(P^{(|I|,r)} \epsilon_{I})_i| \sqrt{|I|}\big].$$

Now define the following intermediate standard error quantity

\begin{equation*}
\tilde{SE}(i,J,\lambda) = \max_{I \in \mathcal{I}_i: I \subseteq J} \big[\frac{M_i^{(r)}}{\sqrt{|I|}} - \frac{2\lambda C_{I,J}}{|I|}\big].
\end{equation*}

Then the following deterministic inequality is  true:

\begin{equation*}
\max_{J \in \mathcal{I}_i} \left(Bias^{(r)}_{-}(i,J,\theta^*) - \tilde{SE}(i,J,\lambda)\right)
\leq \hat{\theta}^{(r,\lambda)}_i - \theta^*_i
\leq
\min_{J \in \mathcal{I}_i} \left(Bias^{(r)}_{+}(i,J,\theta^*) + \tilde{SE}(i,J,\lambda)\right).
\end{equation*}

\end{lemma}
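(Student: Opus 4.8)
The plan is to read off both inequalities directly from the sandwich relation defining $\hat{\theta}^{(r,\lambda)}_i$ in Definition~\ref{defn:minmaxf}, after splitting the local polynomial fit on each interval into a ``signal'' piece and a ``noise'' piece. Writing $y_I = \theta^*_I + \epsilon_I$ and using linearity of the projection, for every interval $I$ with $i \in I$ we have
$$(P^{(|I|,r)} y_I)_i \;=\; \theta^*_i \;+\; \big[(P^{(|I|,r)} \theta^*_I)_i - \theta^*_i\big] \;+\; (P^{(|I|,r)} \epsilon_I)_i ,$$
where the middle bracket is precisely the quantity appearing inside $Bias^{(r)}_{\pm}$, and the last term is controlled coordinatewise by $|(P^{(|I|,r)} \epsilon_I)_i| \le |P^{(|I|,r)}\epsilon_I|_{\infty} \le M^{(r)}/\sqrt{|I|}$, directly from the definition of $M^{(r)}$.

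For the upper bound, fix any $J \in \mathcal{I}$ with $i \in J$. The right-hand inequality of Definition~\ref{defn:minmaxf} gives $\hat{\theta}^{(r,\lambda)}_i \le \max_{I \subseteq J,\, i \in I} \big[(P^{(|I|,r)} y_I)_i - \lambda C_{I,J}/|I|\big]$; substituting the decomposition, using subadditivity of the maximum $\max_I (a_I + b_I) \le \max_I a_I + \max_I b_I$, and bounding $(P^{(|I|,r)} \epsilon_I)_i \le M^{(r)}/\sqrt{|I|}$ in the noise term, the right-hand side is at most $\theta^*_i + Bias^{(r)}_{+}(i,J,\theta^*) + \tilde{SE}(i,J,\lambda)$. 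Subtracting $\theta^*_i$ and minimizing over all $J \ni i$ gives the claimed upper bound. The lower bound is symmetric: fix $J \ni i$, use $\hat{\theta}^{(r,\lambda)}_i \ge \min_{I \subseteq J,\, i \in I}\big[(P^{(|I|,r)} y_I)_i + \lambda C_{I,J}/|I|\big]$, apply superadditivity of the minimum, bound $(P^{(|I|,r)} \epsilon_I)_i \ge -M^{(r)}/\sqrt{|I|}$, note that $\min_{I \subseteq J,\, i\in I}\big[-M^{(r)}/\sqrt{|I|} + \lambda C_{I,J}/|I|\big] = -\tilde{SE}(i,J,\lambda)$, and finally maximize over $J \ni i$.

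I do not expect a genuine obstacle here; the argument is pure bookkeeping once the defining inequalities are in hand. The points needing a little care are: that passing from ``min/max of a sum'' to ``sum of mins/maxs'' moves in the favorable direction on each side, so that the only slack introduced is exactly what $\tilde{SE}$ already records; that the coordinatewise noise bound holds uniformly over all $I$, including the singleton $\{i\}$ (where $P^{(|I|,r)}$ is the identity, so the decomposition is just $y_i = \theta^*_i + \epsilon_i$ and the bias vanishes); and that the penalty $-\lambda C_{I,J}/|I|$ is carried verbatim into $\tilde{SE}$ on the upper side while its sign flip $+\lambda C_{I,J}/|I|$ turns into $-\tilde{SE}$ on the lower side. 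From here, Proposition~\ref{prop:determ} (and then Theorem~\ref{thm:maingeneral}) follows by the case analysis on $C_{I,J} \in \{-1,0,1\}$ that simplifies $\tilde{SE}$ to the cleaner $SE^{(r)}$, combined with the probabilistic bound $M^{(r)} = O(\sigma\sqrt{\log n})$ from Proposition~\ref{prop:noise}.
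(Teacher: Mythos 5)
Your argument is correct and is essentially the paper's own proof: you decompose $(P^{(|I|,r)} y_I)_i$ into signal plus noise via linearity, use subadditivity of $\max$ (and superadditivity of $\min$) to separate the bias term from the penalized noise term, bound the latter coordinatewise by $M^{(r)}/\sqrt{|I|}$, and then optimize over $J$. Nothing to add.
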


\begin{proof}[Proof of Lemma~\ref{lem:determ}]

For any $i \in [n]$ and any $J \in \mathcal{I}_i$ we have
\begin{align*}
&\hat{\theta}^{(r,\lambda)}_i
\leq
\max_{I \in \mathcal{I}_i : I \subseteq J}
\big[(P^{(|I|,r)} y_{I})_i - \tfrac{2\lambda C_{I,J}}{|I|}\big] \\
&=
\max_{I \in \mathcal{I}_i : I \subseteq J}
\big[(P^{(|I|,r)} \theta^*_{I})_i + (P^{(|I|,r)} \epsilon_{I})_i - \tfrac{2\lambda C_{I,J}}{|I|}\big] \\
&\leq
\max_{I \in \mathcal{I}_i : I \subseteq J}
(P^{(|I|,r)} \theta^*_{I})_i
+
\max_{I \in \mathcal{I}_i : I \subseteq J}
\big[(P^{(|I|,r)} \epsilon_{I})_i - \tfrac{2\lambda C_{I,J}}{|I|}\big].
\end{align*}

Therefore,
\begin{align*}\label{eq:1}
\hat{\theta}^{(r,\lambda)}_i - \theta^*_i
&\leq
\max_{I \in \mathcal{I}_i : I \subseteq J}
\big[(P^{(|I|,r)} \theta^*_{I})_i - \theta^*_i\big] \\
&\quad+
\max_{I \in \mathcal{I}_i : I \subseteq J}
\big[(P^{(|I|,r)} \epsilon_{I})_i - \tfrac{2\lambda C_{I,J}}{|I|}\big] \\
&\leq
\max_{I \in \mathcal{I}_i : I \subseteq J}
\big[(P^{(|I|,r)} \theta^*_{I})_i - \theta^*_i\big]
+
\max_{I \in \mathcal{I}_i : I \subseteq J}
\big[\tfrac{M_i^{(r)}}{\sqrt{|I|}} - \tfrac{2\lambda C_{I,J}}{|I|}\big].
\end{align*}

Similarly,
\begin{align*}
&\hat{\theta}^{(r,\lambda)}_i
\geq
\min_{I \in \mathcal{I}_i : I \subseteq J}
\big[(P^{(|I|,r)} y_{I})_i + \tfrac{2\lambda C_{I,J}}{|I|}\big] \\
&=
\min_{I \in \mathcal{I}_i : I \subseteq J}
\big[(P^{(|I|,r)} \theta^*_{I})_i + (P^{(|I|,r)} \epsilon_{I})_i + \tfrac{2\lambda C_{I,J}}{|I|}\big] \\
&\geq
\min_{I \in \mathcal{I}_i : I \subseteq J}
(P^{(|I|,r)} \theta^*_{I})_i
+
\min_{I \in \mathcal{I}_i : I \subseteq J}
\big[(P^{(|I|,r)} \epsilon_{I})_i + \tfrac{2\lambda C_{I,J}}{|I|}\big].
\end{align*}

Hence,
\begin{align*}
\hat{\theta}^{(r,\lambda)}_i - \theta^*_i
&\geq
\min_{I \in \mathcal{I}_i : I \subseteq J}
\big[(P^{(|I|,r)} \theta^*_{I})_i - \theta^*_i\big] \\
&\quad-
\max_{I \in \mathcal{I}_i : I \subseteq J}
\big[\tfrac{M_i^{(r)}}{\sqrt{|I|}} - \tfrac{2\lambda C_{I,J}}{|I|}\big].
\end{align*}

\end{proof}

Given Lemma~\ref{lem:determ}, to prove Proposition~\ref{prop:determ} it now suffices to show that for any interval $J \in \mathcal{I}_i$, we have $\tilde{SE}(i,J,\lambda) \leq SE^{(r)}(i,J,\lambda).$ This is the content of the next lemma.

\begin{lemma}\label{lem:sd}
Fix any $i \in [n]$ and any interval $J \in \mathcal{I}_i.$ Then we have for all $\lambda > 0$,
\begin{equation*}
\underbrace{\max_{I \in \mathcal{I}_i : I \subseteq J} \big[\frac{M_i^{(r)}}{\sqrt{|I|}} - \frac{2\lambda C_{I,J}}{|I|}\big]}_{\tilde{SE}(i,J,\lambda)} \leq \underbrace{\frac{M_i^{(r)}}{\sqrt{Dist(i,\partial J)}} \mathrm{1}(i \notin \{1,n\}) + \frac{M_i^{(r)}}{\sqrt{|J|}} + \frac{\left(M_i^{(r)}\right)^2}{4 \lambda} + \frac{2\lambda}{|J|}}_{SE^{(r)}(i,J,\lambda)}.
\end{equation*}
\end{lemma}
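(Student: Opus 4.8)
The plan is to fix $i\in J$, abbreviate $M:=M^{(r)}\ge 0$, and first dispose of the trivial case $\lambda=0$, for which the right-hand side is $+\infty$; so assume $\lambda>0$. The key structural observation is that the feasible intervals $I$ with $i\in I$ and $I\subseteq J$ split into exactly three classes according to the value of $C_{I,J}$: the singleton class $\{J\}$ (where $C_{I,J}=-1$), the class of $I\neq J$ meeting exactly one of the two endpoints of $J$ (where $C_{I,J}=0$), and the class of $I\subset J$ strictly interior to $J$ (where $C_{I,J}=1$). These three classes are exhaustive because any $I\subseteq J$ that meets both endpoints of $J$ must coincide with $J$. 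The strategy is then to bound the objective $\frac{M}{\sqrt{|I|}}-\frac{\lambda C_{I,J}}{|I|}$ separately over each class and add the three bounds, using that a maximum over the union of the classes is at most the sum of the three class-wise maxima.

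On the class $I=J$ the objective equals $\frac{M}{\sqrt{|J|}}+\frac{\lambda}{|J|}$. On the class $C_{I,J}=0$ the objective equals $\frac{M}{\sqrt{|I|}}$, and any such $I$ contains the entire segment of $J$ joining $i$ to whichever endpoint of $J$ it meets, so $|I|\ge Dist(i,\partial J)$; since $M\ge 0$ this gives the bound $\frac{M}{\sqrt{Dist(i,\partial J)}}$. On the interior class the objective equals $\frac{M}{\sqrt{|I|}}-\frac{\lambda}{|I|}$; substituting $a=1/\sqrt{|I|}$ turns this into the concave quadratic $Ma-\lambda a^2$ in $a$, whose unconstrained maximum over $a\in\R$ is $\frac{M^2}{4\lambda}$ by Lemma~\ref{lem:opt}, so the objective is at most $\frac{(M^{(r)})^2}{4\lambda}$. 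If any of these classes happens to be empty — for instance, the interior class is empty when $i$ lies too close to $\partial J$ — its contribution is simply omitted, which only helps.

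Adding the three class-wise bounds yields
\[
\max_{I\in\mathcal{I}:\, i\in I,\, I\subseteq J}\Big[\tfrac{M^{(r)}}{\sqrt{|I|}}-\tfrac{\lambda C_{I,J}}{|I|}\Big]\le \frac{M^{(r)}}{\sqrt{Dist(i,\partial J)}}+\frac{M^{(r)}}{\sqrt{|J|}}+\frac{(M^{(r)})^2}{4\lambda}+\frac{\lambda}{|J|},
\]
which is exactly the asserted inequality. There is no real analytic difficulty; the only point needing care is the combinatorial bookkeeping — verifying that the three values of $C_{I,J}$ genuinely partition the feasible intervals, and that the smallest admissible cardinality in the $C_{I,J}=0$ case is $Dist(i,\partial J)$ — both of which follow immediately from the interval structure and the definition $Dist(i,\partial J)=\min\{i-j_1+1,\,j_2-i+1\}$ for $J=[j_1:j_2]$.
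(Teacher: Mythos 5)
Your proof is correct and follows essentially the same argument as the paper: decompose the feasible $I$ into the three classes determined by $C_{I,J}\in\{-1,0,1\}$, bound the class with $C_{I,J}=0$ by $M^{(r)}/\sqrt{Dist(i,\partial J)}$ via the minimum admissible cardinality, handle the interior class with the concave quadratic of Lemma~\ref{lem:opt}, and sum the class-wise bounds. The only cosmetic additions are your explicit treatment of $\lambda=0$ and the remark about empty classes, neither of which changes the substance.
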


\begin{proof}[Proof of Lemma~\ref{lem:sd}]
It will be helpful to first solve the optimization problem suggested by the left hand side above. We do this in the following lemma. 
\begin{lemma}(An Optimization Problem)\label{lem:opt}
For a positive integer $N \geq 1$, and $M > 0, \lambda > 0$, consider the optimization problem
\begin{equation*}
OPT(M,\lambda,N) = \max_{1 \leq x \leq N} \big(\frac{M}{\sqrt{x}} - \frac{\lambda}{x}\big).
\end{equation*}

Then, we have
\begin{equation*}
OPT(M,\lambda,N) = 
\begin{cases}
M - \lambda & \text{if} \:\:0 < \lambda < \frac{M}{2} \\
\frac{M^2}{4 \lambda} & \text{if} \:\:\frac{M}{2} \leq \lambda < \frac{M}{2} \sqrt{N} \\
\frac{M}{\sqrt{N}} - \frac{\lambda}{N} \leq \frac{M}{2 \sqrt{N}}  & \text{if} \:\:\frac{M}{2} \sqrt{N} \leq \lambda.
\end{cases}
\end{equation*}

Also, for any fixed $M,\lambda,N$ we have 

\begin{equation*}
OPT(M,\lambda,N) \leq \frac{M^2}{4 \lambda}.
\end{equation*}
\end{lemma}

\begin{proof}[Proof of Lemma~\ref{lem:opt}]
We can write
\begin{equation*}
OPT(M,\lambda,N) = \max_{1 \leq x \leq N} \big(\frac{M}{\sqrt{x}} - \frac{\lambda}{x}\big) = \max_{\frac{1}{\sqrt{N}} \leq a \leq 1} \big(Ma - \lambda a^2\big).
\end{equation*}

So we are simply maximizing a concave quadratic in an interval. The roots of the quadratic are $0$ and $\frac{M}{\lambda}$ and the global maximizer of the quadratic is at $\frac{M}{2 \lambda}.$ This means there are three cases to consider.

\begin{enumerate}
\item $\frac{M}{2 \lambda} > 1$:  This is the case when the global max is larger than $1.$ In this case the maximizer is at $1$ and the value is $M - \lambda.$

\item $\frac{1}{\sqrt{N}} \leq \frac{M}{2 \lambda} \leq 1$: This is the case when the global max is inside the feasible interval. The maximizer is the global max and the value is $\frac{M^2}{4 \lambda}.$

\item $\frac{1}{\sqrt{N}} > \frac{M}{2 \lambda}$: This is the case when the global max is smaller than the smallest feasible value. In this case, the maximizer is at the smallest feasible value which is $\frac{1}{\sqrt{N}}$ and the value is $\frac{M}{\sqrt{N}} - \frac{\lambda}{N}.$ 
\end{enumerate} 

The second display simply follows from the fact that 
$$ OPT(M,\lambda,N) \leq \max_{0 \leq a} \big(Ma - \lambda a^2\big).$$

The proof is finished. 
\end{proof}

We are now ready to finish the proof of Lemma~\ref{lem:sd}. We will need to consider different cases.

\begin{enumerate}
    \item $J \subseteq [2:(n - 1)]$ is an interior interval. In this case there are three values $C_{IJ}$ can take depending on $I$. We note that 
    when $I \subset J$ we have $C_{IJ} = 1$ and we can use Lemma~\ref{lem:opt} (with $\lambda$ replaced by $2 \lambda$); for the case $C_{IJ} = 0$ we have $|I| \geq Dist(i,\partial J)$. Thus we get the bound 
    \begin{equation*}
        \tilde{SE}(i,J,\lambda) \leq \max\{\frac{M_i^{(r)}}{\sqrt{Dist(i,\partial J)}},\frac{(M_i^{(r)})^2}{8 \lambda}, \frac{M_i^{(r)}}{\sqrt{|J|}} + \frac{2\lambda}{|J|}\}.
    \end{equation*}

    \item $J = [1:n].$ In this case, similarly by considering different subcases we can obtain
    \begin{equation*}
        \tilde{SE}(i,J,\lambda) \leq \max \{\frac{(M_i^{(r)})^2}{4 \lambda}, \frac{M_i^{(r)}}{\sqrt{|J|}}\}.
    \end{equation*}

\item $J = [j_1:j_2]$ where $1 = j_1 \leq j_2 < n.$
In this case, one can check by going through the four different subcases, 
    \begin{equation*}
        \tilde{SE}(i,J,\lambda) \leq \max \{\frac{(M_i^{(r)})^2}{4 \lambda}, \frac{M_i^{(r)}}{\sqrt{|J|}} + \frac{\lambda}{|J|}, M_i^{(r)} \mathrm{1}\{i = 1\}\}.
    \end{equation*}

\item $J = [j_1:j_2]$ where $1 < j_1 \leq j_2 = n.$
This case is similar to the above case. 
\end{enumerate}

We can combine the above three displays and finish the proof of Lemma~\ref{lem:sd} and hence of Proposition~\ref{prop:determ}. 
\end{proof}


\subsubsection{Proof of Proposition~\ref{prop:noise}}

\begin{proof}[Proof of Proposition~\ref{prop:noise}]

Fix any interval $I \in \mathcal{I}_i.$ Note that for any fixed $i \in I,$ we can write $(P^{(|I|,r)} \epsilon_{I})_i = \sum_{j \in I} P^{(|I|,r)}_{ij} \epsilon_j$ as a linear combination of $\{\epsilon_j; j \in I\}$, therefore it will be subgaussian. The subgaussian norm squared will be at most the sum of squares of the coefficients $\sum_{j \in I} \left(P^{(|I|,r)}_{ij}\right)^2.$ Now note that 
\begin{equation*}
\sum_{j \in I} \left(P^{(|I|,r)}_{ij}\right)^2 = \sum_{j \in I} P^{(|I|,r)}_{ij} P^{(|I|,r)}_{ji} = \left(P^{(|I|,r)}\right)^2_{ii} = P^{(|I|,r)}_{ii}.
\end{equation*}
In the first equality we used the symmetry of the orthogonal projection matrix $P^{(|I|,r)}$ and in the last equality we used the fact that $P^{(|I|,r)}$ is idempotent.

Now, we claim that there exists a constant $c_r > 0$ only depending on $r$ such that 
$$P^{(|I|,r)}_{ii} \leq \frac{c_r}{|I|}.$$

This claim is a property about the subspace of discrete polynomials and is stated and proved in a stand alone Proposition~\ref{prop:poly}.

The above claim implies that for any $I$ containing $i$, the random variable $\sqrt{|I|} (P^{(|I|,r)} \epsilon_{I})_i$ is Subgaussian with subgaussian norm bounded by $\sigma$ times a constant $c_r$  only depending on $r.$ Applying Lemma~\ref{lem:subgmax} (see below) to the collection
$\Big\{\sqrt{|I|}(P^{(|I|,r)}\epsilon_I)_i : I\in \mathcal{I}_i\Big\}$
with $m=|\mathcal{I}_i|$, $w = c \log n$ for full MTF and $w = c \log \log n$ for DSMTF, yields the stated bound on $M_i^{(r)}$. 
\end{proof}

\begin{lemma}[Max of finitely many subgaussians]\label{lem:subgmax}
Let $X_1,\dots,X_m$ be random variables such that each $X_k$ is mean-zero
subgaussian with variance proxy at most $v^2$, i.e.
\[
\mathbb{E}\exp(tX_k)\le \exp\!\Big(\frac{v^2 t^2}{2}\Big)
\qquad \forall\, t\in\mathbb{R},\ \forall\,k\in[m].
\]
Then for any $u>0$,
\[
\mathbb{P}\!\left(\max_{1\le k\le m}|X_k|\ge u\right)
\le 2m\exp\!\Big(-\frac{u^2}{2v^2}\Big).
\]

Consequently, for any $w > 0$, with probability at least $1- \exp(-w)$ we have 
\[
\max_{1\le k\le m}|X_k|
\le v\sqrt{2\log(2m)+2w}.
\]
\end{lemma}



\begin{proof}
Fix $k$. By the subgaussian mgf bound, a standard Chernoff argument gives
\[
\mathbb{P}(X_k\ge u)\le \exp\!\Big(-\frac{u^2}{2v^2}\Big),
\qquad
\mathbb{P}(X_k\le -u)\le \exp\!\Big(-\frac{u^2}{2v^2}\Big).
\]
Hence $\mathbb{P}(|X_k|\ge u)\le 2\exp(-u^2/(2v^2))$. A union bound over
$k\in[m]$ yields
\[
\mathbb{P}\!\left(\max_{1\le k\le m}|X_k|\ge u\right)
\le \sum_{k=1}^m \mathbb{P}(|X_k|\ge u)
\le 2m\exp\!\Big(-\frac{u^2}{2v^2}\Big).
\]
\end{proof}

\end{proof}

	\section{A Fact about Discrete Polynomials}\label{sec:fact1}

\begin{proposition}\label{prop:poly}
Fix an integer $r \geq 0$. For any positive integer $m$, define $I = [m].$ Define the (Vandermonde) matrix $B \in \R^{m \times (r + 1)}$ obtained by stacking together columns 
$$B = \left(b_0:b_1:\dots:b_r\right)$$
where for each $j \in [0:r]$ we define 
$$b_j = (1^j,2^j,\dots,m^{j})^{T}.$$
We call $b_j$ as the (discrete) polynomial vector of degree $j$ on $I.$  Define $P^{(r)}$ to be the orthogonal projection matrix on to the subspace $S^{(r)}$ of $r$th degree polynomials or more precisely, $$S^{(r)} = Span(b_0,\dots,b_r).$$ Then there exists a constant $C_r > 0$ only depending on $r$ such that 
\begin{equation*}
\|Diag(P^{(r)})\|_{\infty} \leq \frac{C_r}{m}.
\end{equation*}

\end{proposition}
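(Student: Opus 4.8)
The plan is to reduce the discrete statement to a continuous one about orthonormal polynomials on an interval, where classical facts are available. First I would note that $P^{(r)}_{ii}$ equals the squared norm of the $i$th row of any matrix whose columns form an orthonormal basis of $S^{(r)}$; equivalently, if $q_0,\dots,q_r$ is an orthonormal basis of $S^{(r)}$ with respect to the standard inner product on $\R^m$, then
\begin{equation*}
P^{(r)}_{ii} = \sum_{j=0}^{r} q_j(i)^2,
\end{equation*}
thinking of each $q_j$ as a polynomial of degree $j$ evaluated at the grid point $i$. So it suffices to bound each $q_j(i)^2$ by $C_r/m$ uniformly in $i \in [m]$.

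Next I would pass to the rescaled grid $x_i = i/m \in (0,1]$ and compare the discrete inner product $\langle f,g\rangle_m = \sum_{i=1}^m f(x_i)g(x_i)$ with $m$ times the continuous inner product $\int_0^1 f(x)g(x)\,dx$ on the space of polynomials of degree $\le 2r$ (which is all we need, since products $q_j q_k$ have degree $\le 2r$). On this finite-dimensional space the two bilinear forms are uniformly equivalent for $m$ large: the discrete form is a Riemann sum for the continuous one, and a standard argument (Gram matrices in the monomial basis converge, and the limiting continuous Gram matrix is positive definite) gives constants $c_r, C_r>0$, depending only on $r$, with $c_r m \langle f,f\rangle_{L^2[0,1]} \le \langle f,f\rangle_m \le C_r m \langle f,f\rangle_{L^2[0,1]}$ for all polynomials $f$ of degree $\le r$ (small $m \le m_0(r)$ can be absorbed into the constant since there the bound $P^{(r)}_{ii}\le 1 \le (r+1)/m$ is trivial, using $m \le r+1$ or direct inspection). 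Consequently an $\langle\cdot,\cdot\rangle_m$-orthonormal basis $q_j$ of degree-$r$ polynomials is, after dividing by $\sqrt{m}$, uniformly bounded in $L^2[0,1]$ norm by a constant depending only on $r$; write $q_j = \sqrt{m}\, p_j$ where $\|p_j\|_{L^2[0,1]} \le C_r$.

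Then I would invoke a standard fact from approximation theory: on a space of polynomials of degree $\le r$ on $[0,1]$, all norms are equivalent with constants depending only on $r$, so $\|p_j\|_{\infty,[0,1]} \le C_r \|p_j\|_{L^2[0,1]} \le C_r$. Therefore $q_j(x_i)^2 = m\, p_j(x_i)^2 \le C_r m$... wait, that is the wrong direction, so let me restate: we have $q_j(i)^2 = m\, p_j(x_i)^2$, hence $\sum_j q_j(i)^2 \le m \cdot (r+1) C_r^2$ — this overshoots. The fix is that the correct normalization gives $p_j$ of $L^2$-norm $\asymp 1/\sqrt{m}$, not $O(1)$: indeed $\langle q_j, q_j\rangle_m = 1$ forces $\langle p_j,p_j\rangle_{L^2} \asymp 1/m$. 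So $\|p_j\|_{\infty} \le C_r \|p_j\|_{L^2} \le C_r/\sqrt{m}$, giving $q_j(i)^2 = m\, p_j(x_i)^2 \le C_r$... still off by the factor $m$. The clean way to organize this, which I would actually write, is to keep everything discrete: let $\tilde q_j$ be the $\langle\cdot,\cdot\rangle_m$-orthonormal basis, and bound $\tilde q_j(i)^2$ directly via $\tilde q_j(i)^2 \le \|\tilde q_j\|_\infty^2$ and a Nikolskii-type inequality $\|\tilde q_j\|_\infty^2 \le \frac{C_r}{m}\|\tilde q_j\|_m^2 = \frac{C_r}{m}$ for degree-$\le r$ polynomials sampled on an $m$-point equispaced grid. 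Summing over $j \in [0:r]$ yields $P^{(r)}_{ii} = \sum_j \tilde q_j(i)^2 \le \frac{(r+1)C_r}{m}$, as desired.

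The main obstacle, and the one step deserving care, is the discrete Nikolskii inequality $\|f\|_{\infty} \le \sqrt{C_r/m}\,\|f\|_m$ for polynomials of degree $\le r$ on the equispaced $m$-grid, with $C_r$ independent of $m$. I would prove it by the Gram-matrix comparison sketched above: express $f$ in the monomial basis, use that the discrete Gram matrix $G_m$ (entries $\frac1m\sum_i x_i^{a+b}$) converges entrywise as $m\to\infty$ to the Hilbert-type matrix $(\frac{1}{a+b+1})_{a,b=0}^{r}$ which is positive definite, so $\lambda_{\min}(G_m) \ge c_r > 0$ for all $m \ge m_0(r)$; then $\|f\|_m^2 = m\, v^\top G_m v \ge c_r m \|v\|^2$ where $v$ is the coefficient vector, while $\|f\|_\infty \le \sum_{a} |v_a| \le \sqrt{r+1}\,\|v\|$ since $|x_i|\le 1$. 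Combining gives $\|f\|_\infty^2 \le (r+1)\|v\|^2 \le \frac{r+1}{c_r m}\|f\|_m^2$. Finitely many small $m < m_0(r)$ are handled trivially. This completes the argument.
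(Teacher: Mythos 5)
Your final argument is correct and takes a genuinely simpler route than the paper's. Both start from the same identity $P^{(r)}_{ii} = \sum_{j=0}^{r} q_j(i)^2$ for an orthonormal basis of $S^{(r)}$, but from there the paths diverge. The paper runs Gram--Schmidt to get vectors $\tilde b_j$ of exact degree $j$, then proves a matched pair of lemmas — a lower bound $\|\tilde b_j\|^2 \geq c_r m^{2j+1}$ via a Gram-matrix/Schur-complement argument (Lemma~\ref{lem:poly1}) and an upper bound $\|\tilde b_j\|_\infty \leq c_r m^{j}$ via continuous Legendre polynomials and a Riemann-sum estimate (Lemma~\ref{lem:poly2}) — and combines them term by term. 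You instead fold everything into one discrete Nikolskii-type inequality $\|f\|_\infty^2 \leq \tfrac{C_r}{m}\|f\|_m^2$ for degree $\le r$ polynomials on the equispaced grid, proved by the same moment-Gram-matrix convergence that underlies the paper's Lemma~\ref{lem:poly1}, but paired with the elementary coefficient bound $\|f\|_\infty \le \sum_a |v_a| \le \sqrt{r+1}\,\|v\|$ in place of the paper's Legendre machinery. This buys you a shorter proof that avoids Gram--Schmidt bookkeeping, Schur complements, and Legendre polynomials entirely; what you lose is the degree-graded norm information that the paper's two lemmas provide separately and reuses later (in the proof of Lemma~\ref{lem:approxpoly}), although your Nikolskii bound would also suffice there since it directly gives $\|v_j\|_\infty \le C_r/\sqrt{n}$ for orthonormal $v_j$.

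Two small cleanup notes on your write-up. The middle passage where you introduce $q_j = \sqrt{m}\,p_j$ and chase the scaling back and forth is internally confused (the $\sqrt{m}$ rescaling is spurious) — you recognize this yourself and the final ``keep everything discrete'' version is the one to keep; I'd delete the detour. Second, your remark that the small-$m$ case follows ``using $m \le r+1$'' is only the degenerate case where $S^{(r)} = \R^m$ and $P^{(r)}=I$; for $r+1 < m < m_0(r)$ you still need the absorption-into-the-constant argument $P^{(r)}_{ii} \le 1 \le m_0(r)/m$, which you do mention but should state as the operative bound.
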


\begin{proof}
It suffices to prove the statement for $m > r.$
Let the vectors $\tilde{b}_0,\dots,\tilde{b}_{r}$ be an orthogonal basis of $S^{(r)}$ obtained by performing Gram Schmidt orthogonalization on the ordered set $\{b_0,\dots,b_r\}.$ We can think of $\tilde{b}_0,\dots,\tilde{b}_{r}$ as a set of (discrete) orthogonal polynomials, infact these can be thought of as (discrete) Legendre polynomials. We can now write the orthogonal projection matrix $P^{(r)}$ as follows:
$$P^{(r)} = \sum_{j = 0}^{r} \frac{\tilde{b}_j  \tilde{b}_j^{T}}{\|\tilde{b_j}\|^2}.$$

Fix an $i \in [m]$ and we can write the $i$ th diagonal element of $P^{(r)}$ as
\begin{equation*}
P^{(r)}_{ii} = e_i^{T} P^{(r)} e_i = \sum_{j = 0}^{r} \frac{(e_i^{T} \tilde{b}_j)^2}{\|\tilde{b_j}\|^2}.
\end{equation*}

In the above, $e_i$ is the $i$th canonical basis vector in $\R^m.$

The following two lemmas will now finish the proof. 

\begin{lemma}\label{lem:poly1}
Fix non negative integers $r$ and $m > r.$ There exists a positive constant $c_r$ only depending on $r$ such that for each $0\le j\le r$,
\begin{equation}\label{eq:poly1}
\|\tilde{b_j}\|^2 \geq c_{r} m^{2j + 1}.
\end{equation}
\end{lemma}

\begin{lemma}\label{lem:poly2}
Fix non negative integers $r$ and $m.$ There exists a positive constant $c_r$ only depending on $r$ such that such that for each $0\le j\le r$, 
\begin{equation}\label{eq:poly2}
\|\tilde{b}_j\|_{\infty} \leq c_{r} m^{j}.
\end{equation}
\end{lemma}

\end{proof}

Now we give proofs of both these lemmas. Within these proofs $c_r$ will denote a generic positive constant only depending on $r$ and whose exact value might change from line to line.

\begin{proof}[Proof of Lemma~\ref{lem:poly1}]
If $j = 0$, then $\tilde{b}_j = b_j$ and there is nothing to prove since $b_0$ is the all $1$  vector. So fix any $j \in [r].$ Note that since we are performing Gram Schmidt orthogonalization, we can write $\tilde{b}_j$ as a linear combination of $b_0,b_1,\dots,b_j$ where the coefficient of $b_j$ is $1$, i.e,
\begin{equation*}
\tilde{b}_j = a_0 b_0 + a_1 b_1 + \dots + a_{j - 1} b_{j - 1} + a_j b_j
\end{equation*}  
where $a_j = 1.$ Therefore, we can write 
\begin{align*}
&\|\tilde{b_j}\|^2 = \sum_{i = 1}^{m} \left(a_0 + a_1 i + a_2 i^2 + \dots + a_j i^{j}\right)^2 = \sum_{i = 1}^{m} \sum_{u = 0}^{j} \sum_{v = 0}^{j} a_u i^{u} a_v i^{v} = \\& \sum_{u = 0}^{j} \sum_{v = 0}^{j} \underbrace{a_u m^{u + 1/2}}_{x_u} \underbrace{a_v m^{v + 1/2}}_{x_v} \underbrace{\left(\frac{1}{m} \sum_{i = 1}^{m} (\frac{i}{m})^{u + v}\right)}_{Q_{uv}} = \\&
x^{T} Q x. 
\end{align*}

In the above step, we wrote $\|\tilde{b_j}\|^2$ as a quadratic form in a vector $x = (x_0,\dots,x_j) \in \R^{j + 1}.$

It will help to think of $Q$ in the block matrix form as follows. 

\[
Q=
\left[
\begin{array}{c|c}
Q_{11} & Q_{12} \\
\hline
Q_{21} & Q_{22}
\end{array}
\right]
\]

where $Q_{11} = Q_{[0:(j - 1),0:(j - 1)]} \in \R^{j \times j}$ and $Q_{22} = Q_{jj} \in \R.$
We can now write 
\begin{equation*}
x^{T} Q x = y^{T} Q_{11} y + 2 y^{T} Q_{12} x_j + x_j^2 Q_{jj}
\end{equation*}
where $y = x[0:(j - 1)].$

We now claim that $Q$ is strictly positive definite, we will prove this at the end. This will imply that its leading principal minor $Q_{11}$ is also strictly positive definite. Thus, viewing $x^{T} Q x$ as a function of $y$ as above (keeping $x_j$ fixed), we see that it is a strongly convex function of $y$ (since $Q_{11}$ is positive definite) and hence has a unique minima. By differentiating and solving for $y$, it can be checked that $y^* = -Q_{11}^{-1} Q_{12} x_j$ is the minima and the minimum value is $x_j^2 \left(Q_{jj} - Q_{21} Q_{11}^{-1} Q_{12}\right).$ This gives us the lower bound 
\begin{equation*}
x^{T} Q x \geq x_j^2 \left(Q_{jj} - Q_{21} Q_{11}^{-1} Q_{12}\right).
\end{equation*}

Note that $x_j^2 = a_j^{2} m^{2j + 1} = m^{2j + 1}$ since $a_j = 1.$ Therefore, to show~\eqref{eq:poly1} it suffices to show that 

\begin{equation}\label{eq:polyreq1}
\left(Q_{jj} - Q_{21} Q_{11}^{-1} Q_{12}\right) \geq c_r > 0.
\end{equation}

Now, using linear algebra terminology, $\left(Q_{jj} - Q_{21} Q_{11}^{-1} Q_{12}\right)$ is the Schur complement of $Q_{11}$ and using the well known block matrix inversion formula we obtain 

\begin{equation*}
(Q^{-1})_{jj} = \frac{1}{Q_{jj} - Q_{21} Q_{11}^{-1} Q_{12}}
\end{equation*}

Moreover, we also have 

\begin{equation*}
(Q^{-1})_{jj} \leq \lambda_{max} (Q^{-1}) = \frac{1}{\lambda_{min} (Q) }.
\end{equation*}

where $\lambda_{max},\lambda_{min}$ denote the maximum and minimum eigenvalue respectively. Therefore, to show~\eqref{eq:polyreq1}, it suffices to show that for all $m \geq 1$,

\begin{equation}\label{eq:polyreq2}
\lambda_{min} (Q) \geq c_r > 0.
\end{equation}

Let $U_m$ be a discrete random variable uniform on the set $\{\frac{1}{m},\dots,\frac{m}{m}\}$ and $U$ denote a $U(0,1)$ random variable. Then, we have $U_m$ converging to $U$ weakly; i.e, 
\begin{equation*}
U_m \xrightarrow[m \to \infty]{{\rm law}} U.
\end{equation*}

Note that $Q$ is product moment matrix of the random vector $U_m^{(vec)}= (U_m^{1},\dots,U_m^{j}).$ That is, 
$$Q_{uv} = \E\: U_m^{u} U_m^{v}.$$

Define $Q^{pop}$ to be the population version of $Q$; more precisely, define
$$Q^{POP}_{uv} = \E\: U^{u} U^{v}.$$

By the continuous mapping theorem, we can conclude that $$Q \xrightarrow[m \to \infty]  \:Q^{pop}.$$

Since $\lambda_{min}$ is a continuous function on the space of positive definite matrices, we further can write

\begin{equation*}
\lambda_{min} (Q) \xrightarrow[m \to \infty] \:\lambda_{min}(Q^{pop}).
\end{equation*}

Now we claim that $Q^{pop}$ is positive definite and hence	there exists a constant $c_r > 0$ such that $\lambda_{min}(Q^{pop}) > c_r.$ Therefore, there exists a positive integer $M \geq 1$ such that $\lambda_{min} (Q) \geq \frac{c_r}{2}$ for all $m \geq M.$ Combined with the fact that $\lambda_{min}(Q) > 0$ for all $m > r$, this proves~\eqref{eq:polyreq2} and in turn proves~\eqref{eq:polyreq1} which in turn proves~\eqref{eq:poly1}.

All that remains is to show that $Q^{pop}$ is positive definite and so is $Q$ for all $m > r.$

Take any vector $v \in \R^{j + 1}$ and consider the quadratic form $v^{T} Q^{pop} v.$ Suppose

\begin{equation*}
v^{T} Q^{pop} v = E \sum_{u = 0}^{j} \sum_{l = 0}^{j} v_u v_l U^{u + l} =  \E \left(\sum_{u = 0}^{j} v_u U^u \right)^2 = 0
\end{equation*}

This implies that the random variable $\sum_{u = 0}^{j} v_u U^u = 0$ almost surely. If any of the $v_u$'s are non zero then the above is a polynomial of degree at most $j$ and hence cannot be $0$ almost surely in $U.$ Therefore, it has to be the case that the vector $v$ is zero. This shows that $Q^{pop}$ is positive definite.

Similarly, suppose 
\begin{equation*}
v^{T} Q v =  \E \left(\sum_{u = 0}^{j} v_u U_m^u \right)^2 = 0
\end{equation*}

The above means that the polynomial $p(x) = \sum_{u = 0}^{j} v_u x^{u}$ has atleast $m$ roots $\{\frac{1}{m},\dots,\frac{m}{m}\}.$ However, $p(x)$ is a polynomial of degree $j \leq r.$ Therefore, if $m > r$ then this is a contradiction unless $v$ is the zero vector. This shows that if $m > r$, then $Q$ is also positive definite.

\end{proof}

\begin{proof}[Proof of Lemma~\ref{lem:poly2}]

If $j=0$, then $\tilde b_0=b_0=\mathbf 1$, so $\|\tilde b_0\|_\infty=1\le c_r$ and
there is nothing to prove. Fix any $j\in[r]$. Let $P^{(j-1)}$ denote the orthogonal
projection onto $\mathrm{span}(b_0,\dots,b_{j-1})$. By Gram--Schmidt,
$\tilde b_j = b_j - P^{(j-1)} b_j$. Therefore, by the triangle inequality,
\[
\|\tilde b_j\|_{\infty} \le \|b_j\|_{\infty} + \|P^{(j-1)} b_j\|_{\infty}.
\]
It is immediate that $\|b_j\|_\infty = m^j$. Hence it suffices to show that
$\|P^{(j-1)} b_j\|_\infty \le c_r m^j$.

Next, note that $\|b_j\|_2^2 \le c_r m^{2j+1}$, and therefore
$\|P^{(j-1)} b_j\|_2 \le \|b_j\|_2 \le c_r m^{j+1/2}$. Define
\[
v := \frac{P^{(j-1)} b_j}{\|P^{(j-1)} b_j\|_2}.
\]
It suffices to show that
\begin{equation}\label{eq:polyreq3}
  \|v\|_\infty \le \frac{c_r}{\sqrt m}.
 \end{equation}

Let $(L_0,\dots,L_r)$ be the orthonormal Legendre polynomials on $[0,1]$, so that
$\int_0^1 L_a(x)L_b(x)\,dx = \mathbf 1(a=b)$ for $0\le a,b\le r$. Moreover, for fixed
$r$ there is a constant $c_r<\infty$ such that
\[
\max_{0\le a\le r}\bigl\{\|L_a\|_\infty,\ \|L_a'\|_\infty\bigr\}\le c_r.
\]
Since $\{L_0,\dots,L_{j-1}\}$ is a basis of the space of polynomials of degree at most $j-1$,
there exists an invertible matrix $A\in\mathbb R^{j\times j}$ such that
$(L_0(x),\dots,L_{j-1}(x))=(1,x,\dots,x^{j-1})A$. Evaluating at the grid points
$x_i=i/m$ shows that the column space of the monomial design matrix $B$ equals the column
space of the $m \times j$ matrix $(L_0(x_i),\dots,L_{j-1}(x_i))$.

Therefore, we can write for each $i \in [m]$,

\begin{equation*}
v_i = \sum_{u = 0}^{j - 1} a_u L_{u}(x_i).
\end{equation*}

Note that
\begin{equation*}
\|v\|_{\infty} \leq \left(\max_{0 \leq u \leq (j - 1)} |a_u|\right) \sum_{u = 0}^{j - 1} \|L_u\|_{\infty}
\end{equation*}

and hence 
\begin{equation*}
\|v\|_{\infty} \leq c_r \left(\max_{0 \leq u \leq (j - 1)} |a_u|\right) \leq c_r \sqrt {\sum_{u = 0}^{j - 1} a_u^{2}}.
\end{equation*}

Therefore, to show~\eqref{eq:polyreq3} it suffices to show 
\begin{equation}\label{eq:polyreq4}
\sum_{u = 0}^{j - 1} a_u^{2} \leq \frac{c_r}{m}.
\end{equation}

Define the function
\[
f(x) := \sum_{u=0}^{j-1} a_u L_u(x).
\]
Since $\{L_u\}_{u=0}^{j-1}$ are orthonormal in $L^2([0,1])$, we have
\[
\int_0^1 f(x)^2\,dx = \sum_{u=0}^{j-1} a_u^2.
\]

By construction, $v_i = f(x_i)$ for $x_i=i/m$, and $v$ is a unit vector in
$\mathbb R^m$. Hence
\[
\frac{1}{m}\sum_{i=1}^m f(x_i)^2
= \frac{1}{m}\sum_{i=1}^m v_i^2
= \frac{1}{m}.
\]

Therefore,
\[
\Bigg|\sum_{u=0}^{j-1} a_u^2 - \frac{1}{m}\Bigg|
=
\Bigg|\int_0^1 f(x)^2\,dx - \frac{1}{m}\sum_{i=1}^m f(x_i)^2\Bigg|.
\]

Let $x_0=0$. Writing the difference as a Riemann sum error,
\[
\Bigg|\int_0^1 f(x)^2\,dx - \frac{1}{m}\sum_{i=1}^m f(x_i)^2\Bigg|
=
\Bigg|\sum_{i=1}^m \int_{x_{i-1}}^{x_i}
\bigl(f(x)^2 - f(x_i)^2\bigr)\,dx\Bigg|.
\]
By the mean value theorem,
\[
\le \sum_{i=1}^m \int_{x_{i-1}}^{x_i}
\|(f^2)'\|_\infty |x-x_i|\,dx
\le \|(f^2)'\|_\infty \sum_{i=1}^m \frac{1}{m^2}
= \frac{1}{m}\|(f^2)'\|_\infty.
\]

Moreover,
\begin{align*}
\|(f^2)'\|_{\infty}
&= \Big\|\sum_{u=0}^{j-1}\sum_{v=0}^{j-1} a_u a_v (L_uL_v)'\Big\|_\infty \\
&\le \sum_{u=0}^{j-1}\sum_{v=0}^{j-1} |a_u||a_v|\,\|(L_uL_v)'\|_\infty.
\end{align*}
Since $(L_uL_v)' = L_u' L_v + L_u L_v'$ and
$\max_{0\le u\le r}\{\|L_u\|_\infty,\|L_u'\|_\infty\}\le c_r$, we have
$\|(L_uL_v)'\|_\infty \le c_r$ for all $0\le u,v\le j-1$. Hence,
\[
\|(f^2)'\|_\infty \le c_r\Big(\sum_{u=0}^{j-1} |a_u|\Big)^2
\le c_r \sum_{u=0}^{j-1} a_u^2,
\]
where the last inequality uses Cauchy--Schwarz and that $j\le r$ is fixed (so constants
depending on $j$ can be absorbed into $c_r$).

Therefore, combining with the previous display yields
\[
\Big|\sum_{u=0}^{j-1} a_u^2 - \frac{1}{m}\Big|
\le \frac{c_r}{m}\sum_{u=0}^{j-1} a_u^2.
\]
Let $S:=\sum_{u=0}^{j-1} a_u^2$. Then $S-\frac1m \le \frac{c_r}{m}S$, i.e.
\[
S\Big(1-\frac{c_r}{m}\Big)\le \frac1m.
\]
Thus for all $m>c_r$ we have $S \le \frac{1}{m(1-c_r/m)} \le \frac{2}{m}$, and hence this proves \eqref{eq:polyreq4}.
\end{proof}

    \section{An Approximation Result for Bounded Variation Sequences}\label{sec:fact2}

We prove the following proposition about approximation of a bounded variation vector by a piecewise polynomial vector. 
\begin{proposition}\label{prop:piecewise}

Fix an integer $r \geq 1$ and $\theta \in \R^n$, and let $\TV^{(r)}(\theta) \coloneqq V.$ For any $\delta > 0$, there exists an interval partition $\pi$ of $[n]$ such that 
\newline
a) $\TV^{(r)}(\theta_{I}) \leq V \delta \:\:\:\: \forall I \in \pi$, \newline
b)\:For any $i \in [n]$, we have $$
\max\{|Bias_{+}^{(r - 1)}(i,J_i,\theta)|,|Bias_{-}^{(r - 1)}(i,J_i,\theta)|\} \leq C_r V \delta$$ where $J_i$ is the interval within the partition $\pi$
which contains $i$,\newline
c) $|\pi| \leq C_r \delta^{-1/r}.$
\newline
d) There exist absolute constants $0<c_1\le c_2$ such that for any integer $\ell\ge 0$,
\begin{equation*}
    \bigl|\{I\in\pi: c_1\tfrac{n}{2^\ell}\le |I|\le c_2\tfrac{n}{2^\ell}\}\bigr|
\le C_r \min\Big\{\tfrac{2^{-\ell(r-1)}}{\delta},\,2^\ell\Big\}.
\end{equation*}

\end{proposition}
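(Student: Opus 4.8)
The plan is to isolate two ingredients: a discrete Whitney-type inequality that converts the variation bound (a) into the bias bound (b) for \emph{any} interval partition, and a recursive construction of $\pi$ itself delivering (a), (c) and (d).

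First I would establish that for every interval $I\subseteq[n]$ and every $i\in I$ one has $\big|(P^{(r-1,|I|)}\theta_I)_i-\theta_i\big|\le C_r\,\TV^{(r)}(\theta_I)$. The proof is a discrete Taylor expansion of $\theta_I$ about the left endpoint of $I$: write $\theta_I=q+\rho$ with $q\in S^{(I,r-1)}$ the degree-$(r-1)$ polynomial matching the first $r$ forward differences of $\theta_I$, and $\rho$ the remainder, which is a convolution of $D^{(r)}\theta_I$ against the binomial coefficients $\binom{\cdot}{r-1}$ and hence satisfies $\|\rho\|_\infty\le|I|^{r-1}|D^{(r)}\theta_I|_1=\TV^{(r)}(\theta_I)$. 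Since $P^{(r-1,|I|)}q=q$, we get $(P^{(r-1,|I|)}\theta_I)_i-\theta_i=(P^{(r-1,|I|)}\rho)_i-\rho_i$, bounded by $\|\rho\|_\infty(1+\|P^{(r-1,|I|)}\|_{\infty\to\infty})$. The operator norm $\|P^{(r-1,m)}\|_{\infty\to\infty}$ is the maximal row $\ell_1$-norm of $P^{(r-1,m)}=\sum_{k=0}^{r-1}\tilde b_k\tilde b_k^{\top}/\|\tilde b_k\|^2$, and using the bounds $\|\tilde b_k\|^2\ge c_r m^{2k+1}$ and $\|\tilde b_k\|_\infty\le c_r m^k$ from Lemmas~\ref{lem:poly1} and~\ref{lem:poly2}, each summand contributes $O_r(1)$, so this norm is $\le C_r$ uniformly in $m$. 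Finally, since $D^{(r)}\theta_I$ is a subvector of $D^{(r)}\theta_J$ and $|I|\le|J|$ whenever $I\subseteq J$, we have $\TV^{(r)}(\theta_I)\le\TV^{(r)}(\theta_J)$; combining with the displayed inequality and maximizing/minimizing over $I\subseteq J_i$ yields (b) from (a).

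For the construction of $\pi$ when $r\ge2$, I would use recursive midpoint bisection: starting from $[n]$, whenever an interval $I$ has $\TV^{(r)}(\theta_I)>V\delta$ split it into its two near-equal halves and recurse, otherwise keep $I$ as a block of $\pi$. This terminates ($\TV^{(r)}(\theta_I)=0$ once $|I|\le r$), and (a) holds by construction. Set $W=|D^{(r)}\theta|_1=V/n^{r-1}$. At depth $d$ the surviving intervals are disjoint and of length $\asymp n/2^d$, so the subvectors $D^{(r)}\theta_I$ have disjoint supports and $\sum_{I\text{ at depth }d}|D^{(r)}\theta_I|_1\le W$; a non-terminal such $I$ satisfies $|I|^{r-1}|D^{(r)}\theta_I|_1>n^{r-1}\delta W$, i.e.\ $|D^{(r)}\theta_I|_1>(n/|I|)^{r-1}\delta W$, which is $\gtrsim 2^{d(r-1)}\delta W$, so there are at most $C_r2^{-d(r-1)}/\delta$ of them — and also at most $2^d$ trivially. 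Since every block at depth $d$ is a child of a non-terminal interval at depth $d-1$, the number of blocks of $\pi$ at depth $d$ is $\le C_r\min\{2^{-d(r-1)}/\delta,\,2^d\}$; reading depth as length scale (routine even for $n$ not a power of $2$) gives (d), and summing over $d$ — a geometric tail past the crossover $2^d\asymp\delta^{-1/r}$ — gives $|\pi|\le C_r\delta^{-1/r}$, which is (c). For $r=1$, midpoint bisection would cost a spurious logarithm in (c) (consider a single concentrated first difference), so I would instead use the greedy left-to-right scan: grow the current block one index at a time, closing it just before $\TV^{(1)}$ would exceed $V\delta$. Then (a) holds by construction, consecutive blocks consume disjoint chunks of $|D^{(1)}\theta|_1$ each of mass $>V\delta$ whence $|\pi|\le V/(V\delta)+1=C\delta^{-1}$, which is (c), and (d) is then immediate from $|\pi|\le C/\delta$ plus disjointness inside $[n]$.

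The main obstacle is obtaining the \emph{sharp} count (c), namely $C_r\delta^{-1/r}$ with no stray logarithm, simultaneously with the scale-resolved bound (d): property (d) wants $\pi$ organized by length (favoring bisection), while a logarithm-free (c) in the presence of clustered $r$-th differences wants $\pi$ organized by variation (favoring the greedy scan). The resolution is exactly the $r\ge2$ versus $r=1$ dichotomy above — for $r\ge2$ the weight $|I|^{r-1}$ makes long intervals expensive enough that the two bounds in (d) cross precisely at scale $\delta^{-1/r}$ and the resulting series converges, whereas for $r=1$ the greedy scan settles (c) directly and (d) comes for free. The remaining points — the exact constant in the discrete Whitney inequality and the depth-to-length bookkeeping — are routine.
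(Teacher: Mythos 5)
Your argument is correct, and it departs from the paper's in two places worth noting. For the Whitney-type inequality underlying~(b), the paper's Lemma~\ref{lem:approxpoly} goes through the Moore--Penrose pseudoinverse of $D^{(r)}$ and the falling-factorial basis, together with the Legendre bounds of Lemmas~\ref{lem:poly1}--\ref{lem:poly2}; your discrete Newton/Taylor decomposition $\theta_I = q + \rho$ with the explicit $\ell_\infty\to\ell_\infty$ bound on $P^{(r-1,m)}$ is more elementary, reaches the same conclusion, and still only needs Lemmas~\ref{lem:poly1}--\ref{lem:poly2}. The reduction of (b) to (a) via $\TV^{(r)}(\theta_I)\le \TV^{(r)}(\theta_{J_i})$ for $I\subseteq J_i$ matches the paper's.

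The bigger difference is your treatment of $r=1$. The paper applies midpoint bisection for every $r\ge 1$ and asserts $k = 1 + \sum_\ell n_\ell \le C_r\sum_\ell\min\{2^{-\ell(r-1)}/\delta,\,2^\ell\}\le C_r\delta^{-1/r}$; but for $r=1$ the summand is $\min\{1/\delta,\,2^\ell\}$, which is constant past the crossover, so the sum over the $O(\log n)$ attainable levels is $O(\log n/\delta)$, not $O(1/\delta)$. This is not just slack in the proof: for $\theta=(0,V,V,\dots,V)$ the jump survives every bisection until the pair $\{1,2\}$ is split, leaving $\Theta(\log n)$ leaves even when $1/\delta = O(1)$. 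So (c) genuinely fails for the dyadic construction at $r=1$, and your greedy left-to-right scan --- disjoint chunks of $|D^{(1)}\theta|_1$ each of mass $>V\delta$ giving $k\le 1/\delta+1$, with (d) following from $|\pi|\le C/\delta$ and disjointness --- is the right repair, since (a) and hence (b) still hold by construction. The $r\ge 2$ branch of your argument is essentially the paper's, and the depth-to-length-scale counting for (c) and (d) is the same.
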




\begin{remark}
The proof uses a recursive partitioning scheme proposed in~\cite{chatterjee2021adaptive}; see Proposition $8.9$ therein, which further can be thought of as a discrete version of a classical analogous result for functions defined on the continuum in~\cite{BirmanSolomjak67}.	 
\end{remark}

\begin{proof}[Proof of Proposition~\ref{prop:piecewise}]
We first need a lemma quantifying the error when approximating an arbitrary vector $\theta$ by its polynomial projection.
\begin{lemma}{\label{lem:approxpoly}}
Fix any integer $r \geq 0.$ For any $n \geq 1$ and for any $\theta \in \R^n$ we have
\begin{equation}
|\theta - P^{(n,r)} \theta|_{\infty} \leq C_r \TV^{(r + 1)}(\theta).
\end{equation}
\end{lemma}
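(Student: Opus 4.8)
The plan is to split the bound into three largely independent pieces: an inverse (Nikolskii-type) inequality on the finite-dimensional subspace $\mathcal{P}^{(r)}_n$, a comparison between the $\ell_\infty$-distance of $\theta$ to $\mathcal{P}^{(r)}_n$ and the error of the $\ell_2$-projection $P^{(n,r)}\theta$, and finally the explicit construction of one good polynomial approximant. If $n \leq r+1$ then $\mathcal{P}^{(r)}_n = \R^n$, $P^{(n,r)}$ is the identity, and both sides vanish, so we may assume $n \geq r+2$. The first substantive step is to show that every $q \in \mathcal{P}^{(r)}_n$ obeys $|q|_\infty \leq (C_r/\sqrt{n})\,|q|_2$: since $q$ is fixed by the orthogonal projection $P^{(n,r)}$, we have $q_i = \sum_k P^{(n,r)}_{ik}q_k$, so by Cauchy--Schwarz $|q_i|^2 \leq \big(\sum_k (P^{(n,r)}_{ik})^2\big)|q|_2^2$, and $\sum_k (P^{(n,r)}_{ik})^2 = P^{(n,r)}_{ii}$ by symmetry and idempotence, which is at most $C_r/n$ by Proposition~\ref{prop:poly}.

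Next, for an arbitrary $p \in \mathcal{P}^{(r)}_n$ I would write $\theta - P^{(n,r)}\theta = (\theta - p) + (p - P^{(n,r)}\theta)$. The second summand lies in $\mathcal{P}^{(r)}_n$, so the inverse inequality above bounds its sup-norm by $(C_r/\sqrt{n})\,|p - P^{(n,r)}\theta|_2$, and since $P^{(n,r)}\theta$ is the $\ell_2$-nearest element of $\mathcal{P}^{(r)}_n$ to $\theta$, the triangle inequality gives $|p - P^{(n,r)}\theta|_2 \leq |p-\theta|_2 + |\theta - P^{(n,r)}\theta|_2 \leq 2|p-\theta|_2 \leq 2\sqrt{n}\,|p-\theta|_\infty$. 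Hence $|\theta - P^{(n,r)}\theta|_\infty \leq (1 + 2C_r)\,|\theta - p|_\infty$ for every $p \in \mathcal{P}^{(r)}_n$, so it suffices to exhibit one $p$ with $|\theta - p|_\infty \leq C_r\,\TV^{(r+1)}(\theta)$.

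For that last step I would prove by induction on $r$ that $\mathrm{dist}_\infty(\phi, \mathcal{P}^{(r)}_m) \leq m^r\,|D^{(r+1)}\phi|_1 = \TV^{(r+1)}(\phi)$ for every $m$ and every $\phi \in \R^m$. The base case $r=0$ uses $p = \phi_1\mathbf{1}$ and telescoping: $|\phi_i - \phi_1| = \big|\sum_{j<i}(D\phi)_j\big| \leq |D\phi|_1$. For the inductive step, put $\psi = D\phi \in \R^{m-1}$, so $D^{(r)}\psi = D^{(r+1)}\phi$; the inductive hypothesis gives $q \in \mathcal{P}^{(r-1)}_{m-1}$ with $|\psi - q|_\infty \leq (m-1)^{r-1}|D^{(r+1)}\phi|_1$, and then the discrete antiderivative $p$ of $q$ (normalized by $p_1 = \phi_1$) satisfies $D^{(r+1)}p = D^{(r)}q = 0$, hence $p \in \mathcal{P}^{(r)}_m$, while $\phi_i - p_i = \sum_{j<i}(\psi_j - q_j)$ has modulus at most $(m-1)^r|D^{(r+1)}\phi|_1 \leq m^r|D^{(r+1)}\phi|_1$. (Equivalently, one can use the closed-form discrete Taylor expansion $\phi_i = p_\star(i) + \sum_j \binom{i-j-1}{r}(D^{(r+1)}\phi)_j$ with $p_\star$ the degree-$r$ interpolant of $\phi$ at $1,\dots,r+1$, together with the crude bound $\binom{i-j-1}{r} \leq m^r$.) Combining the three steps yields $|\theta - P^{(n,r)}\theta|_\infty \leq (1 + 2C_r)\,\TV^{(r+1)}(\theta)$, which is the lemma after renaming the constant.

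I expect the only real friction to be the bookkeeping in the inductive step — verifying that the discrete antiderivative of a degree-$(r-1)$ discrete polynomial vector is genuinely an element of $\mathcal{P}^{(r)}_m$, and tracking the $m$ versus $m-1$ normalizations so that the final constant depends only on $r$ — while the one nontrivial external ingredient, the diagonal bound $P^{(n,r)}_{ii} \leq C_r/n$, is already supplied by Proposition~\ref{prop:poly}.
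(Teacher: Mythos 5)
Your proposal is correct and takes a genuinely different route from the paper's. The paper works through the Moore--Penrose pseudo-inverse: it writes $\theta - P^{(n,r)}\theta = P^\perp\theta = (D^{(r+1)})^+ D^{(r+1)}\theta$, then bounds $\|(D^{(r+1)})^+\|_\infty$ by $C_r n^r$ using an explicit falling-factorial representation of $(D^{(r+1)})^+$ imported from Lemma~13 of Wang, Sharpnack, Smola and Tibshirani (2016), together with H\"older's inequality and the orthogonal-polynomial estimates in Lemmas~\ref{lem:poly1}--\ref{lem:poly2}. You instead split the task into a Nikolskii-type inverse inequality on $\mathcal{P}^{(r)}_n$ (derived cleanly from the diagonal bound $P^{(n,r)}_{ii}\le C_r/n$ of Proposition~\ref{prop:poly} via Cauchy--Schwarz and idempotence), a near-optimality comparison showing $|\theta-P^{(n,r)}\theta|_\infty\le(1+2C_r)\operatorname{dist}_\infty(\theta,\mathcal{P}^{(r)}_n)$, and a hands-on construction of a single good approximant by induction on $r$ using discrete antiderivatives and telescoping. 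Each step is sound: the degenerate case $n\le r+1$ is correctly dispatched; the $\ell_2$-optimality of the projection is used in the right direction; and in the inductive step, $D^{(r)}q=0$ for $q\in\mathcal{P}^{(r-1)}_{m-1}$ does hold (the kernel of $D^{(r)}$ is exactly the degree-$(r-1)$ polynomial sequences, independent of the grid normalization), so the antiderivative $p$ lands in $\ker D^{(r+1)}=\mathcal{P}^{(r)}_m$, and the crude bound $(m-1)^r\le m^r$ matches the normalization $\TV^{(r+1)}(\phi)=m^r|D^{(r+1)}\phi|_1$. What your route buys is self-containedness: it avoids the external pseudo-inverse/falling-factorial machinery entirely and relies only on Proposition~\ref{prop:poly}, which the paper has already established. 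The paper's route is shorter once the pseudo-inverse formula is accepted as a black box, but it adds a dependency that your argument removes.
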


\begin{proof}
Let us denote $P^{(n,r)}$ by $P^{(r)}$ within this proof and let us denote the subspace of discrete $r$th order polynomials on $[n]$ by $S^{(r)}.$

Write the projection matrix onto the orthogonal complement of $S^{(r)}$ (denote by $S^{(r,\perp)}$) by $P^{\perp}.$ We want to bound $|\theta - P^{(r)} \theta|_{\infty} = |P^{\perp} \theta|_{\infty}.$

Note that $S^{(r)}$ is precisely the null space of the matrix $D^{(r + 1)}.$ Therefore, $S^{(r,\perp)}$ becomes the row space of the matrix $D^{r + 1}.$ In case, $D^{(r + 1)}$ was full row rank (which it is not), then by standard least squares theory we could have written $$P^{\perp} \theta = (D^{(r + 1)})^{t} \big(D^{(r + 1)} (D^{(r + 1)})^{t}\big)^{-1} D^{(r + 1)} \theta.$$ 

Since $D^{(r + 1)}$ is not of full row rank we have to modify the above slightly. Using the concept of generalized inverse, the above display still holds with the inverse replaced by a generalized inverse. The main point in all of this is that entries of $P^{\perp} \theta$ can be written as linear combinations of the entries of $D^{(r + 1)} \theta.$ In fact, the above display can be simplified as
$$P^{\perp} \theta = (D^{(r + 1)})^{+} D^{(r + 1)} \theta$$ where $(D^{(r + 1)})^{+}$ is the appropriate matrix from above; also known as the Moore Penrose Inverse of $D^{(r + 1)}.$

We now claim that $|(D^{(r + 1)})^{+}|_{\infty} \leq C_r n^{r}.$ This will finish the proof by using 
$$|P^{\perp} \theta|_{\infty} \leq |(D^{(r + 1)})^{+}|_{\infty} |D^{(r + 1)} \theta|_{1} \leq C_r \TV^{(r + 1)}(\theta).$$

It remains to prove the claim. We will use certain existing representations of $(D^{(r + 1)})^{+}$ for this.

By  Lemma  13 in~\cite{wang2016trend}, we have that $(D^{(r + 1)})^{+} =  \frac{n^{r}}{r!} P^{\perp} H $  where  $H$  consists of  the last $n-r - 1$ columns of the so-called $r$th order falling factorial basis matrix. Further, expressions for the falling factorial basis are given in~\cite{wang2014falling}. We have that for  $i \in \{1,\ldots,n\}$ and  $j \in  \{ 1,\ldots,n-r-1\}$,
\[
H_{i,j}   =  h_j(i/n),
\]
where 
\[
h_j(x) =   \prod_{l=1}^{r-1}   \left(   x -  \frac{j+l}{n} \right)1_{  \left\{ x\geq  \frac{j+r-1}{n} \right\}   }.
\]

Take $e_i$, the $i$th element of the canonical basis in  $R^{n-r - 1}$. Using the expression for $(D^{(r + 1)})^{+}$ we can write
\[
\begin{array}{lll}
\frac{1}{n^{r}} \|e_i^{\top} (D^{(r + 1)})^{+}\|_{\infty}  & \leq&     \| P^{\perp} e_i\|_{1} \| H\|_{\infty}/r!\\ 
& \leq&  \left(  \|e_i \|_{1} +     \| P^{(r)} e_i \|_{1}  \right)    \| H\|_{\infty}/(r-1)!  \\
& \leq& \left[  1  +     \| P^{(r)} e_i \|_{1}  \right]  /(r-1)!  \\
\end{array}
\]
where the  first  inequality follows from H\"{o}lder's inequality, the second from the triangle inequality and the last by the definition of $H$. 

Next let  $v_1,\ldots, v_{r + 1}$ be an  orthonormal basis of  $S^{(r)}$. Then
\[
\displaystyle 	 \| P^{(r)} e_i \|_{1}    \,=\,   \left\|  \sum_{j=1}^{r + 1}    (e_i^{\top} v_j) v_j \right\|_1 \,\leq\, \sum_{j=1}^{r + 1}    \vert   (e_i^{\top} v_j)   \vert \|v_j\|_1  \,\leq \,  \sum_{j=1}^{r + 1}    \|  v_j  \|_{\infty} \|v_j\|_1 \,\leq\,  \sum_{j=1}^{r + 1}    \|  v_j  \|_{\infty} n^{1/2}.
\]
Now, Lemmas~\ref{lem:poly1},~\ref{lem:poly2}  tell us that $\|  v_j  \|_{\infty} \leq \frac{C_r}{\sqrt{n}}$ for all $j \in [r + 1].$

All in all, the above arguments finally imply our claim
\begin{equation}
\label{eqn:pseudo_inv}
\|(D^{(r + 1)})^{+}\|_{\infty} \leq C_r n^r.
\end{equation}
\end{proof}

We are now ready to proceed with the proof of Proposition~\ref{prop:piecewise}.
For the sake of clean exposition, we assume $n$ is a power of $2.$ The reader can check that the proof holds for arbitrary $n$ as well (by adopting a convention for splitting an interval by half). For an  interval $I \subseteq [n]$, let us define $$\mathcal{M}(I) = \TV^{(r)}(\theta_{I}) = |I|^{r - 1} |D^{(r)} \theta_{I}|_1$$ where $|I|$ is the cardinality of $I$ and $\theta_{I}$ is the vector $\theta$ restricted to the indices in $I.$ Let us now perform recursive dyadic partitioning of $[n]$ according to the following rule. Starting with the root vertex $I = [n]$ we check whether $\mathcal{M}(I) \leq V \delta.$ If so, we stop and the root becomes a leaf. If not, divide the root $I$ into two equal nodes or intervals $I_1 = [n/2]$ and $I_2 = [n/2 + 1 : n].$ For $i = 1,2$ we now check whether $\mathcal{M}(I_j) \leq V \delta$ for $j = 1,2.$ If so, then this node becomes a leaf otherwise we keep partitioning. When this scheme halts, we would be left with a Recursive Dyadic Partition $\pi$ of $[n]$ which are constituted by disjoint intervals. Let's say there are $k$ of these intervals denoted by $B_1,\dots,B_{k}.$ 
By construction, we have $\mathcal{M}(B_i) \leq V \delta$ which proves part (a). \newline

One of the $B_1,\dots,B_{k}$ would contain $i.$ We denote this interval by $J_i.$ Let $I$ be any subset of $J_i$ containing $i.$ Since $\TV^{(r)}(\theta_{J_i}) \leq V \delta$ we must have $$\TV^{(r)}(\theta_{I}) \leq V \delta.$$ We can now apply Lemma~\ref{lem:approxpoly} to $\theta_{I}$ to obtain  
$$|\theta_I - P^{(|I|,r - 1)} \theta_{I}|_{\infty} \leq C_r \TV^{(r)}(\theta_{I}) \leq C_r V \delta.$$
Since this bound holds uniformly for all such $I$, we prove part (b).\newline

Let us rewrite $\mathcal{M}(I) = (\frac{|I|}{n})^{r - 1} n^{r - 1} |D^{(r)} \theta_{I}|_1.$ Note that for arbitrary disjoint intervals $B_1,B_2,\dots,B_{k}$ we have by sub-additivity of the $\TV^{(r)}$ functional,
\begin{equation}\label{eq:subadd}
\sum_{j \in [k]} n^{r -1} |D^{(r)} \theta_{B_j}|_1 \leq \TV^{(r)}(\theta) = V.
\end{equation}
The entire process of obtaining our recursive partition of $[n]$ actually happened in several rounds. In the first round, we possibly partitioned the interval $I = [n]$ which has size proportion $|I|/n = 1 = 2^{-0}.$ In the second round, we possibly partitioned intervals having size proportion $2^{-1}$. 
In general, in the $\ell$ th round, we possibly partitioned 
intervals having size proportion $2^{-\ell}$. Let $n_\ell$ be the number of intervals with size proportion $2^{-\ell}$ that 
we divided in round $\ell$. Let us count and give an upper bound on $n_\ell.$ If we indeed partitioned $I$ with size proportion $2^{-\ell}$ then by construction this means 
\begin{equation}
n^{r - 1} |D^{(r)} \theta_{I}|_1 > \frac{V \delta}{2^{-\ell(r - 1)}}.
\end{equation}
Therefore, by sub-additivity as in~\eqref{eq:subadd} we can conclude that the number of such divisions is at most $\frac{2^{-\ell(r - 1)}}{\delta}.$ On the other hand, note that clearly the number of such divisions is bounded above by $2^{\ell}.$ Thus we conclude
\begin{equation*}
n_\ell \leq \min\{\frac{2^{-\ell(r - 1)}}{\delta},2^\ell\}.
\end{equation*}
This proves part (d).

Therefore, we can assert that 
\begin{equation}
k = 1 + \sum_{l = 0}^{\infty} n_\ell \leq  \sum_{\ell = 0}^{\infty} \min\{\frac{2^{-\ell(r - 1)}}{\delta},2^\ell\} \leq C_r \delta^{-1/r}.
\end{equation}
In the above, we set $n_\ell = 0$ for $\ell$ exceeding the maximum number of rounds of division possible. The last summation can be easily performed as there exists a nonnegative integer $2^{\ell^*} = O( \delta^{-1/r})$ such that 
\begin{equation*}
\min\{\frac{2^{-\ell(r - 1)}}{\delta},2^\ell\} = 
\begin{cases}
2^\ell, & \text{for} \:\:\ell < \ell^* \\
\frac{2^{-\ell(r - 1)}}{\delta} & \text{for} \:\:\ell \geq \ell^*
\end{cases}
\end{equation*}
This proves part (c) and finishes the proof. 
\end{proof}

	\section{Proof of Theorem~\ref{thm:mainada}}\label{sec:localrateproofs}

We first bound the bias term for Hölder smooth functions. 

\begin{lemma}[Local Bias Control]\label{lem:bias}
Suppose $\theta^* \in C^{r_0, \alpha_0}(J)$ with Hölder constant bounded by $L$, for an interval $J \subseteq [n]$ containing $i.$ Then we have the following bound on the bias:
\begin{equation*}
    \max\{|Bias^{(r)}_{+}(i,J,\theta^*)|,|Bias^{(r)}_{-}(i,J,\theta^*)|\} \leq C_r L \big(\frac{|J|}{n}\big)^{\beta}
\end{equation*}
where $\beta = r_0 + \alpha_0.$
\end{lemma}


\begin{proof}
We write the proof for $r_0 = r$; the entire argument goes through verbatim for any $r_0 < r$ as well. Throughout this proof, we will go back and forth between discrete intervals and real intervals (denoted in bold). For any discrete interval $I = [l_1:l_2] \subseteq [n]$, the corresponding real interval is $\mathbf{I} = [\frac{l_1}{n},\frac{l_2}{n}]$ and vice versa.

We first need some preparatory results. Let $J$ be the discrete interval $[i:j] \subseteq [n].$ For any discrete sub interval $I = [u:v] \subseteq J$ we can define the sequence $Tayl(\theta^*,I,r) \in \R^{|I|}$ which is basically the $r$th order Taylor expansion of $\theta^*$ about the initial point in $I.$ To be precise, recall that  $\theta^*_i = f^*(\frac{i}{n})$ are evaluations of some underlying function $f: [0,1] \rightarrow \mathbb{R}$ such that $f \in C^{r, \alpha_0}(\mathbf J)$ for the (real) interval $\mathbf J = [\frac{i}{n},\frac{j}{n}] \subseteq [0,1].$ For the (real) interval $\mathbf{I} = [\frac{u}{n},\frac{v}{n}] \coloneqq [a,b]$ we denote its Taylor Series approximation $f_{Tayl,\mathbf{I}} : \mathbf{I} \rightarrow \mathbb{R}$ as follows:
$$f_{Tayl,\mathbf{I}}(x) = \sum_{l = 0}^{r} \frac{f^{(l)}(a)}{l!} (x - a)^{l}.$$

We can now define $Tayl(\theta^*,I,r) \in \R^{|I|}$ to be the evaluations of $f_{Tayl,\mathbf{I}}$ on the discrete grid within $\mathbf{I}.$

We observe that since $f \in C^{r,\alpha_0}(\mathbf{I})$, by Taylor's theorem, $f$ can be written as 

$$f(x) = \sum_{l = 0}^{r - 1} \frac{f^{(l)}(a)}{l!} (x - a)^{l} + \frac{f^{(r)}(\xi)}{r!} (x - a)^{r}$$ for some $\xi \in [a,x].$

Therefore, for any $x \in \mathbf{I}$, we have

$$|f(x) - f_{Tayl,\mathbf{I}}(x)| \leq C_r |f^{(r)}(\xi) - f^{(r)}(a)| |b - a|^{r} \leq C_r |b - a|^{r + \alpha_0} = C_r |b - a|^{\beta}.$$

When we apply this argument to $\theta^*$ inside the discrete interval $I$, we obtain
\begin{equation}\label{eq:taylor}
    [\theta^*_{I} - Tayl(\theta^*,I,r)]_{\infty} \leq C_r L \big(\frac{|I|}{n}\big)^{\beta} \leq C_r L \big(\frac{|J|}{n}\big)^{\beta}. 
\end{equation}

Now for the discrete interval $I$, consider the matrix $[I_{|I|} - P^{(|I|,r)}]$ where $I_{|I|}$ is the $|I| \times |I|$ identity matrix. We denote its $\ell_{\infty,1}$ matrix norm

$$[I_{|I|} - P^{(|I|,r)}]_{row,\ell_1}  = \max_{1 \leq i \leq |I|} \sum_{1 \leq j \leq |I|} |[I_{|I|} - P^{(|I|,r)}]_{ij}|.$$

We now claim that there exists a constant $C_r$ only depending on $r$ such that 

\begin{equation}\label{eq:matnormbd}
    [I_{|I|} - P^{(|I|,r)}]_{row,\ell_1} \leq C_r.
\end{equation}

We can show this by arguing as follows:
\begin{equation*}
    \sum_{1 \leq j \leq |I|} |[P^{(|I|,r)}]_{ij}| \leq \big(\sum_{1 \leq j \leq |I|} [P^{(|I|,r)}]^2_{ij}\big)^{1/2} |I|^{1/2} = \sqrt{P^{(|I|,r)}_{ii}}\,|I|^{1/2} \leq C_r
\end{equation*}
where in the first inequality we used Cauchy--Schwarz, in the equality we used the fact that $P^{(|I|,r)}$ is symmetric and idempotent and in the last inequality we used Proposition~\ref{prop:poly}.

Now note that by triangle inequality for norms,
$$[I_{|I|} - P^{(|I|,r)}]_{row,\ell_1} \leq 1 + [P^{(|I|,r)}]_{row,\ell_1}$$ which proves~\eqref{eq:matnormbd}.

We are now ready to give the proof. 

Take any subinterval $I \subseteq J$ such that $i \in I.$ 
We can write
\begin{align*}
    \big[(P^{(|I|,r)} \theta^*_{I})_i - \theta^*_i\big] &= - \big([I_{|I|} - P^{(|I|,r)}] \theta^*_{I}\big)_i = - \big([I_{|I|} - P^{(|I|,r)}] [\theta^*_{I} - Tayl(\theta^*,I,r)]\big)_i \\& \leq \big([I_{|I|} - P^{(|I|,r)}] [\theta^*_{I} - Tayl(\theta^*,I,r)]\big)_{\infty} \\& \leq [I_{|I|} - P^{(|I|,r)}]_{row,\ell_1} [\theta^*_{I} - Tayl(\theta^*,I,r)]_{\infty} \\& \leq C_r L \big(\frac{|J|}{n}\big)^{r + \alpha_0}.
\end{align*}

In the above, in the second equality we used the fact that $Tayl(\theta^*,I,r)$ is a discrete $r$th degree polynomial, in the second inequality we used Hölder's inequality and in the last inequality we used both~\eqref{eq:taylor} and~\eqref{eq:matnormbd}. This finishes the proof. 
\end{proof}

We are now ready to give the proof. 

\begin{proof}[Proof of Theorem~\ref{thm:mainada}]
We consider the DSMTF estimator here. The same proof works for the full MTF as well. Hence $\mathcal{I}_{i_0}$ consists of symmetric intervals of all scales centred at $i = i_0.$
Combining~\eqref{eq:thmbound} and Lemma~\ref{lem:bias} we can write
\begin{align}\label{eq:biasvar}
    \hat{\theta}^{(r,\lambda)}_{i_0} - \theta^*_{i_0} \leq \min_{J \in \mathcal{I}_{i_0}: J \subseteq [i_0 \pm s_0]} \big(C_r L \big(\frac{|J|}{n}\big)^{\beta} +  \frac{C_r \tilde{\sigma}}
		{\sqrt{|J|}} + \frac{C_r \tilde{\sigma}^2}{\lambda} + \frac{2\lambda}{|J|}\big).
\end{align}

Now we will choose $J$ so that the sum of the first two terms inside the min in~\eqref{eq:biasvar} are minimized. For this, we can choose among $\{J \in \mathcal{I}_{i_0}: J \subseteq [i_0 \pm s_0]\}$ such that $$|J| = B_n = \lfloor \min\{\tilde{\sigma}^{2/(2\beta + 1)} L^{-2/(2\beta + 1)} n^{2\beta/(2\beta + 1)},l_0\} \rfloor.$$
In the above $l_0 = |[i_0 \pm s_0]|.$

With this choice the sum of the first two terms inside the min in~\eqref{eq:biasvar} simply becomes 
$$R_n = \max\{\tilde{\sigma}^{2\beta/(2\beta + 1)} L^{1/(2\beta + 1)} n^{-\beta/(2\beta + 1)}, \tilde{\sigma} l_0^{-1/2}\}.$$ 

Now note that with this choice of $J$, the sum of the last two terms (up to a constant factor) inside the min in~\eqref{eq:biasvar} equals 
$$g(\lambda) = \frac{\tilde{\sigma}^2}{\lambda} + \frac{\lambda}{B_n}\big.$$ It is easy to see that 
this is minimized when $\lambda^* = \tilde{\sigma} \sqrt{B_n}.$ Notably, $g(\lambda^*)$ is of the same order as the optimized first-two-terms contribution, and hence is of the same order as $R_n$ (up to constants). This finishes the proof. 
\end{proof}

   \begin{lemma}[Local Bias Lower Bound]\label{lem:biastight}
Fix an integer $0 \leq r \leq 10$ and let $\beta = r+1$.
There exists a constant $C_r>0$ depending only on $r$ such that the following holds.

For any $n$, any location $i\in[n]$, and any interval
$J=[i\pm m]\subseteq[n]$ with $m\to\infty$ and $m/n\to 0$,
there exists a function $f\in C^{r,1}$ with
$\theta^*_l = f(l/n)$ for $l\in[n]$ such that the bias of degree-$r$
local polynomial regression satisfies
\[
\max\bigl\{
|Bias^{(r)}_{+}(i,J,\theta^*)|,
|Bias^{(r)}_{-}(i,J,\theta^*)|
\bigr\}
\;\ge\;
C_r \Bigl(\frac{|J|}{n}\Bigr)^{\beta}.
\]
\end{lemma}

\begin{proof}

Define the effective bandwidth
\[
h = \frac{|J|}{n} = \frac{2m+1}{n}.
\]

Let $x_0 = i/n$ and define the design points
$x_j = (i+j)/n$ for $j\in[m]$ and
$x_{m+j}=(i-j)/n$ for $j\in[1:m]$.
Define the rescaled covariates
\[
u_j = \frac{x_j-x_0}{h}, \qquad j\in[0:2m],
\]
so that $u_0=0$ and $u_j\in[-1/2,1/2]$.

Define the design matrix $\mathbf X\in\R^{(2m+1)\times(r+1)}$ by
\[
\mathbf X
=
\begin{pmatrix}
1 & u_0 & \cdots & u_0^r \\
1 & u_1 & \cdots & u_1^r \\
\vdots & \vdots & & \vdots \\
1 & u_{2m} & \cdots & u_{2m}^r
\end{pmatrix}.
\]

Let $\tilde y_j$ denote the entry of $y$ corresponding to the design point $x_j.$
The local polynomial estimator of degree $r$ (at $x_0$) solves
\[
\hat{\boldsymbol\beta}
=
\arg\min_{\boldsymbol\beta\in\R^{r+1}}
\sum_{j=0}^{2m}
\left(
\tilde y_j - \beta_0 - \beta_1 u_j - \cdots - \beta_r u_j^r
\right)^2,
\]
and the fitted value at $x_0$ is
\[
\hat f^{(r)}(x_0)
=
\mathbf e_1^\top \hat{\boldsymbol\beta},
\qquad
\mathbf e_1=(1,0,\dots,0)^\top.
\]

By standard least squares theory, 
\[
\hat{\boldsymbol\beta}
=
(\mathbf X^\top \mathbf X)^{-1}\mathbf X^\top \tilde{\mathbf y},
\qquad
\E \hat{\boldsymbol\beta}
=
(\mathbf X^\top \mathbf X)^{-1}\mathbf X^\top \mathbf v,
\]
where $\mathbf v=(f(x_0),\dots,f(x_{2m}))^\top$.
Therefore,
\[
\E \hat f^{(r)}(x_0)
=
\mathbf e_1^\top (\mathbf X^\top \mathbf X)^{-1}\mathbf X^\top \mathbf v.
\]

\medskip
\noindent

Now we choose a \textit{worst case function}.
Define
\[
f(x) = |x-x_0|^{r+1}.
\]
Then $f\in C^{r,1}$ and $f(x_0)=0$.
Moreover, for each $j\in[0:2m]$,
\[
f(x_j) = h^{r+1}|u_j|^{r+1}.
\]
Thus we may write
\[
\mathbf v = h^{r+1}\mathbf b,
\qquad
\mathbf b_j = |u_j|^{r+1}, \;\; j\in[0:2m].
\]

Substituting this yields
\[
\E \hat f^{(r)}(x_0)
=
h^{r+1}
\mathbf e_1^\top (\mathbf X^\top \mathbf X)^{-1}\mathbf X^\top \mathbf b.
\]
Since $f(x_0)=0$, the bias satisfies
\[
Bias
=
\E \hat f^{(r)}(x_0) - f(x_0)
=
h^{r+1}
\mathbf e_1^\top (\mathbf X^\top \mathbf X)^{-1}\mathbf X^\top \mathbf b.
\]

\medskip
\noindent

Now we will show that the term $e_1^\top (\mathbf X^\top \mathbf X)^{-1}\mathbf X^\top \mathbf b$ converges to some non zero constant.  
Consider an element of the normalized matrix for $k,\ell\in\{0,\dots,r\}$,
\[
\frac{1}{m}(\mathbf X^\top \mathbf X)_{k\ell}
=
\frac{1}{m}\sum_{j=0}^{2m} u_j^{k+\ell} = \frac{1}{m}\sum_{j=-m}^{m} (\frac{j}{2m + 1})^{k+\ell}
\;\longrightarrow\;
\E U^{k+\ell},
\]
and also an element of the normalized vector
\[
\frac{1}{m}(\mathbf X^\top \mathbf b)_k
=
\frac{1}{m}\sum_{j=0}^{2m} u_j^k |u_j|^{r+1}
\;\longrightarrow\;
\E U^k |U|^{r+1},
\]
where $U\sim\mathrm{Unif}[-1/2,1/2]$.
Let $M$ denote the $(r+1)\times(r+1)$ moment matrix
with $M[i,j]=\E U^{i+j}$.

By continuous mapping theorem,
\[
\mathbf e_1^\top (\mathbf X^\top \mathbf X)^{-1}\mathbf X^\top \mathbf b
\;\longrightarrow\;
\mathbf e_1^\top M^{-1} d,
\]
where $d_k=\E U^k|U|^{r+1}$.

Thus the limiting bias constant appearing is
$\mathbf e_1^\top M^{-1} d$. For $r=0,\dots,10$, this limiting constant can be computed explicitly by symbolic
calculation. The resulting values are listed in the table below and are all nonzero.

\[
\begin{array}{c|c}
r & \mathbf e_1^\top M^{-1} d \\ \hline
0  & \dfrac{1}{4} \\[6pt]
1  & \dfrac{1}{12} \\[6pt]
2  & -\dfrac{1}{128} \\[6pt]
3  & -\dfrac{3}{560} \\[6pt]
4  & \dfrac{3}{8192} \\[6pt]
5  & \dfrac{5}{14784} \\[6pt]
6  & -\dfrac{5}{262144} \\[6pt]
7  & -\dfrac{7}{329472} \\[6pt]
8  & \dfrac{35}{33554432} \\[6pt]
9  & \dfrac{63}{47297536} \\[6pt]
10 & -\dfrac{63}{1073741824}
\end{array}
\]

\medskip
\noindent

Consequently, for all sufficiently large $m$,
$$|Bias| \geq C_r h^{r + 1} \geq C_r \Bigl(\frac{|J|}{n}\Bigr)^{\beta}$$
for some constant $C_r>0$ depending only on $r$.

\medskip
\noindent
\textbf{Conjecture.}
The explicit computations suggest that $\mathbf e_1^\top M^{-1}d\neq 0$ for all
$r\ge 0$. If true, the above argument would extend verbatim to all degrees.
Since this question is tangential to the main focus of the paper, we leave its
full resolution for future work.

\end{proof}

	\section{Proof of Theorem~\ref{thm:slow} (Slow Rate)}\label{sec:slow}

\begin{proof}
For a $\delta > 0$ to be chosen later, we invoke Proposition~\ref{prop:piecewise} to obtain an interval partition $\pi_{\delta} \coloneqq \pi$ such that 
\newline
\newline
a) $TV^{(r)}(\theta^*_{I}) \leq V \delta \:\:\:\: \forall I \in \pi$, \newline
b)\:For any $i \in [n]$, we have $$
\max\{|Bias^{(r - 1)}_{+}(i,J_i,\theta^*)|,|Bias^{(r - 1)}_{-}(i,J_i,\theta^*)|\} \leq C_r V \delta$$ where $J_i$ is the interval within the partition $\pi$
which contains $i$,\newline
c) $|\pi| \leq C_r \delta^{-1/r}$
\newline
d) For any integer $u \geq 0$,

\begin{equation*}
\bigl|\{I \in \pi: c_1 \tfrac{n}{2^u} \le |I| \le c_2 \tfrac{n}{2^u}\}\bigr|
\le C_r \min\{\tfrac{2^{-u(r-1)}}{\delta}, 2^u\}
\end{equation*}
where $c_1,c_2$ are absolute constants.

Now, let us bound the positive part of $\hat{\theta_i} - \theta^*_i.$ The negative part can be bounded similarly. The bound as given by Theorem~\ref{thm:maingeneral} is that with high probability,
\begin{align*}
\hat{\theta}^{(r - 1,\lambda)}_i - \theta^*_i &\leq \min_{J \in \mathcal{I}: i \in J} \left(Bias_{+}^{(r - 1)}(i,J,\theta^*) + SD^{(r - 1)}(i,J,\lambda)\right) \\&\leq Bias_{+}^{(r - 1)}(i,J_i,\theta^*) + SD^{(r - 1)}(i,J_i,\lambda) \\& \leq C_r V \delta + \frac{C_r \sigma \sqrt{\log n}}
{\sqrt{Dist(i,\partial J_i)}} + \frac{C_r \sigma^2 \log n}{\lambda} + \frac{2\lambda}{|J_i|}.
\end{align*}


Squaring and adding over all indices in $i$, we get 

\begin{equation}\label{eq:bdslow1}
\sum_{i = 1}^{n} (\hat{\theta}^{(r - 1,\lambda)}_i - \theta^*_i)^2_{+} \lesssim n V^2 \delta^2 + \sigma^2 \log n \underbrace{\sum_{i = 1}^{n} \frac{1}{Dist(i,\partial J_i)}}_{T_1} + \frac{n \sigma^4 \log^2 n}{\lambda^2} + \lambda^2 \underbrace{\sum_{i = 1}^{n} \frac{1}{|J_i|^2}}_{T_2}
\end{equation}

where $\lesssim$ notation means up to a constant factor $C_r$ which only depends on $r.$ We will use this notation throughout this proof.

We will now bound $T_1$ and $T_2$ separately. Let $\pi$ consist of intervals $(B_1,\dots,B_k)$ where $k = |\pi| \lesssim \delta^{-1/r}.$ Let us also denote the cardinalities of these intervals by $n_1,\dots,n_k.$

We can write
\begin{align*}
T_1 = \sum_{i = 1}^{n} \frac{1}{Dist(i,\partial J_i)} &= \sum_{l = 1}^{k} \sum_{i \in B_l} \frac{1}{Dist(i,\partial B_l)} = \sum_{l = 1}^{k} 2 \big(1 + \frac{1}{2} + \dots + \frac{1}{n_l/2}\big) \\& \lesssim \sum_{l = 1}^{k} \log n_l = k \big(\frac{1}{k} \sum_{l = 1}^{k} \log n_l\big) \leq k \log \frac{n}{k} \leq k \log n \lesssim \delta^{-1/r} \log n
\end{align*}
where in the third last inequality we used Jensen's inequality.

We can also write
\begin{align*}
T_2 = \sum_{i = 1}^{n} \frac{1}{|J_i|^2} = \sum_{l = 1}^{k} \sum_{i \in B_l} \frac{1}{|B_l|^2} = \sum_{l = 1}^{k}\frac{1}{n_l}. 
\end{align*}

At this point, for the sake of simpler exposition, we assume $n$ is a power of $2$ although the argument works for any $n.$ Then, by the nature of our recursive dyadic partioning scheme, the cardinalities $n_l$ are of the form $\frac{n}{2^{u}}$ for some integer $u \geq 0.$ Continuing from the last display, we can write
\begin{align*}
\sum_{l = 1}^{k}\frac{1}{n_l} = \sum_{l = 1}^{k} \sum_{u = 0}^{\infty} \frac{1}{n_l} \mathrm{1}(n_l = \frac{n}{2^u}) &= \sum_{u = 0}^{\infty} \frac{2^{u}}{n} \sum_{l = 1}^{k} \mathrm{1}(n_l = \frac{n}{2^u}) \lesssim \sum_{u = 0}^{\infty} \frac{2^{u}}{n} \min\{\frac{2^{-u(r - 1)}}{\delta},2^{u}\}
\\& = \frac{1}{n} \sum_{u = 0}^{\infty} \min\{\frac{2^{-u(r - 2)}}{\delta},2^{2u}\} \lesssim \frac{\delta^{-2/r}}{n}.
\end{align*}


The last step above follows from the fact that there exists a nonnegative integer $u^* = O(\log(1/\delta))$ such that 
\begin{equation*}
\min\{\frac{2^{-u(r - 2)}}{\delta},2^{2u}\} = 
\begin{cases}
2^{2u}, & \text{for} \:\:u < u^* \\
\frac{2^{-u(r - 2)}}{\delta} & \text{for} \:\:u \geq u^*.
\end{cases}
\end{equation*}

Therefore, we obtain
\begin{equation*}
T_2 \lesssim \frac{\delta^{-2/r}}{n}.
\end{equation*}

The two bounds on $T_1$ and $T_2$ respectively, along with~\eqref{eq:bdslow1} lets us obtain
\begin{equation}\label{eq:bdslow2}
\sum_{i = 1}^{n} (\hat{\theta}^{(r - 1,\lambda)}_i - \theta^*_i)^2_{+} \lesssim n V^2 \delta^2 + \sigma^2 \delta^{-1/r} (\log n)^2  + \frac{n \sigma^4 \log^2 n}{\lambda^2} + \frac{\lambda^2 \delta^{-2/r}}{n}.
\end{equation}

Now the above bound holds for any $\delta > 0$, hence we can optimize the bound over $\delta.$ Note that the first two terms do not involve $\lambda.$ Let us minimize the sum of the first two terms; we can do this by setting $$\delta \coloneqq \delta^* = C_r \big(\frac{\sigma^2 (\log n)^2}{n V^2}\big)^{r/(2r + 1)}.$$

Then the sum of the first two terms scales like 
\begin{equation}\label{eq:bdslow3}
(n V^2)^{1/(2r + 1)} (\sigma^2 (\log n)^2)^{2r/(2r + 1)}
\end{equation}

We will now handle the sum of the last two terms in the bound in~\eqref{eq:bdslow2}, these are the terms which involve $\lambda$ and will inform us of a good choice of $\lambda.$
We will show that with an optimal choice of $\lambda$, this sum of the last two terms is essentially of the same order as the expression in~\eqref{eq:bdslow3}.

We will plug in the optimized choice $\delta^*$ here. Let us denote the effective number of pieces 
$$k^* = (\delta^*)^{-1/r}.$$

Then the sum of the last two terms in~\eqref{eq:bdslow2} can be written as 
$$\frac{n \sigma^4 \log^2 n}{\lambda^2} + \frac{\lambda^2 (k^*)^2}{n}.$$

The above suggests that we minimize the sum of the above two terms by equating them. This will mean that we need to choose $$\lambda = C_r \big(\frac{n^2}{(k^*)^2} \sigma^4 (\log n)^2\big)^{1/4} = C_r n^{r/(2r + 1)} V^{-1/(2r + 1)} \sigma^{1 + 1/(2r + 1)} (\log n)^{1/2 + 1/(2r + 1)}.$$

By setting this choice of $\lambda$, the sum of the two terms involving $\lambda$
scale like
$$k^* \sigma^2 \log n = (\delta^*)^{-1/r} \sigma^2 \log n.$$

This is dominated by the sum of the first two terms as can be seen from the second term in~\eqref{eq:bdslow2}. This finishes the proof. 
\end{proof}

\section{Proof of Theorem~\ref{thm:fast} (Fast Rate)}\label{sec:fastrate}

\begin{proof}
We are given that there exists an interval partition $\pi^*$ of $[n]$ with intervals $I_1,I_2,\dots,I_k$ such that $\theta^*_{I_j}$ is a polynomial of degree $r \geq 0$ for each $j = 1,\dots,k.$ Since $I_1,I_2,\dots,I_k$ forms a partition of $[n],$ for any index $i \in [n]$, one of these intervals contains $i.$ Let us denote this interval by $J_i.$

Let us bound the positive part of $\hat{\theta}^{(r,\lambda)}_i - \theta^*_i.$ The negative part can be bounded similarly. The bound as given by Theorem~\ref{thm:maingeneral} is that with high probability,
\begin{align*}
\hat{\theta}^{(r,\lambda)}_i - \theta^*_i &\leq \min_{J \in \mathcal{I}: i \in J} \left(Bias_{+}^{(r)}(i,J,\theta^*) + SD^{(r)}(i,J,\lambda)\right) \leq Bias_{+}^{(r)}(i,J_i,\theta^*) + SD^{(r)}(i,J_i,\lambda) \\& \leq \frac{C_r \sigma \sqrt{\log n}}
{\sqrt{Dist(i,\partial J_i)}} + \frac{C_r \sigma^2 \log n}{\lambda} + \frac{2\lambda}{|J_i|}.
\end{align*}
because by definition, $Bias_{+}^{(r)}(i,J_i,\theta^*) = 0.$

Squaring and adding over all indices in $i$, we get 

\begin{equation}\label{eq:bdfast1}
\sum_{i = 1}^{n} (\hat{\theta}^{(r,\lambda)}_i - \theta^*_i)^2_{+} \lesssim \sigma^2 \log n \underbrace{\sum_{i = 1}^{n} \frac{1}{Dist(i,\partial J_i)}}_{T_1} + \frac{n \sigma^4 \log^2 n}{\lambda^2} + \lambda^2 \underbrace{\sum_{i = 1}^{n} \frac{1}{|J_i|^2}}_{T_2}
\end{equation}

As in the proof of Theorem~\ref{thm:slow}, we have
$$T_1 \lesssim k \log \frac{n}{k}.$$

As for $T_2$, we have to use the minimum length condition that each of the $|J_i|$ have length at least $c \frac{n}{k}.$ Therefore,
$$T_2 = \sum_{i = 1}^{n} \frac{1}{|J_i|^2} = \sum_{l = 1}^{k} \sum_{i \in I_l} \frac{1}{|I_l|^2} = \sum_{l = 1}^{k}\frac{1}{|I_l|} \lesssim \frac{k^2}{n}.$$

Therefore, we get the bound
\begin{equation}\label{eq:bdfast2}
\sum_{i = 1}^{n} (\hat{\theta}^{(r,\lambda)}_i - \theta^*_i)^2_{+} \lesssim \sigma^2 k \log n \log \frac{n}{k} + \frac{n \sigma^4 \log^2 n}{\lambda^2} + \lambda^2 \frac{k^2}{n}.
\end{equation}

We can now choose $$\lambda = C_r \big(\frac{n^2 \sigma^4 (\log n)^2}{k^2}\big)^{1/4}$$ to obtain the final bound
\begin{equation*}
\sum_{i = 1}^{n} (\hat{\theta}^{(r,\lambda)}_i - \theta^*_i)^2_{+} \lesssim \sigma^2 k \log n \log \frac{n}{k} + \sigma^2 k \log n.
\end{equation*}

To obtain the $\ell_1$ loss bound we again start from 

\begin{align*}
\sum_{i = 1}^{n} (\hat{\theta}^{(r,\lambda)}_i - \theta^*_i)_{+} &\lesssim \sigma \sqrt{\log n} \underbrace{\sum_{i = 1}^{n} \frac{1}{\sqrt{Dist(i,\partial J_i)}}}_{T_1} + \frac{n \sigma^2 \log n}{\lambda} + \lambda \sum_{i = 1}^{n} \frac{1}{|J_i|} 
\\& \lesssim \sigma \sqrt{\log n} \sum_{l = 1}^{k} \big(\frac{1}{\sqrt{1}} + \dots + \frac{1}{\sqrt{|I_l|}}\big) + \frac{n \sigma^2 \log n}{\lambda} + \lambda \sum_{l = 1}^{k} \sum_{i \in I_l} \frac{1}{|J_i|} \\& \lesssim \sigma \sqrt{\log n} \sum_{l = 1}^{k} \sqrt{|I_l|} + \frac{n \sigma^2 \log n}{\lambda} + \lambda k \\& \leq \sigma \sqrt{n k \log n} + \frac{n \sigma^2 \log n}{\lambda} + \lambda k
\end{align*}
where in the last inequality we used Jensen's inequality. Setting 
$$\lambda = \big(\frac{n \sigma^2 \log n}{k}\big)^{1/2}$$ we get the final bound
$$\sum_{i = 1}^{n} (\hat{\theta}^{(r,\lambda)}_i - \theta^*_i)_{+} \lesssim \sigma \sqrt{n k \log n}.$$

\end{proof}

	\end{appendix}


	
	\bibliographystyle{imsart-number} 
	\bibliography{references}       
	
\end{document}